\newtheorem{Th}{Theorem}[section]
\newtheorem{Cor}[Th]{Corollary}
\newtheorem{Lemma}[Th]{Lemma}
\newtheorem{Prop}[Th]{Proposition}
\theoremstyle{definition}
\theoremstyle{remark}
\newtheorem{Remark}{Remark}
\newtheorem{Def}{Definition}
\newcommand{\al}{\alpha}
\newcommand{\tor}{{\Bbb{T}}}
\newcommand{\ce}{\E}
\newcommand{\cb}{{\cal B}}
\newcommand{\bs}{\mathbb{S}}
\newcommand{\Q}{\mathbb{Q}}
\newcommand{\R}{{\mathbb{R}}}
\newcommand{\T}{{\mathbb{T}}}
\newcommand{\C}{{\mathbb{C}}}
\newcommand{\Z}{{\mathbb{Z}}}
\newcommand{\xbm}{(X,{\cal B},\mu)}
\newcommand{\beq}{\begin{equation}}
\newcommand{\eeq}{\end{equation}}
\newcommand{\qnk}{q_{n_{k}}}
\newcommand{\qn}{q_{n}}
\newcommand{\E}{{\cal E}}
\begin{document}

\title{On multiple ergodicity of affine
cocycles \\ over irrational rotations}

\author{Jean-Pierre Conze and Agata Pi\c{e}kniewska}

\maketitle

\begin{abstract}
Let $T_\alpha$ denote the rotation $T_{\alpha}x=x+\alpha$ (mod~$1$)
by an irrational number $\alpha$ on the additive circle $\T=[0,1)$.
Let $\beta_1,..., \beta_{d}$ be $d\geq 1$ parameters in $[0, 1)$.
One of the goals of this paper is to describe the ergodic properties
of the cocycle (taking values in $\R^{d+1}$) generated over
$T_\alpha$ by the vectorial function $\Psi_{d+1}(x):=(\varphi(x),
\varphi(x+\beta_1),..., \varphi(x+\beta_{d})), {\rm \ with \ }
\varphi(x)=\{x\}-\frac12.$

It was already proved in \cite{LeMeNa03} that $\Psi_{2}$ is regular
for $\alpha$ with bounded partial quotients. In the present paper we
show that $\Psi_{2}$ is regular for any irrational $\alpha$. For
higher dimensions, we give sufficient conditions for regularity.
While the case $d=2$ remains unsolved, for $d=3$ we provide examples
of non-regular cocycles $\Psi_{4}$ for certain values of the
parameters $\beta_1,\beta_2,\beta_3$.

We also show that the problem of regularity for the cocycle $\Psi_{d+1}$ reduces to
the regularity of the cocycles of the form $\Phi_{d} =(1_{[0,
\beta_j]} - \beta_j)_{j= 1, ..., d}$ (taking values in $\R^d$). Therefore, a large
part of the paper is devoted to the classification problems of step
functions with values in $\R^{d}$.
\end{abstract}

\tableofcontents \thispagestyle{empty}

\section{Introduction}

Denote by $\T=[0,1)$ the additive circle. Let $\alpha \in (0,1)$ be
an irrational number and $T_\alpha$ stand for the corresponding
circle rotation: $T_\alpha x=x+\alpha$. The meaning of $\alpha$
being fixed, throughout the paper, we will write $T$ instead of
$T_\alpha$ (except for Sections 2.2 and 2.3).

Let $\beta_1,..., \beta_{d}$ be $d\geq 1$ parameters in $[0, 1)$. We
consider the cocycle generated over $T$ by the vectorial function
\begin{eqnarray}\label{pierwsza}
\Psi_{d+1}(x):=(\psi(x), \psi(x+\beta_1),..., \psi(x+\beta_{d})),
{\rm \ with \ } \psi(x)=\{x\}-\frac12. \label{phid}
\end{eqnarray}

This cocycle takes values in $\R^{d+1}$ and one of the goals of this
paper is to describe its ergodic properties. Namely, we are mostly
interested whether or not $\Psi_{d+1}$ is regular (that is,
cohomologous to a ``smaller'' cocycle which is ergodic, see
Section~\ref{prelimin} for the precise meaning of regularity). It is
well known that $\Psi_1:\T\to\R$ is ergodic for each irrational
$\alpha$, but for $d\geq1$ the problem of regularity is unsolved. As
for applications in ergodic theory, or more precisely in the theory
of joinings, an importance of regularity of cocycles of the
form~(\ref{pierwsza}) has been shown in~\cite{LeMeNa03}. Indeed,
Theorem~3 therein gives the full description of all ergodic
self-joinings for so called Rokhlin extensions given by regular
cocycles. In particular, it is shown in \cite{LeMeNa03} that
$\Psi_{2}$ is regular whenever $\alpha$ has bounded partial
quotients. In the present paper we show that $\Psi_{2}$ is regular
without the assumption of boundedness on the partial quotients of
$\alpha$. For higher dimensions ($d\geq2$), we give sufficient
conditions for regularity. While the case $d=2$ we leave unsolved,
for $d=3$ we give examples of non-regular cocycles $\Psi_{4}$ for
certain values of the parameters $\beta_j$, $j=1,2,3$. In
Section~\ref{secgeneric}, we show that the cocycle~(\ref{pierwsza})
is ergodic for a generic choice (in the measure-theoretic and the
topological sense) of parameters $\beta_1,\ldots,\beta_d$, $d\geq2$.

One of our basic tools is Theorem~\ref{reduc1} below. It states that
the group of essential values of $\Psi_{d+1}$ contains the diagonal
subgroup $\Delta_{d+1}=\{(t,...,t):\: t \in \R\}\subset\R^{d+1}$. It
follows that the problem of regularity for the cocycle $\Psi_{d+1}$
is reduced to the regularity of the cocycles of the form $\Phi_{d}
=(1_{[0, \beta_j)} - \beta_j, j= 1,...,d)$. Note that by taking
linear combinations of cocycles of the form $\Phi_d$ we can get
every step cocycle. Therefore, we devote a large part of the paper
to the problem of classification of steps functions with values in
$\R^{d}$. The problem of ergodicity or regularity of step functions,
mainly in the one dimensional case, has been broadly studied in the
literature, for instance see: \cite{Or83}, \cite{Pa90}, \cite
{Fr00}, \cite{LePaVo96}, \cite{Co09}, \cite{Zh10}; note that in
Corollary~\ref{betawsd} we generalize the main result of
\cite{Zh10}. We would like to emphasize that the methods presented
in the paper, in large part (see Section~\ref{nowa}), seem to be new
and they contribute to a better understanding of the problem of
regularity of general vectorial cocycles $\Phi:\T\to\R^d$ over
irrational rotations.
\section{Preliminaries}

\subsection{Irrational rotations} \label{subsectIrr}
Let us recall some basic facts about continued fractions (e.g.\
\cite{Kh}). Let $[0;a_1,..., a_n,...]$ be the continued fraction
representation of $\alpha$, and let $(p_n/q_n)_{n \ge -1}$ be the
sequence of its convergents. The integers $p_n$ (resp.\ $q_n$) are
the {\em numerators} (resp.\ {\em denominators}) of $\alpha$. We
have $p_{-1}=1$, $p_0=0$, $q_{-1}=0$, $q_0=1$, and for $n \ge 1$:
\begin{equation} \label{converg_eq}
p_n = a_n p_{n-1}+p_{n-2}, q_n = a_n q_{n-1}+q_{n-2}, \ (-1)^n =
p_{n-1} q_n - p_n q_{n-1}.
\end{equation}

As usual the fractional part of $u \in \R$ is $\{u \}= u - [u]$,
where $[u]$ is the integral part of $u$. For $u\in\R$, set $\|u\|=
\inf_{n \in \Z} |u - n| = \min (\{u \}, 1 - \{u\})$. Then for any
integer $M$ we have $\|Mu\|\leqslant |M| \|u\|$. Note that $\|
\cdot\|$ introduces a translation invariant distance on $\tor$.

We have for $n \ge 0$, $\|q_n \alpha\| = (-1)^n \theta_n$ with
$\theta_n=q_n \alpha - p_n$, and moreover
\begin{eqnarray}
1 &=& q_n\|q_{n+1} \alpha \| + q_{n+1} \|q_n \alpha\|, \label{f_1} \\
{1\over q_{n+1}+q_n} &\le& \|q_n \alpha\| \le {1\over q_{n+1}}
= {1 \over a_{n+1} q_n+q_{n-1}}, \label{f_3} \\
\|q_n \alpha \| &\leq& \|k \alpha \|, \ \mbox{for}\ 1 \le |k| <
q_{n+1} \label{f_4}.
\end{eqnarray}

An irrational $\alpha$ is said to be of {\em bounded type} if the
sequence $(a_n)$ is bounded.

We need some preliminary lemmas on the diophantine properties of
$\alpha$.

\begin{Lemma}\label{cont2qn}
1) Let $p,q$ be two coprime positive integers and $\theta =q(\alpha
-{p \over q})$ with $|\theta| < {1 \over q}$. When $\theta > 0$,
each interval $[{j \over q}, {j+1 \over q})$, $0 \leq j \leq q-1$
contains one (and only one) number of the form $\{k \alpha\}$, with
$0 \leq k \leq q-1$. When $\theta < 0$ the same is true for $j=1,
\ldots, q-2$; there are two points $k\alpha$ (one for $k=0$) in $[0,
{1\over q})$ and no such a point in $[{q-1\over q},1)$.

2) For each $x\in\T$ the distance between two consecutive elements
of the set $\big\{\{x + k\alpha\}:\: k = 0,\dots, q-1\big\}$ is $<
{2 \over q}$.

3) There are at most two elements of the set $\big\{\{x + k
\alpha\}:\: k = 0, \dots, q-1\big\}$ in any interval on the circle
of length ${1 \over q}$ (hence at most four such elements in any
interval of length ${2 \over q}$).

4) If additionally $q=q_n$, the distance between two consecutive
elements of the set $\big\{\{x - k \alpha\}:\: k = 0,\dots, q\big\}$
is $> {1 \over 2q_n}$.

\end{Lemma}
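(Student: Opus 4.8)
The plan is to prove the four assertions in order, using 1) as the engine and deducing 2)--4) from it together with the Diophantine bounds recalled above. For 1), the idea is to separate the rational main term from the small perturbation. Writing $k\alpha = \frac{kp}{q} + \frac{k\theta}{q}$ and reducing $kp$ modulo $q$, coprimality of $p,q$ makes $k\mapsto r_k:=kp \bmod q$ a bijection of $\{0,\dots,q-1\}$ onto itself; thus the points $\{kp/q\}$ are precisely the grid $0,1/q,\dots,(q-1)/q$, one in each cell $C_j:=[j/q,(j+1)/q)$. The decisive estimate is that the perturbation cannot move a point across more than one cell boundary: since $|\theta|<1/q$ and $0\le k\le q-1$, we have $|k\theta/q|<(q-1)/q^2<1/q$. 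If $\theta>0$ the perturbation lies in $[0,1/q)$, so $\{k\alpha\}$ remains in $C_{r_k}$ and each cell receives exactly one point. If $\theta<0$ the perturbation lies in $(-1/q,0]$, so for $k\ge 1$ the point with $r_k=j+1$ is pushed down into $C_j$; since $r_k=0$ forces $k=0$ (whence that point sits at $0\in C_0$), one obtains exactly one point in each $C_j$ with $1\le j\le q-2$, two points in $C_0$, and none in $C_{q-1}$, as claimed.

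For 2) and 3) I would first remove the shift $x$: translation by $x$ is an isometry of $\T$ taking $\{k\alpha\}$ to $\{x+k\alpha\}$, hence it preserves the multiset of gaps and, after translating the test interval, the number of points in it; so it suffices to treat $x=0$ and invoke 1). Assertion 3) is then immediate, since an interval of length $1/q$ meets at most two of the cells $C_j$, each of which holds at most one point, with the sole exception of the doubly-occupied $C_0$ in the case $\theta<0$; but $C_0$ is cyclically preceded by the empty cell $C_{q-1}$, and its two points are $0$ and a point in $(0,1/q)$, so no window of length $1/q$ can trap three of them. Assertion 2) comes from the same picture: two consecutive points lie in adjacent cells, giving a gap $<2/q$, the only delicate point being the empty cell $C_{q-1}$ in the case $\theta<0$, which is closed off on the right by the boundary point $0\in C_0$, so the widest gap still spans only two cells and stays below $2/q$.

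Assertion 4) I would prove purely arithmetically, independently of 1)--3). After reducing to $x=0$ and using $\|-m\alpha\|=\|m\alpha\|$, every gap of the point set is the short arc between two orbit points whose indices differ by some $m$ with $1\le |m|\le q_n-1<q_n$, hence has length at least $\|m\alpha\|$. By \eqref{f_4} (with $n$ replaced by $n-1$), $\|m\alpha\|\ge\|q_{n-1}\alpha\|$ for all such $m$, and by \eqref{f_3} we get $\|q_{n-1}\alpha\|\ge 1/(q_n+q_{n-1})>1/(2q_n)$, the last inequality because $q_{n-1}<q_n$. Thus every gap exceeds $1/(2q_n)$.

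The main obstacle is concentrated in 1): the sign analysis of $\theta$ together with the correct treatment of the boundary residues $r_k=0$ and $r_k=q-1$ is what produces the asymmetric count (two points in $C_0$, none in $C_{q-1}$) when $\theta<0$. Once the exact cell occupancy is established, 2) and 3) reduce to elementary counting of how a length-$1/q$ window can meet the cells, and 4) is a one-line consequence of \eqref{f_3} and \eqref{f_4}.
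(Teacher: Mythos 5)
Your proposal follows the paper's proof essentially step for step: part 1) via the bijection $k \mapsto kp \bmod q$ and the perturbation bound $|k\theta|/q < 1/q$, part 2) via translation invariance, part 3) by reduction to the cell occupancy established in 1), and part 4) via the chain $\|m\alpha\| \ge \|q_{n-1}\alpha\| \ge 1/(q_n+q_{n-1}) \ge 1/(2q_n)$ from (\ref{f_3}) and (\ref{f_4}). In parts 1)--3) you are in fact more careful than the paper: its proof of 3) ends with ``which clearly contradicts 1)'', overlooking that when $\theta<0$ assertion 1) explicitly allows two points in the cell $C_0$; your explicit handling of the doubly occupied $C_0$ and the empty $C_{q-1}$ (including the wrap-around gap in 2)) closes exactly this loose end, so this is an improvement rather than a deviation.

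One caveat on 4), which you share with the paper rather than introduce. You assert that consecutive points have indices differing by some $m$ with $1\le |m| \le q_n-1$, but the set in the statement is $\big\{\{x-k\alpha\}:\: k=0,\dots,q\big\}$, i.e.\ $q_n+1$ points, so the difference $m=q_n$ also occurs. This is not innocuous: by (\ref{f_4}) no orbit point lies strictly between $\{x\}$ and $\{x-q_n\alpha\}$, so this pair is consecutive, and its distance $\|q_n\alpha\| \le 1/q_{n+1}$ is smaller than $1/(2q_n)$ whenever $a_{n+1}\ge 2$. Thus assertion 4) as literally stated really requires the index range $k=0,\dots,q-1$, which is how it is invoked later (in the proof of Theorem~\ref{sepClust}, where discontinuities of a given type are $\{x_i-k\alpha\}$ with $0\le k<q_{n_k}$). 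The paper's own proof carries the identical restriction ``$\forall j,\ |j|<q_n$'' and makes the same silent off-by-one, so relative to the paper your argument for 4) is the same, and the defect lies in the statement, not in a new misstep of yours.
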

\begin{proof} The map $k \to k p {\rm \ mod \ } q = j(k) $ is a bijection of
$\{0, 1, ..., q-1\}$ onto itself. If $\theta > 0$, then $\{k
\alpha\} = \{k ({p \over q} + {\theta \over q}) \} = {j(k) \over q}
+ {k \theta \over q}$ is at distance ${k \theta \over q} < {1 \over
q}$ from ${j(k) \over q}$, hence it is in the interval $[{j(k) \over
q}, {j(k) + 1 \over q})$. The proof is similar if $\theta < 0$.
Hence the first assertion follows.

Assertion 2) is true for $x=0$ by 1); hence, because the distance is
invariant by translations, it is true for any $x\in\T$.

For 3), suppose that there are $\{x + k_1 \alpha\} < \{x + k_2
\alpha\} < \{x + k_3 \alpha\}$ distinct in an interval of length $<
1/q$. We have ${\ell\over q} \leq \{k_1 \alpha\} < \{ k_2 \alpha\} <
\{k_3 \alpha\} < {\ell + 2 \over q}$, for some $\ell$. Either
$[{\ell \over q}, {\ell+1 \over q})$ or $[{\ell +1 \over q}, {\ell
+2 \over q})$ contains two points of the set $\big\{\{k \alpha\}:\:
0 \leq k < q-1\big\}$, which clearly contradicts 1).

4) We have the following $${1 \over 2q_n} \leq {1 \over q_n +
q_{n-1}} \leq \|q_{n-1} \alpha\| \leq \|j \alpha\|, \ \forall j, |j|
< q_n$$ and the assertion follows.
\end{proof}

The first assertion of Lemma~\ref{cont2qn} implies easily the
well-known {\em Denjoy-Koksma inequality}: let $\varphi$ be a
centered function of bounded variation $V(\varphi)$ and $p/q$ a
rational number (in lowest terms) such that $|\alpha - p/q| < {1 /
q^2}$, then
\begin{equation} \label{f_8}
\left|\sum_{\ell = 0}^{q-1}\varphi(x+\ell \alpha)\right| \le
V(\varphi).
\end{equation}

Indeed, let us consider the case $\theta > 0$ (the proof is
analogous when $\theta < 0$). We can assume $x =0$. For $j= 0, ...,
q-1$, there is one and only one point $\{k_j \alpha\}$ of the set
$(\{k \alpha\}, k= 0, ..., q-1)$ in $I_j:=[{j \over q}, {j+1 \over
q})$. Therefore, since $\int \varphi \, dt = 0$, we have:
\begin{eqnarray*}
|\sum_{j=0}^{q-1} \varphi(j \alpha)| &&= |\sum_{j=0}^{q-1}
\varphi(\{k_j \alpha\}) - q\int \varphi(t) \, dt| =
|\sum_{j=0}^{q-1}  q \int_{j/q}^{(j+1) /q}
[\varphi(\{k_j \alpha\}) - \varphi(t)] dt| \\
&&\leq  q \sum_{j=0}^{q-1} \int_{j/q}^{(j+1)/q} |\varphi(\{k_j
\alpha\}) - \varphi(t)| \, dt \leq \sum_{j=0}^{q-1} Var(\varphi,
I_j) = Var(\varphi, [0, 1)).
\end{eqnarray*}

\begin{Lemma} \label{orbi1} 1) If there exists $n_0$ such that
$\inf_{0 \leq |j| < q_n} \|\beta - j \alpha\| < \frac12
\|q_{n+1}\alpha\|, \ \forall n \ge n_0$, then $\beta \in \Z \alpha +
\Z$.

2) Suppose $\alpha$ of bounded type.

a) If $\beta \not \in \Z \alpha + \Z$, then there exist $c >0$ and
an increasing sequence $(n_k)$ such that, for every $k \geq 1$,
$\|\beta - j \alpha\| \geq c/q_{n_k}$, for $0 \leq |j| \leq
q_{n_k}$.

b) If $\beta = {t\over r} \alpha + {u\over s} \in
(\Q\alpha+\Q)\setminus (\Z\alpha+\Z)$, then there exists $c
> 0$ such that $\|\beta - j \alpha\| \geq c/q_{n}$, for $0 \leq |j|
\leq q_{n}$ $(n\geq1)$.
\end{Lemma}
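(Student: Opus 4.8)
The plan is to handle the three parts in turn, with part 1) serving as the engine that drives part 2a), while part 2b) is obtained by a separate denominator-clearing argument.

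For 1), I would show that the near-solutions stabilize. For each $n\ge n_0$ choose $j_n$ with $|j_n|<q_n$ and $\|\beta-j_n\alpha\|<\tfrac12\|q_{n+1}\alpha\|$. Since $(\|q_m\alpha\|)_m$ is decreasing, both $\|\beta-j_n\alpha\|$ and $\|\beta-j_{n+1}\alpha\|$ are $<\tfrac12\|q_{n+1}\alpha\|$, so the triangle inequality gives $\|(j_{n+1}-j_n)\alpha\|<\|q_{n+1}\alpha\|$. On the other hand $|j_{n+1}-j_n|\le(q_{n+1}-1)+(q_n-1)<q_{n+1}+q_n\le q_{n+2}$, while (\ref{f_4}) at level $n+1$ asserts $\|q_{n+1}\alpha\|\le\|k\alpha\|$ for all $1\le|k|<q_{n+2}$. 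These two facts are compatible only when $j_{n+1}=j_n$. Hence $j_n\equiv j_*$ is eventually constant, and $\|\beta-j_*\alpha\|<\tfrac12\|q_{n+1}\alpha\|\to0$ forces $\beta-j_*\alpha\in\Z$, that is $\beta\in\Z\alpha+\Z$. I expect this stabilization step to be the main obstacle: the crux is the elementary but essential inequality $|j_{n+1}-j_n|<q_{n+2}$, which is exactly what allows (\ref{f_4}) to be invoked.

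For 2a), I would use the contrapositive of 1): if $\beta\notin\Z\alpha+\Z$, then for every $n_0$ there is some $m\ge n_0$ with $\inf_{0\le|j|<q_m}\|\beta-j\alpha\|\ge\tfrac12\|q_{m+1}\alpha\|$, producing an increasing sequence $(m_k)$ with this property. Setting $n_k:=m_k-1$, every $j$ with $|j|\le q_{n_k}<q_{m_k}$ lies in the range of that infimum, so $\|\beta-j\alpha\|\ge\tfrac12\|q_{n_k+2}\alpha\|$. The bounded type hypothesis $a_n\le A$ now enters through (\ref{f_3}), which gives $q_{m+1}\le(A+1)q_m$ for all $m$ and hence $\|q_{n_k+2}\alpha\|\ge\frac{1}{2q_{n_k+3}}\ge\frac{1}{2(A+1)^3q_{n_k}}$; this yields the claim with $c=\frac{1}{4(A+1)^3}$. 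The shift $n_k=m_k-1$ is precisely what widens the index range from $|j|<q_{m_k}$ to $|j|\le q_{n_k}$.

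For 2b) I would clear denominators instead of appealing to 1). With $L:=rs$ one has $L\beta=ts\,\alpha+ur\in\Z\alpha+\Z$, so for any integer $j$, $\|L(\beta-j\alpha)\|=\|(ts-Lj)\alpha\|=\|m_j\alpha\|$ where $m_j:=s(t-rj)\in\Z$. The condition $\beta\notin\Z\alpha+\Z$ forces $m_j\neq0$ for every integer $j$, with the sole possible exception $j=t/r$ occurring only when that quotient is an integer, in which case $\beta-j\alpha=\tfrac{u}{s}$ and $\|\beta-j\alpha\|=\|\tfrac{u}{s}\|$ is a positive constant. Since $|m_j|\le(s|t|+sr)q_n=:Cq_n$ for $|j|\le q_n$, and since bounded type yields the badly-approximable bound $\|m\alpha\|\ge\frac{1}{2(A+1)|m|}$ for every nonzero integer $m$ (choose $N$ with $q_N\le|m|<q_{N+1}$ and combine (\ref{f_4}) with (\ref{f_3}) and $q_{N+1}\le(A+1)q_N$), we get $\|m_j\alpha\|\ge\frac{1}{2(A+1)Cq_n}$. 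Dividing by $L$ via $\|L\xi\|\le L\|\xi\|$ produces $\|\beta-j\alpha\|\ge c/q_n$ for all $n\ge1$, with $c:=\frac{1}{2L(A+1)C}$. This part is routine once the reduction to $\|m_j\alpha\|$ is in place; the only care required is to record that $C$ and $L$ depend on $\beta$ alone, so that $c$ is a genuine constant.
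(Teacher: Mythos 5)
Your proof is correct. For parts 1) and 2a) you follow essentially the paper's route: your triangle-inequality stabilization of $(j_n)$ is exactly the paper's disjointness of the intervals of radius $\frac12\|q_{n+1}\alpha\|$ around the points $\{j\alpha\}$ (both arguments hinge on $|j_{n+1}-j_n| < q_n+q_{n+1}\le q_{n+2}$ together with (\ref{f_4})), and 2a) is in both cases the contrapositive of 1) plus the comparability of $\|q_{n_k+1}\alpha\|$ with $1/q_{n_k}$ in the bounded type case; your index shift $n_k=m_k-1$ is a clean way to justify the closed range $0\le|j|\le q_{n_k}$, a point the paper passes over silently. In part 2b) you genuinely diverge. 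The paper fixes the minimizer $j_n$ of $\|\beta-j\alpha\|$ over $0\le|j|\le q_n$, rules out $rsj_n-ts=0$ for large $n$ by a separate contradiction argument, and then needs the growth estimate (\ref{dodaugust}) to ensure $0<|rsj_n-ts|<q_{n+rs+1}$ so that (\ref{f_3}) and (\ref{f_4}) yield $\|(rsj_n-ts)\alpha\|\ge \frac{1}{2Kq_n}$, finally shrinking $c$ a posteriori to cover small $n$. You instead clear denominators once ($L=rs$) and invoke the badly-approximable inequality $\|m\alpha\|\ge\frac{1}{2(A+1)|m|}$, valid for every nonzero integer $m$ precisely because $\alpha$ has bounded partial quotients, applied to $m_j=s(t-rj)$, with the single degenerate index $rj=t$ handled by the constant lower bound $\|u/s\|\ge 1/s>0$. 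Your route is shorter, avoids the large-$n$ asymptotics entirely, and gives the bound uniformly for all $n\ge1$ in one stroke; the only bookkeeping it needs is the one you record (that $C$ and $L$ depend only on $\beta$), plus the trivial final step of capping $c$ by $\|u/s\|$ so that the exceptional index is absorbed into the same constant.
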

\begin{proof} 1) For each $n \geq 1$, consider the family of intervals
$I_n^j=[\{j\alpha\} -\frac12 \|q_{n+1} \alpha\|, \{j\alpha\}+\frac12
\|q_{n+1} \alpha\|]$, $j= -q_n+1, ..., q_n-1$.

Let $n \geq n_0$. If $j \in \{-q_n+1,...,q_n -1\}$ and $j' \in
\{-q_{n+1} + 1,...,q_{n+1} - 1\}$ are distinct, then the intervals
$I_n^j$ and $I_{n+1}^{j'}$ are disjoint, since otherwise by
 $\|(j'-j)\alpha\| \leq \frac12 \|q_{n+1} \alpha\| +
\frac12 \|q_{n+2} \alpha\| < \|q_{n+1} \alpha\|$, with $0 < |j'-j| <
q_{n} + q_{n+1} \leq q_{n+2}$ which contradicts (\ref{f_4}).

If $\inf_{0 \leq |j| < q_n} \|\beta - j \alpha\| < \frac12
\|q_{n+1}\alpha\|$ for $n \ge n_0$, then there is a sequence
$(j_n)_{n \geq n_0}$ such that $0 \leq |j_n| < q_n$ and $\beta \in
I_n^{j_n}$ for $n \geq n_0$.

Since $\beta\in I^{j_n}_n\cap I^{j_{n+1}}_{n+1}$, we have
$j_0:=j_{n_0}=j_{n_1}=...$. This implies $\beta=\{j_0\alpha\}$ which
completes the proof of 1).

\vskip 3mm 2a) By part 1) if $\beta \not \in \Z \alpha + \Z$, it
follows that there exists a sequence $(n_k)$ such that $\|\beta - j
\alpha\| \geq \frac12 \|q_{n_k+1}\alpha\|$, for $0 \leq |j| \leq
q_{n_k}$ and $k \geq 1$. Suppose additionally that $\alpha$ is of
bounded type. Since $\|q_{n_k+1}\alpha\|$ and ${1\over q_{n_k}}$ are
comparable, there is $c >0$ such that $\|\beta - j \alpha\| \geq
c/q_{n_k}$ for $0 \leq |j| \leq q_{n_k}$.

\vskip 3mm 2b) Now let $\beta = {t \over r} \alpha +{u \over s} \not
\in \Z \alpha + \Z$ with $t, r, u, s$ integers and $r, s \geq 1$.
Let $j_n$ be such that $\varepsilon_n := \min_{j: 0 \leq |j| \leq
q_{n}} \|{t \over r} \alpha +{u \over s} - j \alpha\| = \|{t \over
r} \alpha +{u \over s} - j_n \alpha\|>0$.

We have ${t \over r} \alpha +{u \over s}= j_n \alpha + \ell_n \pm \
\varepsilon_n$, for an integer $\ell_n$; hence: $(r s j_n - ts)
\alpha = r u - r s \ell_n \pm \ r s\varepsilon_n$. It follows
\begin{eqnarray}
\|(r s j_n - ts) \alpha\| \leq r s \, |\varepsilon_n|.
\label{varepn}
\end{eqnarray}

Suppose that $r s j_n - ts = 0$ for infinitely many $n$. Then ${t
\over r} = j_n$ and $|u-s \ell_n| = s |\varepsilon_n|$. Since
$|\varepsilon_n|$ is arbitrarily small for $n$ large enough and $u,
s, \ell_n$ are integers, it follows $u= s \ell_n$. Then, we find
$\beta = j_n \alpha + \ell_n$ contrary to the assumption that
$\beta$ is not in $\Z \alpha + \Z$. It follows that the integers $r
s j_n - ts$ are different from zero for all $n\geq n_1$.

Now, $\alpha$ is of bounded type, so there is $K>0$ such that
$q_{n+rs+1} \leq K \, q_{n}$, for every $n \geq 1$. Using
additionally (\ref{f_3}) and (\ref{f_4}), we obtain
\begin{eqnarray}
{1\over 2 K \, q_{n}} \leq {1\over 2 q_{n+rs+1}} \leq \|q_{n+rs \,}
\alpha \| \leq \|k \alpha \|, \ \mbox{for}\ 1 \le |k| < q_{n+rs+1}.
\label{over2K}
\end{eqnarray}

On the other hand, in view of~(\ref{converg_eq}), given any constant
$C>0$ we have \beq\label{dodaugust}q_{n+m}\geq mq_n+C\eeq for all
$m\geq1$ and $n$ large enough (indeed, it suffices to consider $n$
so that $q_{n-1}\geq C$). Hence, for the integer $|r s j_n - ts|$ we
have
$$0<|r s j_n - ts| \leq rsq_{n} +|t|s \leq q_{n+rs+1}$$ whenever
$n$ is large enough. Therefore, for $n$ large enough, by
(\ref{varepn}) and (\ref{over2K}), we obtain
$$|\varepsilon_n| \geq c / q_n, \text { with } c = {1 \over 2 K}.$$
By taking $c>0$ smaller if necessary, the conclusion holds for all $n\geq1$.
\end{proof}

\vskip 3mm
\begin{Lemma} \label{orbi2} Suppose $\alpha$ of bounded type. Let $B$
be a non-empty finite subset of $(\Q \beta + \Q \alpha + \Q) -
(\Z\alpha + \Z)$, where $\beta$ is a real number. Then there exist
$c > 0$ and a strictly increasing sequence $(n_k)$ such that
$$\forall \beta_i \in B, \ \forall k \geq 1, \ \|\beta_i - j \alpha\|
\geq c/q_{n_k}, \text{ for } 0 \leq |j| \leq q_{n_k}.$$
\end{Lemma}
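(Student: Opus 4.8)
The plan is to reduce the simultaneous statement to the one–parameter estimates of Lemma~\ref{orbi1}, applied to a \emph{single} auxiliary number, and then to transfer that estimate to every element of $B$ at once. First I would clear denominators: since $B$ is finite, there is an integer $D\ge1$ and integers $m_i,n_i,p_i$ with $D\beta_i=m_i\beta+n_i\alpha+p_i$ for each $\beta_i\in B$. Split $B=B_1\cup B_2$, where $B_1$ collects the $\beta_i$ that happen to lie in $\Q\alpha+\Q$ and $B_2=B\setminus B_1$. For $\beta_i\in B_1$ there is nothing to do: such a $\beta_i$ lies in $(\Q\alpha+\Q)\setminus(\Z\alpha+\Z)$, so part~2b) of Lemma~\ref{orbi1} already yields $c_i>0$ with $\|\beta_i-j\alpha\|\ge c_i/q_n$ for \emph{every} $n\ge1$ and $0\le|j|\le q_n$, a bound on the full sequence that survives passage to any subsequence produced later. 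Hence the whole difficulty is concentrated in $B_2$, and I may assume $B_2\neq\emptyset$; this forces $\beta\notin\Q\alpha+\Q$ and, for every $\beta_i\in B_2$, $m_i\neq0$.

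The key device is the single number $M\beta$ with $M=\operatorname{lcm}\{|m_i|:\beta_i\in B_2\}$. Since $\beta\notin\Q\alpha+\Q$ we have $M\beta\notin\Z\alpha+\Z$, so part~2a) of Lemma~\ref{orbi1} supplies $c_0>0$ and a strictly increasing sequence $(n_k)$ with $\|M\beta-\ell\alpha\|\ge c_0/q_{n_k}$ for all $0\le|\ell|\le q_{n_k}$. Choosing $M$ to be a common multiple is what lets a single good subsequence dominate the whole family: if $\|\beta_i-j\alpha\|=\varepsilon$, then multiplying $D\beta_i=m_i\beta+n_i\alpha+p_i$ through by $t=M/m_i\in\Z$ and using $\|Dx\|\le D\|x\|$ and $\|tx\|\le|t|\,\|x\|$ gives $\|M\beta-\ell\alpha\|\le MD\varepsilon$ for $\ell=t(Dj-n_i)$, an integer with $|\ell|\le M(D+1)q_n$ once $n$ is large enough that $|n_i|\le q_n$.

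I would then fix once and for all an index shift $s$, depending only on $M$, $D$ and the bound $A$ on the partial quotients. By~(\ref{converg_eq}) the denominators satisfy $q_{m}\ge q_{m-1}+q_{m-2}$, so they grow at least geometrically and one can pick $s$ with $M(D+1)q_{n}\le q_{n+s}$ for all large $n$; bounded type gives the opposite comparison $q_{n+s}\le K^s q_n$ with $K=A+1$. Now combine the two estimates at levels $n_k$ and $N_k:=n_k-s$: if some $\beta_i\in B_2$ had $\|\beta_i-j\alpha\|=\varepsilon$ with $0\le|j|\le q_{N_k}$, then the associated $\ell$ satisfies $|\ell|\le M(D+1)q_{N_k}\le q_{n_k}$, so $MD\varepsilon\ge\|M\beta-\ell\alpha\|\ge c_0/q_{n_k}$, and therefore $\varepsilon\ge c_0/(MD\,q_{n_k})\ge (c_0/(MDK^s))\,q_{N_k}^{-1}$. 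Setting $c=\min\!\big(c_0/(MDK^s),\ \min_{\beta_i\in B_1}c_i\big)>0$ and discarding the finitely many $k$ with $n_k\le s$, we get a strictly increasing sequence $(N_k)$ with $\|\beta_i-j\alpha\|\ge c/q_{N_k}$ for all $\beta_i\in B$ and $0\le|j|\le q_{N_k}$, which is the assertion.

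The only genuinely new ingredient beyond Lemma~\ref{orbi1} is the replacement of the finite family $B_2$ — whose individual good subsequences from part~2a) need not overlap — by the single number $M\beta$, whose good subsequence controls all of them simultaneously; the rational structure of the $\beta_i$ is exactly what makes such a common multiple available. I expect the main bookkeeping obstacle to be the two opposite comparisons between $q_{n_k}$ and $q_{N_k}$: the Fibonacci–type lower bound needed to keep the auxiliary index $\ell$ inside the admissible range $|\ell|\le q_{n_k}$, and the bounded–type upper bound needed to absorb the fixed shift $s$ into one multiplicative constant. Both are routine consequences of~(\ref{converg_eq}) together with the boundedness of the partial quotients.
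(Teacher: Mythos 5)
Your proof is correct and takes essentially the same route as the paper's: both split $B$ into the part lying in $\Q\alpha+\Q$ (handled by part 2b) of Lemma~\ref{orbi1} along the full sequence of denominators) and the rest, apply part 2a) of Lemma~\ref{orbi1} to a single integer multiple of $\beta$ (your $M\beta$ with $M$ an lcm; the paper's $\beta'=(\prod_\ell v_\ell)\beta$), and then transfer that one lower bound to every $\beta_i$ simultaneously by integer multiplication, absorbing the multiplicative loss through a fixed index shift justified by the bounded-type comparison of the $q_n$. The differences (lcm versus product of numerators, clearing denominators first, making the Fibonacci-growth step explicit) are purely bookkeeping.
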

\begin{proof} We have $B=B_0 \cup B_1$, where the elements $\beta_i$
in $B_0$ are of the form $\beta_i= {u_i \over s_i} \alpha + {w_i
\over s_i}$, with $u_i, w_i, s_i$ integers, $s_i \not= 0$, $\beta_i
\not \in \Z \alpha + \Z$, and the elements in $B_1$ of the form
$\beta_i= {v_i \over s_i} \beta + {u_i \over s_i} \alpha + {w_i
\over s_i}$, with $v_i, u_i, w_i, s_i$ integers and $v_i, s_i \not=
0$. Remark that $B_0$ or $B_1$ can be empty and that $B= B_0$ if
$\beta \in \Q \alpha + \Q$.

If $\beta \not \in \Q\alpha + \Q$ and $B_1$ is not empty, we apply
Lemma~\ref{orbi1} to $\beta':= (\prod v_\ell) \beta$. Let $M= (\max
s_\ell) (\prod v_\ell)$, $M_i= s_i\prod_{\ell \not = i} v_{\ell}$.
We have ${v_i \over s_i}\beta = {\beta' \over M_i}$. There are a
positive constant $c$ and a sequence $(n_k)$ such that
$$\|\beta' - j \alpha \| \geq {c \over q_{n_k}}, \ 0 \leq |j| <
q_{n_k}.$$

Since $L_i := M_i {u_i \over s_i}$ and $M_i {w_i \over s_i}$ are
integers, we have for $j$ such that $0 \leq |M_i j - L_i| <
q_{n_k}$:
\begin{eqnarray*}
M_i \left\|{v_i \over s_i}\beta + {u_i \over s_i} \alpha + {w_i
\over s_i} - j \alpha \right\| &\geq& \left\|M_i{v_i \over s_i}\beta
- M_i\left(j\alpha - {u_i
\over s_i} \alpha - {w_i \over s_i}\right) \right\| \\
&=& \left\|\beta' - (M_i j - L_i) \alpha\right\| \geq {c \over
q_{n_k}}.
\end{eqnarray*}
We have $M_i |j|+ |L_i| \leq M |j| + L$, with $L:= \max |L_i|$. As
$\alpha$ is of bounded type, there are $r$ and $K$ such that $M
q_{n-r} +L \leq q_n \leq K q_{n-r}$, for all $n \geq 1$. This
implies, simultaneously for every $i$:
$$\left\|{v_i \over s_i}\beta + {u_i \over s_i} \alpha + {w_i \over
s_i} - j \alpha \right\| \geq {1 \over M_i} \left\|\beta' - (M_i j -
L_i) \alpha\right\| \geq {c \over MK} {1 \over q_{n_k-r}}, \text{ if
} |j| < q_{n_k -r}.$$

For $\beta_i$ in $B_0$, if this subset is non empty, by the part 2b)
of the previous lemma any subsequence of $(q_n)$ is ``good''.

We conclude that the subsequence $(q_{{n_k}-r})_{r \geq 1}$ fulfills
the assertion of the lemma.
\end{proof}

\vskip 3mm
\begin{Remark} As the proof of Lemma~\ref{orbi2} shows, the
result is true for any change of the part belonging to $\Q\alpha+\Q$
for the elements of $B_1$ (that is, we may replace $ {u_i \over s_i}
\alpha + {w_i \over s_i}$, for $i=1,...,t$, by a different element
of $\Q\alpha+\Q$). However, each time we change this part, we also
change the resulting subsequence $(q_{n_k})$.\end{Remark}

\begin{Remark} When $\alpha$ is not of bounded type, the set
$K(\alpha) = \{\beta\in\R: \lim_n \|q_n \beta\| = 0 \}$ is an
uncountable additive subgroup of $\R$.

Nevertheless, if $\lim_n \|q_n \beta\| = 0$ and $\beta \not \in \Z
\alpha + \Z$, the rate of convergence toward 0 is moderate, as shown
by the following lemma (see \cite{Co80}, \cite{La88}, \cite{KrLi91},
\cite{Co09}).
\end{Remark}

\begin{Lemma} \label{qnbeta0} If there exists $n_0$ such that
$\|q_n \beta\|\le {1\over 4} q_n \|q_n \alpha\|$ for $n \ge n_0$,
then $\beta \in \Z \alpha + \Z$. In particular, if $\alpha$ is of
bounded type and $\beta$ satisfies $\lim_n \|q_n \beta \| = 0$, then
$\beta \in \Z \alpha + \Z$.
\end{Lemma}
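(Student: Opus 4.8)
The plan is to compare, scale by scale, the signed errors of $\beta$ and of $\alpha$ relative to the denominators $q_n$, and to show that the error sequence of $\beta$ eventually satisfies the same linear recursion as that of $\alpha$. First I would let $b_n$ be the nearest integer to $q_n\beta$ and set $u_n:=q_n\beta-b_n\in(-\tfrac12,\tfrac12]$, so that $|u_n|=\|q_n\beta\|$; similarly $\theta_n=q_n\alpha-p_n$ has $|\theta_n|=\|q_n\alpha\|$. From the recursion $q_{n+1}=a_{n+1}q_n+q_{n-1}$ in (\ref{converg_eq}) one obtains $u_{n+1}=a_{n+1}u_n+u_{n-1}-m_{n+1}$, where $m_{n+1}:=b_{n+1}-a_{n+1}b_n-b_{n-1}\in\Z$, to be compared with the exact relation $\theta_{n+1}=a_{n+1}\theta_n+\theta_{n-1}$ satisfied by the $\theta_n$.

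The key step, and the place where the constant $\tfrac14$ is used, is to prove that $m_{n+1}=0$ for all large $n$. Writing $m_{n+1}=a_{n+1}u_n+u_{n-1}-u_{n+1}$ and using the hypothesis $|u_k|=\|q_k\beta\|\le\tfrac14\,q_k\|q_k\alpha\|$, I would bound the three terms separately. For the first, $a_{n+1}|u_n|\le\tfrac14\,a_{n+1}q_n\|q_n\alpha\|\le\tfrac14\,q_{n+1}\|q_n\alpha\|<\tfrac14$, since $a_{n+1}q_n\le q_{n+1}$ and, by (\ref{f_1}), $q_{n+1}\|q_n\alpha\|=1-q_n\|q_{n+1}\alpha\|<1$. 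The same identity (at the relevant index) gives $q_k\|q_k\alpha\|<1$, whence $|u_{n-1}|<\tfrac14$ and $|u_{n+1}|<\tfrac14$. Summing, $|m_{n+1}|<\tfrac34<1$, and since $m_{n+1}$ is an integer it must vanish for every $n$ with $n-1\ge n_0$.

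It then remains to extract $\beta$ from the fact that, for large $n$, $u_n$ obeys the recursion $u_{n+1}=a_{n+1}u_n+u_{n-1}$, exactly as $\theta_n$ does. I would use the Wronskian $D_n:=u_n\theta_{n-1}-u_{n-1}\theta_n$: the common recursion gives $D_{n+1}=-D_n$, so $|D_n|$ is eventually constant, while $|D_n|\le\|q_n\beta\|\,\|q_{n-1}\alpha\|+\|q_{n-1}\beta\|\,\|q_n\alpha\|\to0$; hence $D_n=0$ and $u_n=c\,\theta_n$ for a fixed real $c$ and all large $n$. Unwinding $u_n=c\theta_n$ gives $q_n(\beta-c\alpha)=b_n-cp_n$; eliminating between indices $n$ and $n+1$ and using $p_{n-1}q_n-p_nq_{n-1}=(-1)^n$ from (\ref{converg_eq}) yields $q_{n+1}b_n-q_nb_{n+1}=(-1)^{n+1}c\in\Z$, so $c\in\Z$, and then $\beta-c\alpha=(b_n-cp_n)/q_n=(b_{n+1}-cp_{n+1})/q_{n+1}$ has denominator dividing the coprime integers $q_n,q_{n+1}$, forcing $\beta-c\alpha\in\Z$; thus $\beta\in\Z\alpha+\Z$. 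The ``in particular'' statement then follows because, for $\alpha$ of bounded type with $a_n\le A$, the lower bound in (\ref{f_3}) gives $q_n\|q_n\alpha\|\ge q_n/(q_{n+1}+q_n)\ge 1/(A+2)$, so $\lim_n\|q_n\beta\|=0$ makes the hypothesis $\|q_n\beta\|\le\tfrac14\,q_n\|q_n\alpha\|$ hold for all large $n$. I expect the bounding of $m_{n+1}$ to be the crux; the concluding extraction is routine once the two recursions are matched.
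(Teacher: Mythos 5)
Your proof is correct, and it is worth noting that the paper itself does not prove Lemma \ref{qnbeta0} at all: it imports the statement from the literature (the references \cite{Co80}, \cite{La88}, \cite{KrLi91}, \cite{Co09} cited in the preceding remark). So there is no internal proof to compare with, and your argument supplies a self-contained one. The steps all check: the defect $m_{n+1}=a_{n+1}u_n+u_{n-1}-u_{n+1}$ is an integer, and your three bounds are each valid --- $a_{n+1}|u_n|\le\tfrac14 a_{n+1}q_n\|q_n\alpha\|\le\tfrac14 q_{n+1}\|q_n\alpha\|<\tfrac14$ by (\ref{f_1}), while $q_k\|q_k\alpha\|\le q_{k+1}\|q_k\alpha\|<1$ gives $|u_{n\pm1}|<\tfrac14$ --- so $|m_{n+1}|<\tfrac34$ forces $m_{n+1}=0$ once $n-1\ge n_0$ (this is indeed where the constant $\tfrac14$ is needed; any constant $<\tfrac13$ would do). The Wronskian step is clean: $D_{n+1}=-D_n$ under the common recursion, $|D_n|\le\tfrac14(\|q_{n-1}\alpha\|+\|q_n\alpha\|)\to0$, hence $D_n\equiv0$ and $u_n/\theta_n$ is eventually a constant $c$ (well defined since $\theta_n\neq0$ for $\alpha$ irrational). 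The elimination using $p_nq_{n+1}-p_{n+1}q_n=(-1)^{n+1}$ shows $c\in\Z$, and coprimality of $q_n$ and $q_{n+1}$ then yields $\beta-c\alpha\in\Z$, i.e.\ $\beta\in\Z\alpha+\Z$. Finally the bounded-type deduction is right: $q_n\|q_n\alpha\|\ge q_n/(q_{n+1}+q_n)\ge1/(A+2)$ when $a_n\le A$, so $\|q_n\beta\|\to0$ eventually places $\beta$ under the hypothesis of the first part. This recursion-plus-Wronskian argument is essentially the classical route (close in spirit to the cited proofs of Larcher and Kraaikamp--Liardet), executed correctly.
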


\subsection{Essential values of cocycles taking values in Abelian groups}
\label{prelimin}

In this subsection we recall the definition and general results
about essential values of a cocycle (see \cite{Sc77}, \cite{Aa97}).

Let $\xbm$ be a (non-atomic) standard Borel probability space and
$T:\xbm\rightarrow\xbm$ an ergodic automorphism. Such an
automorphism is then automatically aperiodic (that is, for each
$n\geq1$, $\{x\in X:\:T^nx=x\}$ has measure zero).

Assume that $G$ is an Abelian locally compact second countable
(l.c.s.c.) group with the $\sigma$-algebra of its Borel sets
$\cb(G)$ and a fixed Haar measure $m_G$ (we will also write $dg$
instead of $m_G$). Denote by $\overline{G}=G\cup\{\infty\}$ the
one-point compactification of $G$ (when $G$ is non compact).

 For a measurable function $\varphi: X \rightarrow G$, we denote by
$(\varphi_n)$ the cocycle generated by $\varphi$: $$\varphi_n(x) =
\sum_{k=0}^{n-1} \varphi(T^k x), \, n \geq 1$$ and we extend the
formula to all $n\in\Z$ so that, for $n , k \in \Z$,
$\varphi_{n+k}(x)=\varphi_n(x)+\varphi_k(T^nx)$.
 For simplicity, the function $\varphi$ itself will be
called a {\it cocycle}. We say that a cocycle $\varphi : X
\rightarrow G$ is ergodic if the transformation $T_\varphi: (x, g)
\to (Tx, g + \varphi(x))$ is ergodic on $X \times G$ for the measure
$\mu \times dg$.

\vskip 3mm {\it Recurrence of a cocycle}

Let $\| \ \|$ be a norm on $\R^d$. The inequality
$|\|\varphi_{n+1}(x)\| -  \|\varphi_n(T x)\|| \leq \|\varphi(x)\|$
implies the $T$-invariance of the set $\{x: \lim_n \|\varphi_n(x)\|
= +\infty\}$. Therefore by ergodicity this set has measure 0 or 1,
and  we have the following alternative: either for $\mu$-a.e. every
$x$, $\lim_n \|\varphi_n(x)\| = +\infty$ or for $\mu$-a.e. $x$
$\liminf_n \|\varphi_n(x)\| < +\infty$.

\begin{Def} A cocycle $(\varphi_n)$ over $(X, \mu, T)$ with values in $G = \R^d$
is {\it recurrent} if $\liminf_n \|\varphi_n(x)\| < \infty$, for
a.e. $x \in X$. It is {\it transient} if $\lim_n \|\varphi_n(x)\| =
+\infty$, for a.e. $x \in X$.
\end{Def}
It can be shown that recurrence for $(\varphi_n)_{n \ge 0}$ is
equivalent to conservativity of $T_\varphi$ with respect to $\mu
\times dg$ and that it implies $\liminf_n \|\varphi_n(x)\| = 0$ for
a.e. $x$.

It is also equivalent to  the following property: for each
neighborhood $U\ni0$ and $A\subset X$ of positive measure there
exists $N\in\Z\setminus\{0\}$ such that \beq\label{recu}\mu(A\cap
T^{-N}A\cap [\varphi_N\in U])>0.\eeq

\begin{Remark}\label{simpleREC} In order to give a simple example
of a recurrent cocycle recall that an increasing sequence
$(\ell_{n})$ is called a {\it rigidity sequence} for $T$ if, in the
strong operator topology, $\lim_n T^{\ell_{n}} = I$ where $I$ is the
identity mapping. Suppose that $\varphi:X\to G$ is a cocycle such
that $\varphi_{\ell_n}\to0$ in measure. Then $\varphi$ is recurrent;
indeed, in~(\ref{recu}), $T^{-q_n}A$ is almost equal to $A$ while
$[\varphi_{q_n}\in U]$ is almost the whole space $X$.
\end{Remark}

\begin{Remark} For each $d\geq1$, the cocycle generated by a function
$\varphi: \T \to \R^d$ over any irrational rotation is recurrent if
the components of $\varphi$ have bounded variation and integral~0.
Indeed by Denjoy-Koksma inequality (\ref{f_8}), since
$(\varphi_{q_n})$ is a bounded sequence in $\R^d$ the condition
$\liminf_n \|\varphi_n(x)\| < \infty$ holds for every $x$.

This applies in particular to all piecewise affine or step functions
considered in this paper.\end{Remark}

We always consider recurrent cocycles.

\vskip 3mm A cocycle $\varphi$ is called a {\it coboundary} if
$\varphi=f-f\circ T$ for a measurable map $f : X\rightarrow G$. Two
cocycles $\varphi, \psi$ are called {\it cohomologous} if
$\varphi-\psi$ is a coboundary.

\vskip 3mm {\it Regular cocycles.} An obvious obstruction to the
ergodicity of a cocycle is that $\varphi$ is cohomologous to a
cocycle $\psi$ taking its values in a smaller closed subgroup of
$G$. This suggests the following definition:

\begin{Def} \ A cocycle $\varphi$ is {\it regular} if it is
cohomologous to a cocycle $\psi$ with values in a closed subgroup
$H$ of $G$ such that $T_\psi: (x,h) \to (Tx, h + \psi(x))$ is
ergodic on $X \times H$ for the measure $\mu \times dh$, where $dh$
is the Haar measure on $H$.
\end{Def}

So, a regular cocycle is ``almost'' ergodic (up to reduction by
cohomology to a smaller closed subgroup).

One of the main tools for studying ergodicity and regularity of a
cocycle is the following notion.

{\it Essential value.} An element $g \in \overline{G}$ is called an
{\em essential value} for a cocycle $\varphi$, if for each open
neighborhood $U\ni g$ in $\overline{G}$, for each $A\in\cb$ of
positive measure, there exists $N\in\Z$ such that $\mu(A\cap
T^{-N}A\cap[\varphi_N\in U])>0$. We denote the set of essential
values by $\overline{\E}(\varphi)$ and we set
$\E(\varphi):=\overline{\E}(\varphi)\cap G$.

Note that, if $g\in\E(\varphi)$, we have $\mu(A\cap
T^{-N}A\cap[\varphi_N\in U])>0$ for infinitely many values of
$N\in\Z$. Indeed, because $T$ is ergodic and aperiodic, for each
$N\in\Z\setminus\{0\}$ we can find a subset $C\subset A$, $\mu(C)>0$
such that $T^j C\cap C = \emptyset$, for $|j| \leq N$, $j \not = 0$.
Since $g\in\E(\varphi)$, there is $N_1$ such that $\mu(C\cap
T^{-N_1} C \cap[\varphi_{N_1} \in U])>0$. The property of $C$
implies $|N_1| > |N|$. Iterating this construction, we obtain an
infinite sequence $(N_k)$ such that $\mu(A\cap
T^{-N_k}A\cap[\varphi_{N_k}\in U])>0$.

\begin{Remark}\label{popu} Cocycles with non-trivial essential values must
be recurrent. Indeed, assume that $g\in\E(\varphi)\setminus\{0\}$.
We show Property (\ref{recu}). Take $U$ a neighborhood of $0\in G$.
Then find $N\in\Z$ so that there is $B\subset X$, $\mu(B)>0$ such
that
$$\mbox{$B\subset A$, $T^NB\subset A$ and $\varphi_N(B)\subset g+U$}.$$
Apply once more the definition of the essential value, this time to
the set $T^NB$ to find $C\subset X$, $\mu(C)>0$ and an integer
$M\neq N$ such that
$$\mbox{$C\subset T^NB$, $T^MC\subset T^NB$ and $\varphi_M(C)\subset g+U$.}$$

Now, for $x\in C\subset A$ we have $T^{M-N}x=T^{-N}(T^Mx)\in
T^{-N}(T^NB)=B\subset A$. Moreover, since $T^{M-N}x\in B$,
$$\varphi_{M-N}(x)=\varphi_M(x)+\varphi_{-N}(T^Mx)=\varphi_M(x)-\varphi_N(T^{M-N}x)\in U-U.$$
\end{Remark}

It turns out that $\E(\varphi)$ is a closed subgroup of $G$.
Besides, two cohomologous cocycles have the same group of essential
values.

Let $\sigma_g(x,h) := (x, g+h)$, $g\in G$, be the action of $G$ on
$X\times G$ by translations on the second coordinate. Clearly, it
commutes with $T_{\varphi}$. Then (see \cite{Sc77}, Theorem 5.2)
$\E(\varphi)$ is the {\it stabilizer of the Mackey action} of
$\varphi$, that is
\begin{equation}\label{Mack} \E(\varphi)=\{g\in G:\: F\circ
\sigma_g=F, \forall \, \text{measurable }T_\varphi\text{-invariant}
\, F:X\times G\to \C\}.
\end{equation}
In other words $\E(\varphi)$ is the group of periods of the
measurable $T_\varphi$- invariant functions. Therefore $\varphi$ is
ergodic if and only if $\E(\varphi)=G$. If $\varphi$ is regular,
then the group $H$ in the definition of regularity is necessarily
$\ce(\varphi)$. Coboundaries are precisely regular cocycles
$\varphi$ with $\ce(\varphi)=\{0\}$.

\vskip 3mm The following lemmas show how essential values and
regularity behave when a group homomorphism is applied to a cocycle.

\begin{Lemma} \label{hom} Assume that $\varphi:X\to G$ is a cocycle and let
 $M:G\to H$ be a (continuous) group homomorphism. Then
$M\E(\varphi) \subset \E(M \varphi)$. If $M$ is an isomorphism, then
$M\E(\varphi) = \E(M \varphi)$.
\end{Lemma}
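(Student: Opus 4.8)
The plan is to reduce everything to two elementary observations: that $M$ commutes with the cocycle construction, and that $M$, being continuous, pulls open sets back to open sets.

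First I would record the cocycle identity $(M\varphi)_N = M\circ\varphi_N$ for all $N\in\Z$. This is immediate from the definition $\varphi_N(x)=\sum_{k=0}^{N-1}\varphi(T^kx)$ together with the fact that $M$ is a group homomorphism, so it commutes with finite sums and, via its respect for inverses, with the extension of the formula to negative $N$. As a consequence, for any subset $W\subset H$ one has the equality of level sets $[(M\varphi)_N\in W]=[\varphi_N\in M^{-1}(W)]$, both being measurable since $M^{-1}(W)$ is Borel when $W$ is open.

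Next, take $g\in\E(\varphi)\subset G$; I want to show $M(g)\in\E(M\varphi)$. I would fix an open neighbourhood $V\ni M(g)$ in $H$ and a set $A\in\cb$ with $\mu(A)>0$. By continuity of $M$, the set $U:=M^{-1}(V)$ is an open neighbourhood of $g$ in $G$. Since $g$ is an essential value of $\varphi$, there is $N\in\Z$ with $\mu\bigl(A\cap T^{-N}A\cap[\varphi_N\in U]\bigr)>0$; by the level-set identity above, $[\varphi_N\in U]=[(M\varphi)_N\in V]$, so the same $N$ witnesses $\mu\bigl(A\cap T^{-N}A\cap[(M\varphi)_N\in V]\bigr)>0$. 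As $V$ and $A$ were arbitrary, this gives $M(g)\in\E(M\varphi)$, hence $M\E(\varphi)\subset\E(M\varphi)$.

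For the second assertion, suppose $M$ is an isomorphism of topological groups, so that $M^{-1}:H\to G$ is again a continuous homomorphism. Applying the inclusion just proved to $M^{-1}$ and to the cocycle $M\varphi$ yields $M^{-1}\E(M\varphi)\subset\E\bigl(M^{-1}(M\varphi)\bigr)=\E(\varphi)$, where I use the cocycle identity once more to identify $M^{-1}(M\varphi)$ with $\varphi$. Applying $M$ to both sides gives $\E(M\varphi)\subset M\E(\varphi)$, and combined with the first inclusion this proves equality. I expect no serious obstacle here: the only points deserving a word of care are the verification of the cocycle identity for negative $N$ and the observation that we need only track finite essential values, so the one-point compactification $\overline{G}$ plays no role and $M$ need not be extended to it.
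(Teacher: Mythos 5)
Your proof is correct, but it takes a genuinely different route from the paper's. You argue straight from the definition of an essential value: the identity $(M\varphi)_N=M\circ\varphi_N$ turns the level set $[(M\varphi)_N\in V]$ into $[\varphi_N\in M^{-1}(V)]$, and continuity of $M$ guarantees that $M^{-1}(V)$ is an open neighbourhood of $g$, so the witness $N$ produced for $\varphi$ serves verbatim for $M\varphi$. The paper instead invokes the characterization~(\ref{Mack}) of $\E(\varphi)$ as the group of periods of measurable $T_\varphi$-invariant functions (Schmidt's theorem on the Mackey action): given a $T_{M\varphi}$-invariant function $F$ on $X\times H$, it forms the auxiliary functions $F_h(x,y)=F(x,h+My)$, checks that they are $T_\varphi$-invariant, and then upgrades the resulting periodicity to a period of $F$ itself via a Fubini argument and translation invariance of the Haar measure on $H$. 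Your argument is the more elementary and self-contained of the two --- it needs neither the nontrivial equivalence~(\ref{Mack}) nor any measure-theoretic bookkeeping on $H$ --- and your closing remarks (the cocycle identity for negative $N$, and the observation that only finite essential values are at stake, so the compactification $\overline{G}$ never enters) dispose of the only genuine points of care. What the paper's method buys is continuity with what follows: the same invariant-function/period technique is reused immediately in the proof of Lemma~\ref{quotient}, where a direct neighbourhood argument would not suffice (there one passes to a quotient $G/H$ and must prove a reverse inclusion under the hypothesis $H\subset\E(\varphi)$). For the isomorphism case both proofs proceed identically, applying the inclusion already established to $M^{-1}$ and the cocycle $M\varphi$.
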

\begin{proof} Let $p \in \E(\varphi)$. We want to show that $Mp$ is a period
of the measurable $T_{M\varphi}$-invariant functions on $X \times
H$. Let $F:X\times H\to\C$ be such a function. Moreover, by a
standard argument, we can modify $F$ on a set of zero measure in
order to obtain a function (still denoted by $F$) which is
$T_{M\varphi}$-invariant everywhere.

Let us fix $h \in H$ and denote $F_h:X\times G \to\C$ by setting
$F_h(x,y)=F(x,h + My)$. We have
\begin{align*}
(F_h\circ T_\varphi)(x,y)&=F_h(Tx,y + \varphi(x))=F(Tx,h + My + M\varphi(x))\\
&=F(x,h + My)=F_h(x,y).
\end{align*}
In view of (\ref{Mack}), $p \in \E(\varphi)$ is a period for $F_h$,
i.e., $F_h(x,y + p) = F_h(x,y)$ for a.e. $(x,y)$. This implies that,
for every $h \in H$ and for a.e. $(x, y)$, $F(x, h + My + Mp) = F(x,
h + My)$.

By Fubini, this implies that there is $y \in G$ such that for a.e.
$(x, h)$, $F(x, h + My + Mp) = F(x, h + My)$. By invariance of the
Haar measure, this implies $F(x, h + Mp) = F(x,h)$, for a.e. $(x,
y)$ and $Mp$ is a period of $F$.

For the second part of the assertion, apply the above to $M\varphi$
and $M^{-1}$.
\end{proof}

\vskip 3mm We have the following lemma (cf. Lemma 2.9 in
\cite{CoFr11}):
\begin{Lemma} \label{quotient} If $\varphi$ is a cocycle on $(X, \mu, T)$
with values in an Abelian l.c.s.c.\ group $G$ and $H$ a closed
subgroup of $G$, then the subgroup $\E(\varphi)/H$ of $G/H$ is such
that
\begin{equation}\label{b} \E(\varphi)/H\subset \E(\varphi+H).
\end{equation}
If $H \subset \E(\varphi)$, then we have \beq\label{aaaa}
\E(\varphi+H) = \E(\varphi)/ H. \eeq Moreover, $\varphi^* :=
\varphi+H:X\to G/H$ is regular if and only if $\varphi$ is regular.
\end{Lemma}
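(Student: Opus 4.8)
The plan is to prove the three assertions of Lemma~\ref{quotient} in turn, using the characterization of $\E(\varphi)$ as the group of periods of the measurable $T_\varphi$-invariant functions (equation~(\ref{Mack})), together with the behaviour of essential values under homomorphisms established in Lemma~\ref{hom}. Throughout I write $q:G\to G/H$ for the quotient homomorphism, so that $\varphi^*=\varphi+H$ means $\varphi^*=q\circ\varphi$.

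\textbf{Inclusion (\ref{b}).} This is immediate from Lemma~\ref{hom}: applying that lemma to the continuous homomorphism $q:G\to G/H$ gives $q\,\E(\varphi)\subset\E(q\circ\varphi)=\E(\varphi^*)$, and $q\,\E(\varphi)$ is precisely the image $\E(\varphi)/H$ of the subgroup $\E(\varphi)$ inside $G/H$. (Here I should note that $\E(\varphi)/H$ is a sensible object because $H\cap\E(\varphi)$ is a subgroup of $H$; when $H\subset\E(\varphi)$ it is literally the quotient group.)

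\textbf{Equality (\ref{aaaa}) under $H\subset\E(\varphi)$.} The inclusion ``$\supset$'' is (\ref{b}). For the reverse inclusion, take $g+H\in\E(\varphi^*)$ and let $F:X\times(G/H)\to\C$ be a measurable $T_{\varphi^*}$-invariant function; I must show $g+H$ is a period of $F$. The idea is to pull $F$ back to a $T_\varphi$-invariant function $\widetilde F$ on $X\times G$ by setting $\widetilde F(x,y)=F(x,y+H)$; invariance under $T_\varphi$ follows since $\widetilde F(Tx,y+\varphi(x))=F(Tx,(y+H)+\varphi^*(x))=F(x,y+H)=\widetilde F(x,y)$. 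By (\ref{Mack}), every element of $\E(\varphi)$ is a period of $\widetilde F$; in particular, since $H\subset\E(\varphi)$, the function $\widetilde F$ is $H$-periodic in its second variable, so it is genuinely the pullback of $F$ and no information is lost. Now $g+H\in\E(\varphi^*)$ means $F$ satisfies the essential-value defining property, and I want to transfer this to an essential value of $\varphi$ lying in the coset $g+H$. I expect the cleanest route is to run the argument at the level of invariant functions: $g+H$ is a period of every such $F$, hence for every $T_\varphi$-invariant $\widetilde F$ arising as a pullback one gets $\widetilde F(x,y+g)=\widetilde F(x,y)$; combined with the fact that $H\subset\E(\varphi)$ already contributes all of $H$ as periods, the coset $g+H$ is contained in the period group $\E(\varphi)$, giving $g+H\subset\E(\varphi)$ and hence $g+H\in\E(\varphi)/H$.

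\textbf{Regularity equivalence.} Assume first $\varphi$ is regular, so $\varphi$ is cohomologous to a cocycle with values in the closed subgroup $\E(\varphi)$, acting ergodically there. Composing the transfer function with $q$ shows $\varphi^*$ is cohomologous to a cocycle with values in $q(\E(\varphi))=\E(\varphi)/H=\E(\varphi^*)$ (using the previous part, since $H\subset\E(\varphi)$ holds because $H\subset\E(\varphi)$ is exactly the hypothesis that makes $\varphi^*$'s essential-value group equal to $\E(\varphi)/H$); ergodicity of the reduced $\varphi^*$-cocycle on $X\times(\E(\varphi)/H)$ follows from ergodicity of the reduced $\varphi$-cocycle by pushing forward along $q$. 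Conversely, I would use that $\varphi^*$ regular forces $\varphi^*$ to be ergodic on $X\times(\E(\varphi)/H)$, and then lift cohomology and ergodicity back through the extension $X\times\E(\varphi)\to X\times(\E(\varphi)/H)$, whose fibre $H$ already acts ergodically on $\varphi$ by the assumption $H\subset\E(\varphi)$; the regularity of $\varphi$ then follows from the tower structure of the two reductions.

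\textbf{Main obstacle.} The routine parts are the two inclusions and the pullback construction of invariant functions. The delicate step is the reverse inclusion in (\ref{aaaa}) and, relatedly, the ``only if'' direction of the regularity equivalence: one must argue that an essential value of the \emph{quotient} cocycle $\varphi^*$ lifts to a whole coset of genuine essential values of $\varphi$, which is exactly where the hypothesis $H\subset\E(\varphi)$ is indispensable. The cleanest justification is to phrase everything through the Mackey-action/period description (\ref{Mack}) and the one-to-one correspondence between $T_{\varphi^*}$-invariant functions on $X\times(G/H)$ and $H$-periodic $T_\varphi$-invariant functions on $X\times G$, so that the period group of one transports exactly to the period group of the other modulo $H$; verifying that this correspondence is bijective (using Fubini and invariance of Haar measure on $H$, as in the proof of Lemma~\ref{hom}) is the technical heart of the argument.
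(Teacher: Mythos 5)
Your treatment of (\ref{b}) and (\ref{aaaa}) is essentially the paper's own argument. For (\ref{b}), both you and the paper simply apply Lemma~\ref{hom} to the quotient homomorphism. For (\ref{aaaa}) the paper argues in the opposite (and cleaner) direction: it starts from an \emph{arbitrary} measurable $T_\varphi$-invariant function $F$ on $X\times G$, uses $H\subset\E(\varphi)$ together with (\ref{Mack}) to see that $F$ is $H$-periodic, descends it to a $T_{\varphi^*}$-invariant function $\tilde F$ on $X\times G/H$, and then applies the hypothesis $g_0+H\in\E(\varphi^*)$ to $\tilde F$. You instead pull back quotient-invariant functions, which only shows that $g$ is a period of those $T_\varphi$-invariant functions that \emph{arise as pullbacks}; to close the loop you must add exactly the paper's observation that, because $H\subset\E(\varphi)$, every $T_\varphi$-invariant function is $H$-periodic and hence is such a pullback. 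You do state this correspondence in your closing paragraph, so the idea is present, but as written the step ``combined with the fact that $H\subset\E(\varphi)$ already contributes all of $H$ as periods, the coset $g+H$ is contained in the period group'' skips the link that makes the argument valid.

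The genuine gap is in the converse of the regularity equivalence ($\varphi^*$ regular $\Rightarrow$ $\varphi$ regular), which is the hardest part of the lemma and which you leave as a sketch. Two concrete ingredients are missing. First, to transport the cohomology from $G/H$ back to $G$ you must lift the transfer function $\eta^*:X\to G/H$ to a $G$-valued function; the paper does this with a measurable selector $s:G/H\to G$, setting $\eta=s\circ\eta^*$ and $\varphi'=\varphi+\eta-\eta\circ T$, so that $\varphi'$ takes values in the closed subgroup $J=\pi^{-1}(J^*)$. Your proposal never explains how a $G/H$-valued transfer function yields a $G$-valued one, and this is where the l.c.s.c.\ hypothesis (existence of a Borel cross-section) enters. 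Second, ``lift cohomology and ergodicity back through the extension'' is not a valid principle: ergodicity does \emph{not} in general lift through group extensions --- that failure is precisely what essential values are designed to measure. The paper replaces this by an essential-value computation: since $\E(\varphi')=\E(\varphi)\supset H$, equality (\ref{aaaa}) gives $\E(\varphi')/H=\E(\varphi+H)=\E(\varphi^*)=J^*$, whence $\E(\varphi')=\pi^{-1}(J^*)=J$, and $T_{\varphi'}$ is ergodic on $X\times J$ because its group of essential values is all of $J$. Your remark that the fibre $H$ ``already acts ergodically'' gestures at this mechanism, but without the selector and without the computation via (\ref{aaaa}) the converse direction is not proved. (Your forward direction, $\varphi$ regular $\Rightarrow\varphi^*$ regular, is correct and matches the paper's, which can also be obtained from Lemma~\ref{nonregIm}.)
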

\begin{proof} Whenever $H\subset G$ is a closed subgroup, (\ref{b})
follows from Lemma~\ref{hom} applied to the homomorphism $g \in G
\to g+H \in G/H$.

Now suppose that $H \subset \E(\varphi)$. In view of (\ref{b}) it
remains to show that $\E(\varphi+H)\subset \E(\varphi)/H$. Take
$g_0+H\in \E(\varphi+H)$. All we need to show is that there exists
$h_0\in H$ such that $g_0 +h_0\in \E(\varphi)$, which, by $H\subset
\E(\varphi)$, is equivalent to showing that $g_0\in \E(\varphi)$.

Take $F:X\times G\to\C$ which is measurable and
$T_\varphi$-invariant. Since $H\subset \E(\varphi)$, $F \circ
\sigma_h = F$ for each $h\in H$ because of~(\ref{Mack}). We can
defined $\tilde F$ on $X \times G/H$ such that $\tilde F(x,g+H) =
F(x, g)$. Since $g_0+H\in \E(\varphi+H)$, again using~(\ref{Mack}),
we obtain that $\tilde F\circ\sigma_{g_0+H}=\tilde F$, which by
$H$-invariance of $F$ means $F\circ\sigma_{g_0}=F$ and therefore
$g_0\in \E(\varphi)$.

Assume now that $\varphi^\ast$ is regular. So there are a measurable
$\eta^\ast:X\to G/H$ and a closed subgroup $J^\ast\subset G/H$ such
that
$$ \psi^\ast(x):=\varphi^\ast(x)+\eta^\ast(x)-\eta^\ast(Tx)\in
J^\ast\subset G/H$$ and $T_{\psi^\ast}$ is ergodic on $X\times
J^\ast$, i.e.\ $\ce(\psi^\ast)=J^\ast$. Let $\pi:G\to G/H$ be the
canonical homomorphism and $s:G/H\to G$ a measurable selector, that
is, $s(g+H)\in g+H$ for each $g+H\in G/H$. Then
$J:=\pi^{-1}(J^\ast)$ is a closed subgroup of $G$. Denote
$\eta:=s\circ \eta^\ast$ and set
$$
\varphi'(x):=\varphi(x)+\eta(x)-\eta(Tx).$$ Then
$\varphi'(x)+H=\varphi^\ast(x)+\eta^\ast(x)-\eta^\ast(Tx)=\psi^\ast(x)\in
J^\ast$, whence $\varphi':X\to J$. By~(\ref{aaaa}), since
$\ce(\varphi')=\ce(\varphi)$, we have
$$\ce(\varphi')/H=\ce(\varphi)/H =\ce(\varphi+H)=\ce(\varphi^\ast)=J^\ast,$$
so $\ce(\varphi')=J$ and $\varphi$ is regular.

Conversely, if $\varphi$ is regular then $\varphi=\eta-\eta\circ
T+\psi$, where $\eta:X\to G$ is measurable and
$\psi:X\to\ce(\varphi)$. Then $\varphi^\ast$ is cohomologous to
$\psi+H$ which takes values in $\E(\psi)/H=\ce(\varphi)/H =
\ce(\varphi+H)$ by~(\ref{aaaa}), so $\varphi^\ast$ is regular.
\end{proof}

A particular case is when $H= \E(\varphi)$. For $\varphi^* = \varphi
+ \E(\varphi)$, we get: $\E(\varphi^*) = \{0\}$ and $\varphi$ is
regular if and only if $\varphi^\ast$ is regular (hence a
coboundary).

It can be shown that a cocycle $\varphi$ is a coboundary if and only
if $\overline{\E} (\varphi)=\{0\}$. This includes in particular the
fact that, if $\varphi$ has its values in a compact group and has no
non trivial essential values, it is a coboundary.

{\it Hence regularity is equivalent to $\overline{\E}
(\varphi^{\ast}) = \{0\}$. In particular cocycles with values in
compact groups, or more generally such that $\E(\varphi)$ has a
compact quotient in $G$, are regular.}

\vskip 3mm
\begin{Lemma} \label{nonregIm} Assume that $\varphi:X\to G$ is a cocycle and let
$M:G\to H$ be a (continuous) group homomorphism. If $\varphi:X\to G$
is regular, so is $M\varphi:X\to H$.
\end{Lemma}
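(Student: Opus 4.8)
The plan is to push the coboundary decomposition witnessing the regularity of $\varphi$ through $M$, and then to repair the one feature that fails automatically, namely that $M$ need not have closed image.

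First I recall what regularity of $\varphi$ provides. Put $K:=\E(\varphi)$, a closed subgroup of $G$. Since $\varphi$ is regular, there are a measurable $\eta:X\to G$ and a cocycle $\psi:X\to K$ with $\varphi=\eta-\eta\circ T+\psi$ and $T_\psi$ ergodic on $X\times K$, i.e.\ $\E(\psi)=K$ (essential values computed in $K$). Applying the homomorphism $M$ to this equation, and using $M(\eta\circ T)=(M\eta)\circ T$, I get
$$M\varphi=M\eta-(M\eta)\circ T+M\psi,$$
so $M\varphi$ is cohomologous, via the measurable transfer function $M\eta:X\to H$, to the cocycle $M\psi$, which takes its values in the subgroup $M(K)\subset H$.

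The delicate point is that $M(K)$ need not be closed in $H$, so $M\psi$ is not yet literally of the form demanded in the definition of regularity. The fix is to pass to the closure $L:=\ov{M(K)}$, which is a closed subgroup of $H$, and to verify that $M\psi$, regarded as a cocycle into $L$, is ergodic. I apply Lemma~\ref{hom} to the continuous homomorphism $M|_K:K\to L$ and the cocycle $\psi:X\to K$: since $\E(\psi)=K$, it yields $M(K)=M\E(\psi)\subset \E(M\psi)$, where these essential values are computed in $L$. Now $\E(M\psi)$ is a closed subgroup of $L$ (essential value groups are closed) containing the dense subgroup $M(K)$ of $L$, so $\E(M\psi)=L$; equivalently, $T_{M\psi}$ is ergodic on $X\times L$.

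Putting the pieces together, $M\varphi$ is cohomologous to $M\psi$, which takes values in the closed subgroup $L\subset H$ and is ergodic there, so by definition $M\varphi$ is regular. I expect the only genuine obstacle to be exactly the possible non-closedness of $M(\E(\varphi))$: one cannot simply assert ergodicity of $M\psi$ into its literal image, and the argument really needs both that essential value groups are closed and that they transform covariantly under homomorphisms (Lemma~\ref{hom}) in order to bridge the gap between $M(K)$ and its closure $L$.
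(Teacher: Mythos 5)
Your proposal is correct and follows essentially the same route as the paper's own proof: decompose $\varphi=f-f\circ T+\psi$ with $\psi$ ergodic into the closed subgroup $\E(\varphi)$, push through $M$, invoke Lemma~\ref{hom} to get $M\E(\psi)\subset\E(M\psi)$, and use closedness of essential-value groups to conclude that $\E(M\psi)$ equals the closure $\overline{M\E(\varphi)}$, giving ergodicity there and hence regularity of $M\varphi$. The only (immaterial) difference is that you apply Lemma~\ref{hom} to the restriction $M|_{\E(\varphi)}$ with target the closure, while the paper applies it to $M:G\to H$ and then observes that $\E(M\psi)$ is squeezed between $M\E(\varphi)$ and its closure.
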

\begin{proof}
If $\varphi$ is regular, there is a cocycle $\psi:X\to J$ with
values in a closed subgroup $J\subset G$ and a measurable function
$f:X\to G$ such that $\varphi = f - f \circ T + \psi$ and $T_{\psi}
: (x, j) \to (Tx, j + \psi(x))$ is ergodic on $X \times J$. Thus
$M\varphi = Mf - (Mf) \circ T + M\psi$.

We have $\E(\psi) = J$ by ergodicity of $T_{\psi}$ on $X \times J$
and $MJ = M\E(\psi) \subset \E(M \psi)$ by Lemma~\ref{hom}. Since
$M\psi:X\to MJ\subset\overline{MJ}$, it implies
$\ce(M\psi)\subset\overline{MJ}$. But $\ce(M\psi)$ includes $MJ$ and
is closed, so it is equal to $\overline{MJ}$.

Hence $T_{M\psi}$ is ergodic on $X \times \overline{MJ}$, which
implies the regularity of $M\varphi$.
\end{proof}

The lemma gives a variant of the proof of the second part of Lemma
\ref{quotient}. It shows that if $\varphi$ has a non regular
quotient then it is non regular.

\begin{Remark} \label{dodana} Assume that $\psi: X\to G_1\times G_2$
is a cocycle of the form $\psi=(0,\psi_2)$ with $\psi_2:X\to G_2$.
Then $\E(\psi)=\{0\}\times \E(\psi_2)$. Indeed, $\psi_N(x)$ is close
to $(g_1,g_2)$ if and only if $g_1$ is close to zero and
$(\psi_2)_N(x)$ is close to $g_2$, so this equality follows directly
from the definition of essential value. Moreover, clearly $\psi$ is
a regular cocycle if $\psi_2$ is regular and the converse follows
from Lemma \ref{nonregIm}.
\end{Remark}

 Finally we recall some effective tools which can be used to find
essential values of a cocycle. Given $T:\xbm\rightarrow\xbm$ and
$\varphi:X\rightarrow G$, we denote the image of $\mu$ on $G$ via
$\varphi$ by $\varphi_{\ast}\mu$. We will make use of the following
essential value criterion.

\begin{Prop}[\cite{LePaVo96}]\label{supp}
Assume that $T$ is ergodic and let $\varphi:X\to G$ be a cocycle
with values in an Abelian l.c.s.c.\ group G. Let $(\ell_{n})$ be a
rigidity sequence for $T$. If $(\varphi_{\ell_{n}})_{\ast}\mu\to
\nu$ weakly on $\overline{G}$, then ${\rm supp}(\nu)\subset
\overline{\E}(\varphi)$.
\end{Prop}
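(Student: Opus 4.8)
The plan is to show that every $g_0\in\operatorname{supp}(\nu)$ is an essential value of $\varphi$. So I would fix such a $g_0$, an open neighbourhood $U\ni g_0$ in $\overline{G}$, and a set $A\in\cb$ with $\mu(A)>0$, and then produce $N\in\Z$ with $\mu(A\cap T^{-N}A\cap[\varphi_N\in U])>0$; the natural candidates are $N=\ell_n$. Since $\overline{G}$ is compact metrizable, I would pick a continuous $h:\overline{G}\to[0,1]$ with $\operatorname{supp}h\subset U$ and $h(g_0)>0$; because $g_0\in\operatorname{supp}(\nu)$ this forces $\int h\,d\nu>0$. Setting $f_n:=h\circ\varphi_{\ell_n}:X\to[0,1]$, the whole argument rests on combining the weak convergence hypothesis with two separate consequences of rigidity.

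The key steps, in order, would be the following. (1) Weak convergence gives $\int_X f_n\,d\mu=\int_{\overline{G}}h\,d(\varphi_{\ell_n})_{\ast}\mu\to\int h\,d\nu=:c>0$. (2) The sequence $f_n$ is asymptotically $T$-invariant: from the cocycle identity $\varphi_{\ell_n}\circ T-\varphi_{\ell_n}=\varphi\circ T^{\ell_n}-\varphi$ and the fact that rigidity forces $\varphi\circ T^{\ell_n}\to\varphi$ in measure, uniform continuity of the compactly supported $h$ yields $\|f_n\circ T-f_n\|_{L^1(\mu)}\to0$. (3) As $0\le f_n\le1$, the sequence is bounded in $L^\infty=(L^1)^{\ast}$; by (2) every weak-$\ast$ limit point is $T$-invariant, hence constant by ergodicity of $T$, and that constant equals $\lim_n\int f_n\,d\mu=c$ by (1). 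Since every subsequential weak-$\ast$ limit equals the single constant $c$, the whole sequence satisfies $f_n\to c$ weak-$\ast$, so testing against $\mathbf{1}_A\in L^1$ gives $\int_A f_n\,d\mu\to c\,\mu(A)>0$ for our fixed $A$. (4) Finally I would reinsert the factor $\mathbf{1}_A\circ T^{\ell_n}$, using that rigidity also gives $\mu(T^{-\ell_n}A\,\triangle\,A)\to0$:
\[
\Big|\int \mathbf{1}_A\,(\mathbf{1}_A\circ T^{\ell_n})\,f_n\,d\mu-\int_A f_n\,d\mu\Big|\le\mu(T^{-\ell_n}A\,\triangle\,A)\to0,
\]
whence $\int \mathbf{1}_A\,(\mathbf{1}_A\circ T^{\ell_n})\,f_n\,d\mu\to c\,\mu(A)>0$. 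Since $f_n$ is supported on $[\varphi_{\ell_n}\in\operatorname{supp}h]\subset[\varphi_{\ell_n}\in U]$, positivity of this integral for large $n$ forces $\mu(A\cap T^{-\ell_n}A\cap[\varphi_{\ell_n}\in U])>0$, giving $g_0\in\overline{\E}(\varphi)$.

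The hard part is step (3), which is exactly what copes with the universal quantifier over $A$ in the definition of essential value: a priori the mass of $[\varphi_{\ell_n}\approx g_0]$ could avoid a fixed small set $A$, so the unconditional bound $\int f_n\,d\mu\to c$ is not by itself enough. What rescues the argument is that $f_n$ is asymptotically invariant; although an ergodic $T$ may well carry nontrivial asymptotically invariant sequences, the \emph{weak-$\ast$} limit of such a sequence is nonetheless forced to be constant by ergodicity, and this is precisely what upgrades $\int f_n\,d\mu\to c$ to the correlated statement $\int_A f_n\,d\mu\to c\,\mu(A)$ for \emph{every} $A$. Two minor points would remain to be checked: the uniform-continuity estimate in (2) near $\infty$ in the case $g_0=\infty$ (handled because $h$ is continuous on the compactification $\overline{G}$ and translation by a null sequence is asymptotically negligible there), and the fact that one can indeed choose $h$ continuous with $\operatorname{supp}h\subset U$, which is immediate since $\overline{G}$ is a compact metric space. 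The decisive advantage of working with the continuous test function $h$ rather than with the indicator of a neighbourhood is that it removes any boundary difficulties in passing to the weak-$\ast$ limit.
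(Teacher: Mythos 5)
The paper offers no proof of Proposition~\ref{supp}: it is quoted from \cite{LePaVo96} and then used as a black box (for instance to derive Lemma~\ref{lem-period}), so there is no internal argument to compare yours with; what follows is an assessment of your proof on its own merits. The proof is correct, and it is the natural argument for this statement. Fixing $g_0\in{\rm supp}(\nu)$, a neighbourhood $U\ni g_0$ in $\overline{G}$ and $A\in\cb$ of positive measure, you correctly reduce everything to showing $\int 1_A\,(1_A\circ T^{\ell_n})\,f_n\,d\mu>0$ for large $n$, where $f_n=h\circ\varphi_{\ell_n}$ and $h$ is a continuous bump supported in $U$ with $h(g_0)>0$; the three ingredients (weak convergence giving $\int f_n\,d\mu\to c=\int h\,d\nu>0$, asymptotic $T$-invariance of $f_n$ via the cocycle identity $\varphi_{\ell_n}\circ T-\varphi_{\ell_n}=\varphi\circ T^{\ell_n}-\varphi$, and the weak-$\ast$/ergodicity step) fit together exactly as you describe. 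Your diagnosis of where the real content lies is also accurate: an ergodic $T$ on a non-atomic space always admits nontrivial asymptotically invariant sequences, but their weak-$\ast$ limit points are nevertheless $T$-invariant, hence constant, and this is precisely what upgrades the unconditional statement $\int f_n\,d\mu\to c$ to the correlated statement $\int_A f_n\,d\mu\to c\,\mu(A)$ for every fixed $A$, coping with the universal quantifier in the definition of essential value.

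Two assertions in your sketch would need one more line each in a full write-up, though neither is a gap. First, the claim that rigidity forces $\varphi\circ T^{\ell_n}\to\varphi$ in measure is standard but not a tautology: it follows from $\mu(T^{-\ell_n}B\,\triangle\,B)\to0$ for every measurable $B$ by partitioning $X$ according to a countable cover of $G$ by sets of small diameter and discarding a tail of small measure. Second, step (2) tacitly uses a uniformity statement: for every $\vep>0$ there is a neighbourhood $V\ni0$ in $G$ such that $|h(g)-h(g')|<\vep$ whenever $g-g'\in V$. This is what converts convergence in measure of $\varphi\circ T^{\ell_n}-\varphi$ (a statement about the group structure of $G$) into $\|f_n\circ T-f_n\|_{L^1}\to0$ (a statement about the metric of $\overline{G}$), and it holds for any $h$ continuous on $\overline{G}$ by the compactness argument you allude to: if $g_n-g_n'\to0$ in $G$ and $g_n\to\infty$ in $\overline{G}$, then $g_n'\to\infty$ as well, since otherwise a subsequence of $(g_n)$ would stay in a compact subset of $G$. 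With these two remarks supplied, the proof is complete.
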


Let us recall that all Abelian l.c.s.c.\ groups are metrizable. Let
$d$ be a metric.

\begin{Def} \label{def-period} {\rm We say that $g \in G$ is a {\it quasi-period}
of a cocycle $\varphi$ over $T$ with values in $G$, if there exist
$\delta > 0$, a rigidity sequence $(\ell_{n})$ for $T$ and a
sequence $0<\varepsilon_n \rightarrow 0$ such that
$$\mu(A_n) \ge \delta, \forall n \ge 1, {\rm \ where \ }
A_n = \{x\in X: d(\varphi_{\ell_n}(x) , g) < \varepsilon_n \}.$$
}\end{Def}

\begin{Lemma}\label{lem-period} The set of quasi-periods is included in $\E(\varphi)$.
\end{Lemma}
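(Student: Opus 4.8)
The plan is to deduce the lemma from the essential value criterion of Proposition~\ref{supp}, which is tailored for exactly this purpose. Let $g\in G$ be a quasi-period, witnessed by $\delta>0$, a rigidity sequence $(\ell_n)$ and a sequence $\varepsilon_n\downarrow 0$ with $\mu(A_n)\ge\delta$, where $A_n=\{x: d(\varphi_{\ell_n}(x),g)<\varepsilon_n\}$. Writing $\nu_n:=(\varphi_{\ell_n})_\ast\mu$ for the push-forward probability measures on $\overline G$, the hypothesis says precisely that $\nu_n$ charges the shrinking ball $B(g,\varepsilon_n)$ with mass at least $\delta$ for every $n$.

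First I would pass to a convergent subsequence. Since $\overline G$ is a compact metrizable space (the one-point compactification of the l.c.s.c.\ group $G$), the space of Borel probability measures on $\overline G$ is weak-$\ast$ sequentially compact; hence there is a subsequence $(\ell_{n_k})$ and a probability measure $\nu$ on $\overline G$ with $\nu_{n_k}\to\nu$ weakly. Note that $(\ell_{n_k})$ is again a rigidity sequence for $T$ (strong convergence $T^{\ell_n}\to I$ passes to any subsequence), so Proposition~\ref{supp} applies and gives ${\rm supp}(\nu)\subset\overline{\E}(\varphi)$.

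The key step is to show that $\nu$ charges the single point $g$. Fix $\varepsilon>0$. For $k$ large enough we have $\varepsilon_{n_k}<\varepsilon$, hence $A_{n_k}\subset\varphi_{\ell_{n_k}}^{-1}(\overline{B(g,\varepsilon)})$ and therefore $\nu_{n_k}(\overline{B(g,\varepsilon)})\ge\mu(A_{n_k})\ge\delta$. The Portmanteau lemma for the closed set $\overline{B(g,\varepsilon)}$ yields $\nu(\overline{B(g,\varepsilon)})\ge\limsup_k\nu_{n_k}(\overline{B(g,\varepsilon)})\ge\delta$. Letting $\varepsilon\downarrow 0$ and using continuity of $\nu$ from above along $\overline{B(g,\varepsilon)}\downarrow\{g\}$, I obtain $\nu(\{g\})\ge\delta>0$, so in particular $g\in{\rm supp}(\nu)$.

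Combining the two steps, $g\in{\rm supp}(\nu)\subset\overline{\E}(\varphi)$; since $g$ is by definition a genuine element of $G$ (not $\infty$), we conclude $g\in\overline{\E}(\varphi)\cap G=\E(\varphi)$, as required. The only genuinely delicate point is the passage to the weak limit: the quasi-period hypothesis controls only the masses $\nu_n(B(g,\varepsilon_n))$ through a uniform lower bound, so one cannot argue directly with a fixed positive-measure set $A$ (the sets $A_n$ may drift away from any prescribed $A$). Instead one must extract a subsequential weak limit and verify, via the closed-set Portmanteau inequality together with continuity of measure from above, that this limit places positive mass on $g$ itself.
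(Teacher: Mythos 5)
Your proof is correct and follows essentially the same route as the paper: extract a weak-$\ast$ limit $\nu$ of $(\varphi_{\ell_n})_\ast\mu$ on $\overline{G}$, use the uniform bound $\mu(A_n)\ge\delta$ together with the closed-set Portmanteau inequality to place $g$ in ${\rm supp}(\nu)$, and conclude via Proposition~\ref{supp} that $g\in\overline{\E}(\varphi)\cap G=\E(\varphi)$. The only (harmless) difference is that you prove the slightly stronger fact $\nu(\{g\})\ge\delta$, whereas the paper stops at $\nu(U)\ge\delta$ for every neighborhood $U\ni g$, which already suffices for support membership.
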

\begin{proof} \ With no loss of generality we can assume that
$(\varphi_{\ell_{n}})_{\ast}\mu\to\nu$ where $\nu$ is a probability
measure on $\overline{G}$. In view of Proposition \ref{supp} it
suffices to show that a quasi-period $g$ is in the topological
support of $\nu$. Take $U$ a neighborhood of $g$, and select a
smaller neighborhood $g\in V\subset U$ so that $\overline{V}\subset
U$. We have $\nu(U)\geq
\limsup(\varphi_{\ell_{n}})_{\ast}(\mu)(V)=\limsup
\mu(\varphi_{\ell_{n}}^{-1}(V))\geq \limsup \mu(A_{n})\geq \delta$.
\end{proof}

The following ``lifting essential values'' lemma can be applied when
$T$ is an irrational rotation by $\alpha$, $\varphi$ below is
$\R$-valued, centered and of bounded variation (see~(\ref{f_8})),
dealing with different subsequences of the sequence $(q_n)$ of
denominators of $\alpha$.

\begin{Lemma} \label{quasi} Assume that $T$ is ergodic and let
$(\ell_n)$ be a rigidity sequence of $T$. Assume that $\varphi:X\to
H$ is a cocycle such that there exists a compact neighborhood
$C\subset H$ of $0\in H$ for which $\varphi_{\ell_n}\in C$
eventually. Let $\psi:X\to G$ be a cocycle such that
$(\psi_{\ell_n})_\ast(\mu)\to \kappa$ with $\kappa$ a probability
measure on $\overline{G}$. Assume that \beq\label{produ} 0\neq
g_0\in {\rm supp}(\kappa)\cap G.\eeq Then there exists $h_0\in H$
such that $(h_0,g_0)\in \ce(\Phi)$, where $\Phi:=(\varphi,\psi):X\to
H\times G$.
\end{Lemma}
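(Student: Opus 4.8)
The plan is to extract a subsequence of $(\ell_n)$ along which the image measures $(\Phi_{\ell_n})_\ast\mu$ converge, to locate a point $(h_0,g_0)$ lying above $g_0$ in the support of the limit, and then to invoke the essential-value criterion of Proposition~\ref{supp} applied to the cocycle $\Phi:X\to H\times G$ (which takes values in the Abelian l.c.s.c.\ group $H\times G$).

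First I would exploit the hypothesis on $\varphi$. Since $\varphi_{\ell_n}\in C$ eventually with $C\subset H$ compact, for $n$ large the measures $(\Phi_{\ell_n})_\ast\mu$ are all carried by the compact metric space $C\times\overline{G}$. By weak-$\ast$ compactness of the probability measures on this compact space, I can pass to a subsequence $(\ell_{n_k})$ so that $(\Phi_{\ell_{n_k}})_\ast\mu\to\lambda$ weakly on $C\times\overline{G}$, with $\lambda$ a probability measure; being a subsequence of a rigidity sequence, $(\ell_{n_k})$ is again a rigidity sequence for $T$. Next I would identify the support of $\lambda$ above $g_0$. Let $\pi:C\times\overline{G}\to\overline{G}$ be the projection onto the second coordinate; since this coordinate is exactly $\psi$, we get $\pi_\ast\lambda=\lim_k(\psi_{\ell_{n_k}})_\ast\mu=\kappa$. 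Because the factor $C$ is compact, $\pi$ is a closed map, so $\pi({\rm supp}\,\lambda)$ is closed and hence equals ${\rm supp}(\pi_\ast\lambda)={\rm supp}\,\kappa$. As $g_0\in{\rm supp}\,\kappa\cap G$, there is therefore $h_0\in C$ with $(h_0,g_0)\in{\rm supp}\,\lambda$, and since $g_0\in G$ this is a finite point of $H\times G$.

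Finally I would transfer this to the one-point compactification so as to apply Proposition~\ref{supp}. Consider the map $i:C\times\overline{G}\to\overline{H\times G}$ that is the identity on $C\times G$ and sends $C\times\{\infty\}$ to the point at infinity of $\overline{H\times G}$. Compactness of $C$ makes $i$ continuous: if $h_m\in C$ and $g_m\to\infty$ in $\overline{G}$, then $(h_m,g_m)$ leaves every compact subset of $H\times G$. Because $i\circ\Phi_{\ell_{n_k}}=\Phi_{\ell_{n_k}}$ (the $H$-part sits in $C$, the $G$-part in $G$), the measure $\nu:=i_\ast\lambda$ satisfies $(\Phi_{\ell_{n_k}})_\ast\mu\to\nu$ weakly on $\overline{H\times G}$, and by continuity of $i$ we have $(h_0,g_0)=i(h_0,g_0)\in i({\rm supp}\,\lambda)\subset{\rm supp}\,\nu$. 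Proposition~\ref{supp}, applied to $\Phi$ along the rigidity sequence $(\ell_{n_k})$, then gives ${\rm supp}\,\nu\subset\overline{\E}(\Phi)$, so $(h_0,g_0)\in\overline{\E}(\Phi)$; being a finite point it lies in $\ce(\Phi)=\overline{\E}(\Phi)\cap(H\times G)$, which is the assertion.

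I expect the main obstacle to be the bookkeeping between the two compactifications $C\times\overline{G}$ and $\overline{H\times G}$. The confinement of the $H$-component to the fixed compact set $C$ is precisely what prevents mass from escaping in the $H$-direction, making the gluing map $i$ continuous, and it is also what makes $\pi$ a closed map so that ${\rm supp}\,\kappa$ is genuinely the image $\pi({\rm supp}\,\lambda)$ rather than merely its closure; this is what allows one to pick a single $h_0$ sitting over $g_0$. Verifying these two topological points is the crux; once they are in place the conclusion is immediate from Proposition~\ref{supp}. (I avoid routing the argument through the quasi-period Lemma~\ref{lem-period}, since producing a uniform lower bound $\delta$ for shrinking balls around $(h_0,g_0)$ would require that $\lambda$ charge that point, which we cannot assume; Proposition~\ref{supp} handles the support directly.)
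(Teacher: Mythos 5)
Your proof is correct and follows essentially the same route as the paper's: pass to a subsequence along which the joint distributions $(\Phi_{\ell_n})_\ast\mu$ converge to a limit measure carried by $C\times\overline{G}$, locate a point $(h_0,g_0)$ of its support lying above $g_0$, and conclude via Proposition~\ref{supp}. The only differences are in execution: where you find $(h_0,g_0)$ by the closed-projection argument, the paper constructs it by hand (shrinking neighborhoods $V_n\ni g_0$ with $\kappa(V_n)>0$, a compactness argument producing support points $(c_n,g_n)$, and extraction of a convergent subsequence), and your explicit gluing map $i$ between the compactifications $C\times\overline{G}$ and $\overline{H\times G}$ makes rigorous a step the paper leaves implicit in ``so the result follows.''
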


\begin{proof}
Note first that in view of Proposition~\ref{supp},
$g_0\in\ce(\psi)$. By passing to a subsequence if necessary, we can
assume that the distributions of $\varphi_{\ell_n}$ and
$\Phi_{\ell_n}$ converge, that is
$$(\varphi_{\ell_n})_\ast(\mu)\to\nu,\;\;(\Phi_{\ell_n})_\ast(\mu)\to\rho,$$
where $\nu$ is a probability measure on $\overline{H}$, but in fact
(by our standing assumption) which is concentrated on $C$, whence
$\rho$ is a probability measure concentrated on
$C\times\overline{G}$. Moreover, \beq\label{produ1} \mbox{the
projections of $\rho$ on $C$ and $\overline{G}$ are equal to $\nu$
and $\kappa$ respectively.}\eeq

Using~(\ref{produ}), for each $n\geq 1$ select an open neighborhood
$G\supset V_n\ni g_0$ so that $\overline{V}_n$ is compact, ${\rm
diam}\,\overline{V}_n<1/n$, $\kappa(V_n)>0$ and $V_{n+1}\subset
V_n$. In view of~(\ref{produ1}), $\rho(C\times\overline{V}_n)>0$.
Since $C\times \overline{V}_n$ is compact, there is $(c_n,g_n)\in
C\times\overline{V}_n$ such that $(c_n,g_n)\in{\rm supp}(\rho)$ (if
no such a point exists, each point of $C\times \overline{V}_n$ has a
neighborhood which is of measure $\rho$ zero, a finite union of such
neighborhoods must then cover the set $C\times \overline{V}_n$, a
contradiction).

In this way we obtain a sequence $(c_n,g_n)$, $n\geq1$, of points
which are in ${\rm supp}(\rho)\cap C\times\overline{V}_1$ and from
which we can choose a converging subsequence $(c_{n_k},g_{n_k})$.
Moreover, by our assumption on the diameters of $V_n$,
$(c_{n_k},g_{n_k})\to (c,g_0)$, so the result follows.
\end{proof}

In particular, by the proof of Lemma~\ref{lem-period},
Lemma~\ref{quasi} will apply when $g_0 \in G$ is an essential value
of $\psi$ obtained as a quasi-period along a subsequence of the
sequence $(q_n)$ of denominators of $\alpha$.

\subsection{Essential values of cocycles taking values
in $\R^{d}$}

In the lemmas of this subsection, $\Phi$ will stand for a cocycle
with values in $\R^d$.

\begin{Lemma} \label{changeBase}
Let $\theta = (\theta_1, ..., \theta_d)\in\R^d$ be a non zero
essential value of $\Phi$. Then there is a change of basis in $\R^d$
given by a matrix $M$ such that the vector $(1, 0, ..., 0)$ is an
essential value of the cocycle $M\Phi$. If $\theta$ is rational,
then $M$ can be taken rational.
\end{Lemma}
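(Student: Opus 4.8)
The plan is to exhibit the matrix $M$ explicitly as a change of basis that sends the essential value $\theta$ to a scalar multiple of the first standard basis vector, and then invoke Lemma~\ref{hom} to transfer the essential value. Since $\theta = (\theta_1, \ldots, \theta_d)$ is nonzero, I would choose an invertible linear map $M : \R^d \to \R^d$ with the property that $M\theta$ is a positive multiple of $e_1 = (1, 0, \ldots, 0)$. Concretely, one may pick any basis $\{v_1, \ldots, v_d\}$ of $\R^d$ with $v_1 = \theta$, and let $M$ be the inverse of the matrix whose columns are the $v_i$; then $M\theta = e_1$ up to the normalization. A clean way to organize this is to complete $\theta/\|\theta\|$ to an orthonormal basis so that $M$ may even be taken orthogonal, but the only property we actually need is invertibility together with $M\theta \in \R_{>0}\, e_1$.

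Once $M$ is fixed, the essential-value transfer is immediate from Lemma~\ref{hom}: since $M$ is a (continuous) group isomorphism of $\R^d$, we have $M\E(\Phi) = \E(M\Phi)$. As $\theta \in \E(\Phi)$, it follows that $M\theta \in \E(M\Phi)$, and because $\E(M\Phi)$ is a closed subgroup of $\R^d$ hence closed under scalar multiplication by integers and, being a subgroup containing $M\theta = \lambda e_1$ for some $\lambda > 0$, one checks that $e_1 = (1, 0, \ldots, 0)$ lies in it after rescaling $M$ by $1/\lambda$. More cleanly, I would simply absorb the normalization into $M$ from the start: rescale the first row of $M$ so that $M\theta$ equals exactly $(1, 0, \ldots, 0)$, and then $(1, 0, \ldots, 0) = M\theta \in \E(M\Phi)$ directly.

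For the rational refinement, suppose $\theta \in \Q^d \setminus \{0\}$. Then I can complete $\theta$ to a basis of $\R^d$ using vectors with rational entries (for instance, $\theta$ together with $d-1$ of the standard basis vectors $e_j$ chosen so that the resulting family is linearly independent, which is always possible since $\theta \neq 0$). The matrix with these rational columns is invertible over $\Q$, and its inverse $M$ therefore also has rational entries. Clearing denominators if one prefers an integer matrix does not affect the argument, since any nonzero rational multiple of an essential value issue is handled by the subgroup structure; but with the inverse-matrix choice $M\theta = e_1$ exactly, so no rescaling is needed and $M$ is manifestly rational.

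The argument is essentially routine, so there is no serious obstacle; the one point requiring a little care is the normalization, namely ensuring that $M$ is arranged so that $M\theta$ equals the vector $(1, 0, \ldots, 0)$ on the nose rather than merely a positive multiple of it, and in the rational case that this normalization can be achieved while keeping $M$ rational. Both are handled by the explicit inverse-of-a-basis-matrix construction described above, which delivers $M\theta = e_1$ by definition and preserves rationality whenever $\theta$ is rational.
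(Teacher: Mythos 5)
Your proposal is correct and follows essentially the same route as the paper: a change of basis taking $\theta$ to the first basis direction (rational when $\theta$ is rational), normalized so that $M\theta = (1,0,\dots,0)$ exactly, with the transfer of essential values supplied by Lemma~\ref{hom}. The only cosmetic difference is that the paper writes $M$ as a composition $M_2M_1$ (basis change, then rescaling) while you build it in one step as the inverse of the basis matrix; the content is identical.
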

\begin{proof} There is a change of basis in $\R^d$ with $\theta$
as the first vector of the new basis. This can be done via a matrix
$M_1$ with rational coefficients if $\theta \in \Z^d$. The cocycle
$\Phi'= M_1 \Phi$ has an essential value of the form $(\theta_1, 0,
..., 0)$, where $\theta_1$ is a positive real (a positive integer if
$\theta$ is in $\Z^d$, for an adapted choice of $M_1$). By applying
a linear isomorphism $M_2$ (rational in the $\theta$ rational case)
we get that $\Phi^{\prime\prime}= M_2 M_1 \Phi$ has an essential
value of the form $(1, 0, ..., 0)$. \end{proof}

\begin{Lemma} \label{decomp1} There exist a linear isomorphism $M:\R^d\to\R^d$
and integers $d_0,d_1,d_2\geq0$ such that if we set $H_i=\R^{d_i}$,
$i=0,1,2$, then
$$\R^d=H_0\times H_1\times H_2,\;\; M\Phi=(\psi_0,\psi_1,\psi_2)$$
with $\psi_i:X\to H_i$, $i=0,1,2$, and $\E(M\Phi)=\{0\}\times
H_1\times \Gamma_2$, with $\Gamma_2$ a discrete subgroup of $H_2$
such that $H_2 / \Gamma_2$ is compact. If $\Phi$ is a coboundary,
then $d_1 = d_2= 0$
\end{Lemma}
\begin{proof} The group $\E(\Phi)$ is a closed subgroup of $\R^d$,
hence there are linearly independent vectors $v_1,\ldots,v_{d_1}$,
$w_1,\ldots,w_{d_2}$ in $\R^d$ such that
$$\E(\Phi)=\{s_1v_1+\ldots+s_{d_1}v_{d_1}+t_1w_1+\ldots+t_{d_2}w_{d_2}:\:
s_j\in\R,\;t_k\in\Z\}.$$ Select $y_1,\ldots,y_{d_0}\in\R^d$ so that
together with previously chosen $v_j$ and $w_k$ we obtain a basis of
$\R^d$. Then define a linear isomorphism $M$ of $\R^d$ by setting
$$
M(y_i)=e_i,\;M(v_j)=e_{d_0+j},\;M(w_k)=e_{d_0+d_1+k},$$ where
$e_1,\ldots,e_d$ is the standard basis of $\R^d$. Since
$\E(M\Phi)=M\E(\Phi)$, we obtain $\E(M\Phi)=\{0\}\times H_1\times
\Gamma_2$ as required and $M\Phi=(\psi_0,\psi_1,\psi_2)$.
\end{proof}

\vskip 3mm
\begin{Cor}\label{dimension2} Let us consider the case $d=2$.
Let $\Phi=(\varphi^1,\varphi^2):X\to\R^2$ be a cocycle such that
$\ce(\Phi)\neq\{0\}$. Then \beq\label{dim22}\mbox{$\Phi$ is regular
if and only if $a\varphi^1+b\varphi^2:X\to\R$ is regular for each
$a,b\in\R$.}\eeq
\end{Cor}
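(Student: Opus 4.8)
The plan is to treat the two implications separately: the forward one is immediate, and the converse rests on the normal form provided by Lemma~\ref{decomp1}.

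For the direct implication, if $\Phi$ is regular then for any $a,b\in\R$ the map $L:\R^2\to\R$, $L(x,y)=ax+by$, is a continuous homomorphism, so $L\Phi=a\varphi^1+b\varphi^2$ is regular by Lemma~\ref{nonregIm}. For the converse I assume every scalar combination $a\varphi^1+b\varphi^2$ is regular and first normalise $\Phi$. By Lemma~\ref{decomp1} there is a linear isomorphism $M$ with $M\Phi=(\psi_0,\psi_1,\psi_2)$ and $\E(M\Phi)=\{0\}\times H_1\times\Gamma_2$, where $H_i=\R^{d_i}$, $d_0+d_1+d_2=2$, and $H_2/\Gamma_2$ is compact. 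Since $M$ is an isomorphism, $\Phi$ is regular iff $M\Phi$ is (Lemma~\ref{nonregIm} applied to $M$ and to $M^{-1}$), and, because $L\mapsto LM^{-1}$ is a bijection of the scalar functionals, every scalar combination of $M\Phi$ is again regular by hypothesis; moreover $\E(M\Phi)=M\E(\Phi)\neq\{0\}$ forces $d_0\le1$. Replacing $\Phi$ by $M\Phi$, I pass to the quotient by $H:=\E(\Phi)$: by Lemma~\ref{quotient} and~(\ref{aaaa}) the cocycle $\Phi^\ast:=\Phi+H$ takes values in $\R^2/H=H_0\times(H_2/\Gamma_2)$, satisfies $\E(\Phi^\ast)=\{0\}$, and is regular iff $\Phi$ is. Explicitly $\Phi^\ast=(\psi_0,[\psi_2])$, where $[\psi_2]=\psi_2+\Gamma_2$ takes values in the compact torus $H_2/\Gamma_2$ and the $\psi_1$-component has been absorbed.

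The heart of the argument is to show that $\Phi^\ast$ is a coboundary. First I claim $\E(\psi_0)=\{0\}$. Indeed, $\psi_0$ is the $H_0$-component of $\Phi^\ast$ while $[\psi_2]$ is its \emph{compact} component, so any nonzero essential value $c$ of $\psi_0$ would lift to an essential value $(c,s)$ of $\Phi^\ast$ for some $s\in H_2/\Gamma_2$ (Lemma~\ref{quasi}, with the compact coordinate $[\psi_2]$ in the role of the cocycle confined to a compact neighbourhood), contradicting $\E(\Phi^\ast)=\{0\}$. Since $d_0\le1$, the cocycle $\psi_0$ is a single scalar combination of $\varphi^1,\varphi^2$ (and is absent when $d_0=0$), hence regular by hypothesis; being regular with $\E(\psi_0)=\{0\}$, it is a coboundary. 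Subtracting its transfer function in the $H_0$-coordinate, $\Phi^\ast$ is cohomologous to $(0,[\psi_2])$, whence $\E([\psi_2])=\{0\}$ by Remark~\ref{dodana}. A cocycle into the compact group $H_2/\Gamma_2$ with no nontrivial essential value is a coboundary, so $(0,[\psi_2])$, and therefore $\Phi^\ast$, is a coboundary; in particular $\Phi^\ast$ is regular, and hence so is $\Phi$.

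The only delicate point is the lifting step giving $\E(\psi_0)=\{0\}$, where an essential value of the \emph{noncompact} coordinate $\psi_0$ must be promoted to an essential value of the pair $\Phi^\ast$. This is exactly where compactness of $[\psi_2]$ is used: the distribution of $[\psi_2]_{\ell_n}$ along a sequence realising $c$ as a quasi-period remains tight, so a limit point $s$ yields $(c,s)\in\E(\Phi^\ast)$; in the present setting (an irrational rotation) the essential values of $\psi_0$ are realised as quasi-periods along subsequences of $(q_n)$, as recorded in the remark following Lemma~\ref{quasi}. Everything else is bookkeeping around the structure theorem, and the hypothesis on scalar combinations is used precisely once — to upgrade the regularity of $\psi_0$ to its being a coboundary — which succeeds only because $d_0\le1$ in dimension two.
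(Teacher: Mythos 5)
Your proof has the same skeleton as the paper's: necessity from Lemma~\ref{nonregIm}, the normal form of Lemma~\ref{decomp1}, the hypothesis on scalar combinations used exactly once to turn the $H_0$-coordinate into a coboundary, and Remark~\ref{dodana} plus the compact-group coboundary criterion to finish (passing first to the quotient $\Phi^\ast=\Phi+\ce(\Phi)$ via Lemma~\ref{quotient} is a cosmetic variation). The genuine gap is in the step you yourself call delicate, $\ce(\psi_0)=\{0\}$. Lemma~\ref{quasi} does not deliver it. First, the Corollary is stated for an arbitrary ergodic automorphism $T$ of $\xbm$ (the setting of Sections 2.2--2.3), not for an irrational rotation: there is no sequence $(q_n)$, and $T$ may well be mixing, in which case it admits no rigidity sequence at all and Lemma~\ref{quasi} is vacuous. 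Second, even over an irrational rotation, Lemma~\ref{quasi} requires the nonzero essential value $c$ to lie in the support of a limit distribution of $(\psi_0)_{\ell_n}$ along some rigidity sequence; being an element of $\ce(\psi_0)$ gives no such distributional realization, since an essential value only requires $\mu(A\cap T^{-N}A\cap[\varphi_N\in U])>0$ for some $N$ depending on $A$. The remark following Lemma~\ref{quasi} is conditional --- it says the lemma applies \emph{when} the essential value has been obtained as a quasi-period along a subsequence of $(q_n)$ --- it does not assert that every essential value arises this way, and this can genuinely fail: in the situation of Proposition~\ref{qn00} one has $\Phi_{q_n}\to 0$ in measure although $\ce(\Phi)\neq\{0\}$, the nontrivial essential values there being produced only along specially constructed times $\ell_kq_{n_k}$, not read off from the behaviour at times $q_n$.

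It is worth localizing where the gap actually bites. If $d_1=1$, i.e.\ $\ce(M\Phi)=\{0\}\times H_1$ is a line, then no lifting is needed: quotienting by $H=\ce(M\Phi)$ collapses the second coordinate entirely, so $\Phi^\ast$ \emph{is} $\psi_0$ and $\ce(\psi_0)=\{0\}$ follows from~(\ref{aaaa}). The problematic case is $d_2=1$, where $\ce(M\Phi)=\{0\}\times\Gamma_2$ is a rank-one lattice and $[\psi_2]$ is a genuine circle-valued coordinate: then $\ce(\Phi^\ast)=\{0\}$ does not formally imply $\ce(\psi_0)=\{0\}$, and what is needed is the statement that every essential value of $\psi_0$ lifts to an essential value of $(\psi_0,[\psi_2])$ because the second coordinate takes values in a compact group. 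That statement is true, but it requires an argument on the Mackey action (ergodic decomposition of the skew product and transitivity of the compact group on the fibres), which is not contained in Lemma~\ref{quasi}. To be fair, the paper's own proof writes ``hence $\ce(\psi^0)=\{0\}$'' at exactly this point without further justification, so you have correctly isolated the one real subtlety of the argument; but the justification you propose for it does not work, and as written your proof is incomplete precisely in the case $\ce(\Phi)$ discrete.
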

\begin{proof} In view of Lemma~\ref{nonregIm} we only need to prove
sufficiency. Suppose $\Phi$ is not regular. In view
of~Lemma~\ref{decomp1} we obtain a linear isomorphism
$M:\R^2\to\R^2$ such that $M\Phi=(\psi^0,\psi^i)$ with $\psi^0:X\to
H_0$, $\psi^i:X\to H_i$, $i$ equals either $1$ or $2$ and
$H_0\neq\{0\}$ by non-regularity of $\Phi$ and $H_i\neq\{0\}$ since
$\ce(\Phi)\neq\{0\}$ by hypothesis. Hence $\ce(\psi^0)=\{0\}$ and
there are $a$ and $b$ such that $\psi^0=a\varphi^1+b\varphi^2$. But
$a\varphi^1+b\varphi^2$ is, by assumption, regular, so $\psi^0$ must
be a coboundary. Hence $(\psi^0, \psi^i)$ is cohomologous to $(0,
\psi^i)$ and it now follows from Remark~\ref{dodana} that
$(\psi^0,\psi^i)$ is regular, a contradiction.
\end{proof}

\begin{Lemma} \label{decomp2}  Let $\Phi:X\to\R^d$ be a
recurrent cocycle and let $M:\R^d\to\R^d$ be a linear isomorphism of
$\R^d$ yielding the assertions of the previous lemma. Assume
additionally that the quotient cocycle $\Phi/\E(\Phi)$ is constant.
Then $\psi_0=0$. Moreover, $\Phi$ is regular.
\end{Lemma}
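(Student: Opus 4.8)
The plan is to transport everything through the isomorphism $M$ of Lemma~\ref{decomp1}, read off what constancy of the quotient cocycle says about the individual components, and then use recurrence to kill $\psi_0$; regularity will follow automatically from the compact‑quotient criterion. Since $M$ is a linear isomorphism, Lemma~\ref{hom} gives $\E(M\Phi)=M\E(\Phi)$, so $M$ induces an isomorphism $\R^d/\E(\Phi)\to\R^d/\E(M\Phi)$ intertwining the two quotient cocycles. Hence $M\Phi/\E(M\Phi)$ is constant as well. Writing $\R^d/\E(M\Phi)\cong H_0\times(H_2/\Gamma_2)$ (the factor $H_1$ is annihilated because $\{0\}\times H_1\times\{0\}\subset\E(M\Phi)$), the quotient cocycle is $x\mapsto(\psi_0(x),[\psi_2(x)])$, where $[\,\cdot\,]$ denotes the class modulo $\Gamma_2$. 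Constancy therefore forces $\psi_0(x)=c_0$ to be a constant cocycle (and $[\psi_2(x)]$ constant, which I will not need).

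Next I would show $c_0=0$ using recurrence. Because $M$ is invertible the norms $\|\Phi_n\|$ and $\|(M\Phi)_n\|$ are comparable, so $M\Phi$ is recurrent; and since $\|(\psi_0)_n\|\le\|\pi_0\|\,\|(M\Phi)_n\|$ for the coordinate projection $\pi_0:\R^d\to H_0$, the cocycle $\psi_0$ is recurrent too, i.e.\ $\liminf_n\|(\psi_0)_n\|<\infty$ almost everywhere. But $\psi_0$ is the constant cocycle $c_0$, so $(\psi_0)_n=n\,c_0$ and $\|(\psi_0)_n\|=|n|\,\|c_0\|\to+\infty$ unless $c_0=0$. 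Hence $\psi_0=0$, which is the first assertion.

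Finally, for regularity, having $\psi_0=0$ I would write $M\Phi=(0,\tilde\psi)$ with $\tilde\psi=(\psi_1,\psi_2):X\to H_1\times H_2$. By Remark~\ref{dodana} we have $\E(M\Phi)=\{0\}\times\E(\tilde\psi)$, whence $\E(\tilde\psi)=H_1\times\Gamma_2$; the quotient $(H_1\times H_2)/(H_1\times\Gamma_2)\cong H_2/\Gamma_2$ is compact, so $\tilde\psi$ is regular by the criterion recalled after Lemma~\ref{quotient} (a cocycle whose group of essential values has a compact quotient is regular). Again by Remark~\ref{dodana}, $M\Phi$ is regular, and applying Lemma~\ref{nonregIm} to the isomorphism $M^{-1}$ yields regularity of $\Phi=M^{-1}(M\Phi)$.

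The only genuinely delicate points are the correct interpretation of ``$\Phi/\E(\Phi)$ is constant'' (namely that its $H_0$‑component is exactly the constant cocycle $\psi_0$) and the passage of recurrence from $M\Phi$ to its projection $\psi_0$; once $\psi_0=0$ is established the regularity is pure bookkeeping with the decomposition of Lemma~\ref{decomp1} and the compact group $H_2/\Gamma_2$. I expect the recurrence‑to‑projection step to be the main thing to state carefully, since recurrence is not in general inherited by linear images, but it is inherited by coordinate projections precisely because they do not increase norms.
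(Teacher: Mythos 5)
Your proposal is correct and follows essentially the same route as the paper's proof: transport the constancy hypothesis through $M$ using $\E(M\Phi)=M\E(\Phi)$, conclude that $\psi_0$ is a constant, use recurrence of $M\Phi$ (hence of its projection $\psi_0$) to force that constant to be $0$, and then obtain regularity from Remark~\ref{dodana} together with the compact-quotient criterion, pulling back via $M^{-1}$ by Lemma~\ref{nonregIm}. One tiny quibble: your closing caution that ``recurrence is not in general inherited by linear images'' is unnecessary here, since for cocycles with values in $\R^d$ recurrence in the sense of $\liminf_n\|\varphi_n\|<\infty$ passes through \emph{any} continuous linear map (not just norm-nonincreasing projections), but this does not affect the argument.
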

\begin{proof} Since $\E(M\Phi)=M \E(\Phi)=\{0\}\times H_1\times \Gamma_2$,
we have
$$(\psi_0(x),\psi_1(x),\psi_2(x))/\{0\}\times H_1\times
\Gamma_2=const.$$ It follows that there is a constant $b\in\R^{d_0}$
such that $\psi_0=b$. However, $M\Phi$ is recurrent as $\Phi$ is
recurrent and therefore $\psi_0$ is also recurrent. It follows that
$b=0$. Now regularity follows from Remark~\ref{dodana} since
$H_1\times \Gamma_2$ has a compact quotient in $H_1\times H_2$.
\end{proof}

An example of a situation described by the previous lemma is the
following: let $\psi$ be an ergodic step cocycle with values in $\Z$
over an irrational rotation by $\alpha \in (0,1)$. If we modify
$\psi$ by $1_{[0, \alpha)} - \alpha$ which is a coboundary, then for
$\varphi := \psi + 1_{[0, \alpha)} - \alpha$ we have $\E(\varphi) =
\E(\psi) =\Z$; here $\Gamma_2 = \Z$ and $\varphi \text{ mod }
\E(\varphi) = -\alpha$.

\section{Step cocycles over an irrational rotation}\label{nowa}

In this section, we study the regularity of a step $\R^d$-valued
cocycle $\Phi=(\varphi^1,\ldots,\varphi^d)$ over an irrational
rotation $T: x \to x + \alpha$. For such a cocycle the coordinate
$\R$-valued cocycles $\varphi^j$ are integrable and we will
constantly assume that $\int_0^1\varphi^j\,d\mu=0$ with $\mu=m_{\T}$
the Lebesgue measure on $\T^1$, for $j=1,\ldots,d$.

\vskip 3mm {\it Representations of step cocycles}

The coordinates of $\Phi=(\varphi^1,\ldots,\varphi^d)$ can be
(uniquely) represented as follows:
\begin{equation}
\varphi^j(x) = \sum_i t_{i,j} \,(1_{I_{i,j}}(x) - \mu(I_{i,j})),
\label{formCocyGen0}
\end{equation}
where, for $j=1,\ldots,d$, $\{I_{i,j}\}$ is a finite family of
disjoint intervals of $[0,1)$ (covering $[0,1)$ and maximal on which
$\varphi^j$ is constant) and $t_{i,j}\in\R$. Clearly, when $d\geq1$
is fixed, the family of step cocycles forms a linear space over
$\R$.

Setting $\beta_{i,j} = \mu(I_{i,j})$ and $\psi^{i,j}= 1_{I_{i,j}} -
\beta_{i,j}$, we have $\psi^{i,j}_n(x) = \sum_{k=0}^{n-1}
1_{I_{i,j}}(x+k \alpha) - n \beta_{i,j}$; hence the cocycle
$\varphi^j_n$ can be written in the following form:
\begin{equation}
\varphi^j_n(x) = \sum_i t_{i,j} \, \psi^{i,j}_n(x) = \sum_i t_{i,j}
\,(u^{i,j}_{(n)}(x) - \{n \beta_{i,j}\}), \label{formCocyGen}
\end{equation}
with the notation (which is not a cocycle expression)
\begin{equation}\label{11a} u^{i,j}_{(n)}(x) := \psi^{i,j}_n(x)
+\{n\beta_{i,j}\}=\sum_{k=0}^{n-1} 1_{I_{i,j}}(x+k
\alpha)-[n\beta_{i,j}]\in \Z.\end{equation}

\begin{Remark} \label{notAlpha} {\it Without loss of generality, we can assume
that the difference between any two discontinuity points of the
cocycle $\Phi$ is never a multiple of $\alpha$ (modulo~1).} Indeed,
if $\beta$ and $\beta'$ are two discontinuity points of a component
of $\Phi$ such that $\beta' - \beta \in \Z \alpha + \Z$, we can
suppress one of them by adding to $\Phi$ a coboundary, without
changing the ergodic properties of $\Phi$ (we use the fact that
$1_{[\beta, \beta')}(x) - (\beta' - \beta)$ is
coboundary\footnote{Indeed, we have
$1_{[1-\alpha,1)}(x)-\alpha=j(x)-j(x+\alpha)$ with $j(x)=\{x\}$,
then for integers $k,s$
\begin{eqnarray*} &&1_{[1-\{k\alpha+s\}, 1)}(x)-
\{k\alpha+s\}=1_{[1-\{k\alpha\}, 1)}(x)- \{k\alpha\}
=j(x)-j(x+k\alpha)=j_k(x)-j_k(x+\alpha). \end{eqnarray*} The general
case is obtained using the obvious fact that other rotations commute
with $Tx=x+\alpha$.}). In particular, after modification, the
lengths $\mu(I_{i,j})$ in the representation of the new ccocycle are
not in $\Z\alpha + \Z$.
\end{Remark}

{\it Rational step cocycles} \ Assume that $\varphi:\T\to\R$ is a
zero mean step cocycle with its unique
representation~(\ref{formCocyGen0}) of the form
\begin{equation}\label{rat0}
\varphi=\sum_{i=1}^mt_i(1_{I_i}-\mu(I_i)).\end{equation} We say that
$\varphi$ is {\em rational} if there are $c_i\in\Q$, $i=1,...,m$ and
$\beta\in\R$ such that
\begin{equation}\label{rat1}
\varphi=\sum_{i=1}^m c_i1_{I_i}-\beta.\end{equation}

\begin{Lemma}\label{rat2} Assume that $\varphi:\T\to\R$ is a
(zero-mean) step cocycle. The following conditions are equivalent:

(i) $\varphi$ is rational.

(ii) There exists $w\in\R$ such that in the unique
representation~(\ref{rat0}) of $\varphi$ we have $t_i\in w+\Q$ for
$i=1,...,m$.

(iii) $\varphi$ takes values in a coset of $\Q$.

In particular, the family of rational cocycles is a linear space
over $\Q$.
\end{Lemma}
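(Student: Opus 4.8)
The plan is to prove the cycle of implications $(iii)\Rightarrow(ii)\Rightarrow(i)\Rightarrow(iii)$, since $(i)\Rightarrow(iii)$ is essentially immediate. Throughout I work with the \emph{unique} representation~(\ref{rat0}), where the intervals $I_i$ are the maximal intervals on which $\varphi$ is constant; the uniqueness of this representation is the key structural fact I will exploit repeatedly.

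First I would establish $(i)\Rightarrow(iii)$, which is the easy direction. If $\varphi=\sum_i c_i 1_{I_i}-\beta$ with $c_i\in\Q$, then on each interval $I_i$ the value of $\varphi$ is $c_i-\beta$, so every value taken by $\varphi$ lies in the coset $-\beta+\Q$; this gives $(iii)$ directly. Next, for $(iii)\Rightarrow(ii)$, suppose $\varphi$ takes all its values in a single coset $w_0+\Q$ of $\Q$. On $I_i$ the function $\varphi$ equals $\sum_\ell t_\ell 1_{I_\ell}(x)-\sum_\ell t_\ell\mu(I_\ell)$ restricted appropriately; more precisely, the value of $\varphi$ on $I_i$ is $t_i-c$ where $c:=\sum_\ell t_\ell\mu(I_\ell)$ is the common ``centering'' constant (recall $\int\varphi=0$ forces this constant to be subtracted). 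Since each such value lies in $w_0+\Q$, the differences of values $(t_i-c)-(t_{i'}-c)=t_i-t_{i'}$ lie in $\Q$. Hence all the $t_i$ differ by rationals, so fixing $w:=t_1$ gives $t_i\in w+\Q$ for every $i$, which is $(ii)$.

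The implication $(ii)\Rightarrow(i)$ is where the real work lies, and I expect it to be the main obstacle. Assume $t_i=w+r_i$ with $r_i\in\Q$. Then
\[
\varphi=\sum_i(w+r_i)(1_{I_i}-\mu(I_i))=w\sum_i(1_{I_i}-\mu(I_i))+\sum_i r_i(1_{I_i}-\mu(I_i)).
\]
The crucial observation is that $\sum_i 1_{I_i}=1_{[0,1)}=1$ identically (the $I_i$ partition $[0,1)$) and $\sum_i\mu(I_i)=1$, so the first sum $w\sum_i(1_{I_i}-\mu(I_i))=w(1-1)=0$ collapses to zero. Therefore $\varphi=\sum_i r_i 1_{I_i}-\sum_i r_i\mu(I_i)$, and setting $c_i:=r_i\in\Q$ and $\beta:=\sum_i r_i\mu(I_i)\in\R$ exhibits $\varphi$ in the form~(\ref{rat1}), establishing rationality. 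The delicate point here, and the reason I flag it as the main obstacle, is ensuring the telescoping $\sum_i(1_{I_i}-\mu(I_i))=0$ genuinely holds as an identity of functions; this relies on the $\{I_i\}$ forming a partition covering $[0,1)$, which is exactly the content of the uniqueness convention adopted after~(\ref{formCocyGen0}). If one used an arbitrary (non-maximal, possibly overlapping) representation, this cancellation could fail, so the argument must invoke the specific normalized representation.

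Finally, for the concluding assertion that the rational cocycles form a $\Q$-linear space, I would argue via characterization~$(iii)$: if $\varphi,\varphi'$ are rational with value-cosets $w+\Q$ and $w'+\Q$, then for $\lambda,\lambda'\in\Q$ the combination $\lambda\varphi+\lambda'\varphi'$ takes values in $\lambda w+\lambda'w'+\Q$, again a single coset of $\Q$, hence is rational. Closure under $\Q$-scalar multiplication and addition then follows at once, giving the $\Q$-vector-space structure. I would note that one cannot expect an $\R$-linear space here, since multiplying by an irrational $\lambda$ destroys the coset-of-$\Q$ property unless $w=0$.
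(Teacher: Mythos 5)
Your proof is correct and takes essentially the same route as the paper's: both arguments rest on evaluating $\varphi$ pointwise on the intervals $I_i$ of the unique representation~(\ref{rat0}) and on the collapse identity $\sum_i(1_{I_i}-\mu(I_i))=0$, and both deduce the $\Q$-linear structure directly from characterization (iii). The only difference is cosmetic: you run the implication cycle as (i)$\Rightarrow$(iii)$\Rightarrow$(ii)$\Rightarrow$(i), while the paper runs (i)$\Rightarrow$(ii)$\Rightarrow$(iii)$\Rightarrow$(i) with the same computations distributed among the steps.
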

\begin{proof} (i)$\Rightarrow$(ii) \ By (\ref{rat0}),
$\varphi=\sum_{i=1}^mt_i1_{I_i} -\gamma$, where
$\gamma=\sum_{i=1}^mt_i\mu(I_i)$. For $x\in I_i$ we have
$$
c_i-\beta=\varphi(x)=t_i-\gamma,$$ so $t_i\in (\gamma-\beta)+\Q$ for
$i=1,...,m$.

(ii)$\Rightarrow$(iii) For some $r_i\in\Q$, $i=1,...,m$ and $x\in
[0,1)$ we have
$$\varphi(x)=\sum_{i=1}^m(w+r_i)(1_{I_i}(x)-\mu(I_i))=\sum_{i=1}^m
r_i1_{I_i}(x)+(w-\gamma)\in (w-\gamma)+\Q.$$ (iii)$\Rightarrow$(i) \
Take the unique representation~(\ref{rat0}) of $\varphi$:
$\varphi=\sum_{i=1}^mt_i1_{I_i}-\gamma$ with
$\gamma=\sum_{i=1}^mt_i\mu(I_i)$. By assumption, there exists
$\eta\in\R$ such that $\varphi(x)\in\eta+\Q$ for each $x\in[0,1)$.
Thus, for $x\in I_i$ we have $$t_i-\gamma=\varphi(x)=\eta+r_i$$ for
some $r_i\in\Q$. Whence $t_i\in (\gamma+\eta)+\Q$ for $i=1,...,m$.

The latter assertion follows directly from~(iii).
\end{proof}

Suppose that $\varphi$ is rational with a
representation~(\ref{rat1}) and let $\varphi=\sum_{i=1}^m
c_i'1_{I_i}-\beta'$ (with $c_i'\in\Q)$ be another rational
representation. Then by~(iii) of Lemma~\ref{rat2} it follows that
$\beta-\beta'\in\Q$, in other words, in the rational
representation~(\ref{rat1}) the coset $\beta+\Q\in\R/\Q$ is unique.
By $\beta(\varphi)$ we will denote that coset (in fact, less
formally it will be the number $\beta$ in~(\ref{rat1}) understood
modulo $\Q$). Note that $$\varphi(x)\in \beta(\varphi)\;\;\mbox{for
all}\;\;x\in\T.$$ With this in mind we have immediately the
following observation:

\begin{Lemma}\label{rat3} Assume that
$\varphi^1,...,\varphi^d:\T\to\R$ are rational step cocycles. Assume
moreover that $a_j\in\Q$ for $j=1,...,d$ and set
$\varphi=\sum_{j=1}^da_j\varphi^j$. Then
$$
\beta(\varphi)=\sum_{j=1}^da_j\beta(\varphi^j).$$
\end{Lemma}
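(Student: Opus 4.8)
The plan is to produce an explicit rational representation of $\varphi=\sum_{j=1}^d a_j\varphi^j$ and then read off its additive constant. First I would fix, for each $j$, a rational representation as in~(\ref{rat1}), say $\varphi^j=\sum_i c_i^{\,j}1_{I_i^{\,j}}-\beta_j$ with $c_i^{\,j}\in\Q$, so that by definition $\beta(\varphi^j)=\beta_j+\Q$. The intervals $I_i^{\,j}$ generally depend on $j$, so the next step is to pass to a common partition: let $\{J_k\}$ be the finite partition of $[0,1)$ into intervals determined by all endpoints of all the families $\{I_i^{\,j}\}_i$, $j=1,\dots,d$. Each $1_{I_i^{\,j}}$ is then a finite sum of the indicators $1_{J_k}$, so every $\varphi^j$ can be rewritten as $\varphi^j=\sum_k c_k^{(j)}1_{J_k}-\beta_j$ with $c_k^{(j)}\in\Q$ (the coefficient $c_k^{(j)}$ being the value $c_i^{\,j}$ attached to the unique $I_i^{\,j}\supset J_k$).

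With this common partition in hand, linearity is immediate:
\begin{equation*}
\varphi=\sum_{j=1}^d a_j\varphi^j=\sum_k\Big(\sum_{j=1}^d a_jc_k^{(j)}\Big)1_{J_k}-\sum_{j=1}^d a_j\beta_j.
\end{equation*}
Since $a_j\in\Q$ and $c_k^{(j)}\in\Q$, every coefficient $\sum_j a_jc_k^{(j)}$ is rational, so $\varphi$ takes all its values in the single coset $-\sum_j a_j\beta_j+\Q$. By (iii) of Lemma~\ref{rat2} this shows that $\varphi$ is rational, and the uniqueness of the coset $\beta+\Q$ (discussed right after Lemma~\ref{rat2}) identifies its additive constant, giving $\beta(\varphi)=\sum_{j=1}^d a_j\beta_j+\Q$.

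It remains only to recognise the right-hand side as $\sum_j a_j\beta(\varphi^j)$. Here I would note that, because each $a_j$ is rational, multiplication by $a_j$ descends to a well-defined map on $\R/\Q$: if $\beta\equiv\beta'\ (\mathrm{mod}\ \Q)$ then $a_j(\beta-\beta')\in a_j\Q\subset\Q$. Hence $a_j\beta(\varphi^j)=a_j\beta_j+\Q$ is unambiguous, and $\sum_j a_j\beta_j+\Q=\sum_j a_j(\beta_j+\Q)=\sum_j a_j\beta(\varphi^j)$, which is exactly the asserted identity.

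The argument is essentially bookkeeping, and I do not expect a serious obstacle. The only points requiring care are the passage to the common refinement—where one must check that the coefficients stay rational, which is automatic since they are merely copied from the original rational coefficients—and the observation that $\Q$-scalar multiplication passes to $\R/\Q$, which is what guarantees that both sides of the claimed equality are well-defined elements of $\R/\Q$ in the first place. The hypothesis $a_j\in\Q$ is used precisely at these two points, to keep $\sum_j a_jc_k^{(j)}$ rational and to make $a_j\beta(\varphi^j)$ meaningful; without it the statement would not even be well-posed.
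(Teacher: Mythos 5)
Your proof is correct and is essentially the paper's own argument: the paper treats the lemma as immediate from the observation that a rational cocycle takes all its values in a single coset of $\Q$ (criterion (iii) of Lemma~\ref{rat2}) together with the uniqueness of that coset and the fact that multiplication by $a_j\in\Q$ descends to $\R/\Q$ --- precisely the ingredients you use. Your common-refinement partition is harmless extra bookkeeping; the same conclusion follows pointwise from $\varphi^j(x)\in\beta(\varphi^j)$ and $a_j\in\Q$, without constructing a joint representation.
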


Now, let $d\geq1$. We say that a step cocycle $\Phi:\T\to\R^d$ is a
{\it rational step cocycle} if its coordinates $\varphi^j$ are
rational, i.e.:
\begin{equation}
\varphi^j =\sum_i c_{i,j} 1_{I_{i,j}} - \beta_j, \label{comblin}
\end{equation}
where the coefficients $c_{i,j}$ are rational numbers and $\beta_j$
is such that $\int_0^1 \varphi^j d\mu = 0$, $j=1,...,d$.

In this case, by replacing $\Phi$ by its non-zero integer multiple
so that all $c_{i,j}$ are integers (recall that a non-zero multiple
of a cocycle $\Phi$ shares its ergodic properties with $\Phi$) we
obtain:
\begin{equation}
\varphi^j_n(x) = u^j_{(n)}(x) - \{n \beta_j\}, \ n \ge 1,
\label{formCocy}
\end{equation}
where the functions $u^j_{(n)}$ have values in $\Z$.

Below we write $\beta_j =\beta_j(\varphi^j)= \beta_j(\Phi)$ (in the
representation~(\ref{comblin})) to stress the dependence of the
$\beta_j$'s on the cocycle $\Phi$. The number of discontinuities of
$\Phi$ is denoted $D(\Phi)$.

We denote by ${\cal L}(\beta_j)$ the set of limit values of the
sequence $(\|q_n \beta_j\| \,)_{n \ge 1}$. Observe that if ${\cal
L}(\beta_j) \not = \{0\}$, there exists a sequence $(n_k)$ such that
$\lim_k \{q_{n_k} \beta_j\} \in (0,1)$.

Let $L:= \max_{i,j} V(\psi^{i,j})$ in case (\ref{formCocyGen}), or
$L:= \max_j V(\varphi^j)$ in case (\ref{formCocy}), where $V$ is the
variation.

${\cal F}$ will denote the interval of integers
\begin{eqnarray} {\cal F}
=\{\ell \in \Z: |\ell| \le L+ 1 \}. \label{valF}
\end{eqnarray}

 From (\ref{f_8}), (\ref{formCocy}) and~(\ref{11a}), it follows
that:
\begin{eqnarray}
u^j_{(q_n)} (x) \in {\cal F}, \ u^{i,j}_{(q_n)}(x) \in {\cal F}.
\label{valF2}
\end{eqnarray}

\vskip 3mm
\subsection{Rational step cocycles} \label {RatStep}

\begin{Lemma} \label{integEssVal}
Let $\Phi$ be a rational step cocycle. If ${\cal L}(\beta_{j_0})
\not = \{0\}$ for some $j_0$, then $\E(\Phi)$ contains a rational
vector $\theta =(\theta_1, ..., \theta_d)$ with $\theta_{j_0} \not =
0$.
\end{Lemma}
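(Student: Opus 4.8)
The plan is to exhibit a rational essential value directly, using the denominators $(q_n)$ of $\alpha$ as a rigidity sequence together with the essential-value criterion of Proposition~\ref{supp}. Since $\|q_n\alpha\|\to0$, the sequence $(q_n)$ is a rigidity sequence for $T$, so it suffices to compute a weak limit of the distributions $(\Phi_{q_n})_\ast\mu$ along a suitable subsequence and read off its support. Throughout I work with the integer representation~(\ref{formCocy}), i.e.\ after replacing $\Phi$ by a non-zero integer multiple so that $\varphi^j_{q_n}(x)=u^j_{(q_n)}(x)-\{q_n\beta_j\}$ with $u^j_{(q_n)}$ integer-valued; by Lemma~\ref{hom} applied to the scalar isomorphism this multiplies $\E(\Phi)$ by a non-zero integer and hence does not affect the conclusion.

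First I would pass to a subsequence $(n_k)$ realizing the hypothesis: since $\mathcal{L}(\beta_{j_0})\neq\{0\}$ there is $(n_k)$ with $\{q_{n_k}\beta_{j_0}\}\to t_0\in(0,1)$. By~(\ref{valF2}) each $u^j_{(q_{n_k})}$ takes values in the fixed finite set $\mathcal{F}$, so the joint law of the integer vector $U_k:=(u^1_{(q_{n_k})},\dots,u^d_{(q_{n_k})})$ lives on the finite set $\mathcal{F}^d$; refining $(n_k)$, I may assume $(U_k)_\ast\mu\to\Lambda$ on $\mathcal{F}^d$ and simultaneously $\{q_{n_k}\beta_j\}\to s_j$ for every $j$ (with $s_{j_0}=t_0$). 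Then $(\Phi_{q_{n_k}})_\ast\mu\to\nu$, where $\nu$ is the image of $\Lambda$ under the translation $m\mapsto m-s$, $s=(s_1,\dots,s_d)$. By Proposition~\ref{supp}, $\mathrm{supp}(\nu)=\{m-s:\ m\in\mathrm{supp}(\Lambda)\}\subset\E(\Phi)$, and since $\E(\Phi)$ is a group, every difference of two such points, namely any $m-m'\in\Z^d$ with $m,m'\in\mathrm{supp}(\Lambda)$, again lies in $\E(\Phi)$.

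It remains to produce $m,m'\in\mathrm{supp}(\Lambda)$ with $m_{j_0}\neq m'_{j_0}$; the resulting $\theta:=m-m'\in\Z^d$ is then the required rational essential value with $\theta_{j_0}\neq0$. This is where the hypothesis $t_0\in(0,1)$ enters, and it is the crux of the argument. The $j_0$-marginal $\lambda$ of $\Lambda$ is the weak limit of the laws of the integer-valued functions $u^{j_0}_{(q_{n_k})}$; since $\int\varphi^{j_0}\,d\mu=0$ we have $\int u^{j_0}_{(q_{n_k})}\,d\mu=\{q_{n_k}\beta_{j_0}\}$, and because the common support $\mathcal{F}$ is finite the means converge, giving $\int x\,d\lambda(x)=t_0\in(0,1)$. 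A probability measure carried by the integers whose mean is not an integer cannot be a Dirac mass, so $\lambda$ has at least two atoms; hence $\mathrm{supp}(\Lambda)$ contains points with two distinct $j_0$-coordinates, which is exactly what is needed.

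The main obstacle is precisely this last non-degeneracy step: everything reduces to ruling out that $u^{j_0}_{(q_{n_k})}$ collapses, in the limit, to a single integer value, and the clean way to exclude this is the integrality of $u^{j_0}$ combined with the non-integer limiting mean $t_0$. The only point requiring care is the passage to integer coefficients, since the auxiliary quantity $\beta_{j_0}$ (hence $\mathcal{L}(\beta_{j_0})$) is tied to the chosen integer representation~(\ref{formCocy}); once that representation is fixed, the identity $\int u^{j_0}_{(q_{n_k})}\,d\mu=\{q_{n_k}\beta_{j_0}\}$ is exact and the argument closes.
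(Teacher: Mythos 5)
Your proposal is correct and follows essentially the same route as the paper: pass to the integer representation~(\ref{formCocy}), extract a subsequence along which $\{q_{n_k}\beta_{j}\}\to\delta_j$ with $\delta_{j_0}\in(0,1)$ and the laws of the integer vectors $u^j_{(q_{n_k})}$ converge, use $\int u^{j_0}_{(q_{n_k})}\,d\mu=\{q_{n_k}\beta_{j_0}\}$ to force two distinct limiting values of the $j_0$-coordinate, and subtract the two resulting essential values to get a non-zero integer vector, finally dividing by the integer multiple. The only cosmetic difference is that you invoke Proposition~\ref{supp} directly on the limit law $\Lambda$, whereas the paper phrases the same step through the partition sets $A_{k,\ell_1,\ldots,\ell_d}$ and the quasi-period Lemma~\ref{lem-period} (which is itself proved via Proposition~\ref{supp}).
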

\begin{proof} By multiplying $\Phi$ by an integer if needed, we can use
(\ref{formCocy}) with $u^j_{(n)}(x) \in \Z$. We can select a
subsequence $(n_k)$ so that $(\{q_{n_k} \beta_{j}\})_{k \geq 1}$
converges for all $j=1,\ldots,d$ to a limit denoted $\delta_j$, with
$\delta_{j_0} \in (0,1)$. Taking into account~(\ref{valF2}), denote
for $(\ell_1,\ldots, \ell_d) \in{\cal F}^d$
$$A_{k, \ell_1,\ldots, \ell_d}=\{x\in\T:u^j_{(q_{n_k})}(x)=\ell_j,\;j=1, \ldots, d\}.$$
Note that, for each $k\geq1$, $\{A_{k, \ell_1,\ldots,
\ell_d}:\:(\ell_1,\ldots, \ell_d)\in{\cal F}^d\}$ is a partition of
$\T$. By passing to a further subsequence if necessary, we can
assume that $\mu\left(A_{k, \ell_1,\ldots, \ell_d}\right) \to
\gamma_{\ell_1,\ldots, \ell_d} \;\; \mbox{when}\;\;k\to\infty$, for
each $(\ell_1,\ldots, \ell_d)\in{\cal F}^d$. In view
of~(\ref{valF}),~(\ref{formCocy}) and the fact that $\int
\varphi^j\,d\mu=0$, we have
\begin{eqnarray*} &&\sum_{\ell \in{\cal F}} \, \ell \, \mu
\left(\cup_{\ell_1,\ldots, \ell_{j_0-1}, \ell_{j_0+1}, \ldots,
\ell_d\in{\cal F}} \, A_{k, \ell_1,\ldots,
\ell_{j_0-1}, \ell, \ell_{j_0+1},\ldots, \ell_d}\right) \\
&=&\int_0^1 \, u^{j_0}_{(q_{n_k})}(x)\,dx = \int_0^1
\left(\varphi^{j_0}_{q_{n_k}}(x)+ \{q_{n_k}\beta_{j_0}\}\right)\,dx
= \{q_{n_k}\beta_{j_0}\} \to \delta_{j_0}.
\end{eqnarray*}
It follows that
\begin{equation}\label{dod1}
\sum_{\ell \in{\cal F}} \, \ell \sum_{\ell_1, \ldots, \ell_{j_0-1},
\ell_{j_0+1},\ldots, \ell_d \in{\cal F}} \gamma_{\ell_1, \ldots,
\ell_{j_0-1}, \ell, \ell_{j_0+1},\ldots,
\ell_d}=\delta_{j_0}\end{equation} with $\delta_{j_0}\in(0,1)$.
Hence there are $\underline \ell \neq \underline \ell'$ such that
\begin{eqnarray*}
\sum_{\ell_1, \ldots, \ell_{j_0-1}, \ell_{j_0+1}, \ldots, \ell_d
\in{\cal F}} \gamma_{\ell_1, \ldots, \ell_{j_0-1}, \underline \ell,
\ell_{j_0+1},\ldots, \ell_d}>0, \sum_{\ell_1, \ldots, \ell_{j_0-1},
\ell_{j_0+1},\ldots, \ell_d\in{\cal F}} \gamma_{\ell_1, \ldots,
\ell_{j_0-1}, \underline \ell', \ell_{j_0+1},\ldots, \ell_d}>0.
\end{eqnarray*}
Indeed otherwise, $\sum_{\ell_1, \ldots, \ell_{j_0-1}, \ell_{j_0+1},
\ldots, \ell_d\in{\cal F}} \gamma_{\ell_1, \ldots, \ell_{j_0-1},
\ell_0, \ell_{j_0+1},\ldots, \ell_d}=1$ for some $\ell_0\in{\cal F}$
and the other sums are 0, so that the left hand side of~(\ref{dod1})
is an integer, a contradiction. This implies
$$\gamma_{\ell_1,\ldots, \ell_{j_0-1}, \underline \ell, \ell_{j_0+1}, \ldots, \ell_d}>0,\;
\gamma_{\ell'_1, \ldots, \ell'_{j_0-1}, \underline \ell',
\ell'_{j_0+1},\ldots, \ell'_d}>0,$$ for some $d-1$-uples $(\ell_1,
\ldots \ell_{j_0-1}, \ell_{j_0+1}, \ldots, \ell_d)$ and $(\ell'_1,
\ldots \ell'_{j_0-1}, \ell'_{j_0+1},\ldots, \ell'_d)$.

By Lemma~\ref{lem-period} it follows that
\begin{eqnarray*}
&&(\ell_1-\delta_1,\ldots, \ell_{j_0-1}-\delta_{j_0-1}, \underline
\ell - \delta_{j_0}, \ell_{j_0+1}- \delta_{j_0+1},\ldots, \ell_d-\delta_d)\in \E(\Phi), \\
&&(\ell'_1-\delta_1,\ldots, \ell'_{j_0-1}-\delta_{j_0-1}, \underline
\ell' - \delta_{j_0}, \ell'_{j_0+1}- \delta_{j_0+1},\ldots, \ell'_d
- \delta_d)\in \E(\Phi).
\end{eqnarray*}

Thus $(\ell_1- \ell'_1,\ldots, \underline \ell - \underline
\ell',\ldots, \ell_d-l'_d)\in \E(\Phi)$ with $\underline \ell -
\underline \ell'\neq0$ which completes the proof (for the initial
$\Phi$ we have to divide by an integer and obtain a non zero
essential value with rational coordinates).
\end{proof}

\vskip 3mm
\begin{Th} \label{ratReduc}
Let $\Phi$ be a rational step cocycle with values in $\R^d$. There
are $d(\Phi)$, $0 \leq d(\Phi) \leq d$, and a change of basis of
$\R^d$ given by a rational matrix $M$ such that $M \Phi = ({\hat
\varphi}^1, ...,{\hat \varphi}^{d(\Phi)}, {\hat
\varphi}^{d(\Phi)+1}, ..., {\hat \varphi}^d)$ satisfies:

1) $\E(M \Phi)$ contains the subgroup generated by
$$(1, 0, ..., 0), (0, 1, 0, ..., 0), ..., (\underbrace{0,
0, ..., 1}_{d(\Phi)}, 0, ..., 0),$$

2) the cocycle $\hat \Phi = ({\hat \varphi}^{d(\Phi)+1}, ..., {\hat
\varphi}^d)$ is a rational cocycle like (\ref{comblin}) and
satisfies $\lim_n\|q_n \beta_j(\hat \Phi)\| = 0$ for
$j=d(\Phi)+1,\ldots,d$.
\end{Th}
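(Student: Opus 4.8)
The plan is to iterate Lemma~\ref{integEssVal} to successively extract integer essential values and peel off one coordinate direction at a time, reducing the dimension $d$ at each step. The key observation is that Lemma~\ref{integEssVal} tells us: as long as some coordinate $\beta_{j_0}$ has $\mathcal{L}(\beta_{j_0})\neq\{0\}$, the group $\E(\Phi)$ contains a nonzero rational vector with nonvanishing $j_0$-th entry. We want to convert such a vector into a basis vector of the form $(0,\ldots,1,\ldots,0)$ inside $\E(M\Phi)$ for a suitable rational change of basis, and then quotient it out to continue working in a lower dimension.

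First I would set up the induction on $d$. If for every $j$ we have $\mathcal{L}(\beta_j)=\{0\}$, i.e.\ $\lim_n\|q_n\beta_j\|=0$ for all $j$, then we are already done: take $d(\Phi)=0$, $M=\Id$, and $\hat\Phi=\Phi$, and condition~2) holds with condition~1) being vacuous. Otherwise, pick $j_0$ with $\mathcal{L}(\beta_{j_0})\neq\{0\}$. By Lemma~\ref{integEssVal}, $\E(\Phi)$ contains a rational vector $\theta$ with $\theta_{j_0}\neq0$. I would then invoke Lemma~\ref{changeBase} (in its rational case) to find a rational matrix taking $\theta$ to $(1,0,\ldots,0)$; more precisely, I want a rational change of basis under which this rational essential value becomes the first standard basis vector, so that $e_1\in\E(M_1\Phi)$. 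Since $e_1\in\E(M_1\Phi)$, I apply Lemma~\ref{quotient} with $H=\R e_1$ (or rather the line spanned by the first coordinate): the quotient cocycle $(M_1\Phi)+H$ takes values in $\R^{d-1}$, is again a rational step cocycle (rational operations preserve rationality of the representation, and by Lemma~\ref{rat3} the $\beta$-cosets transform linearly), and by~(\ref{aaaa}) its essential value group is $\E(M_1\Phi)/H$.

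The inductive step then applies the theorem to this $(d-1)$-dimensional rational step cocycle, yielding a further rational change of basis on $\R^{d-1}$ and an integer $d(\hat\Phi)\le d-1$ splitting off $d(\hat\Phi)$ more standard directions with trivial $\mathcal{L}$-behavior on the remainder. Lifting everything back through the canonical projection $\pi:\R^d\to\R^d/H\cong\R^{d-1}$ and composing the rational changes of basis, I set $d(\Phi)=1+d(\hat\Phi)$. Condition~1) follows because each peeled-off direction corresponds to a standard basis vector lying in $\E(M\Phi)$ (by construction for the first, and by pullback through Lemma~\ref{quotient} for the rest, using that $H\subset\E$ allows equality in~(\ref{aaaa})). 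Condition~2) follows since the leftover coordinates all satisfy $\lim_n\|q_n\beta_j\|=0$ by the terminating case of the induction.

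The main obstacle I expect is bookkeeping the interaction between the rational changes of basis and the $\beta_j$-cosets: one must check that after applying a rational matrix $M$, the transformed cocycle $M\Phi$ is still a rational step cocycle in the sense of~(\ref{comblin}), so that Lemma~\ref{integEssVal} can be reapplied in the induction, and that the quantities $\mathcal{L}(\beta_j)$ for the residual cocycle $\hat\Phi$ are genuinely controlled (i.e.\ all limit sets are $\{0\}$) when the process terminates. The transformation law for the $\beta$-cosets under rational linear combinations is exactly Lemma~\ref{rat3}, so rationality is preserved; the delicate point is ensuring that the choice of subsequence $(n_k)$ implicit in $\mathcal{L}(\beta_{j_0})\neq\{0\}$ does not conflict across the finitely many induction stages, but since we only need existence of a nonzero essential value at each stage (not a common subsequence), each application of Lemma~\ref{integEssVal} is self-contained and the induction closes cleanly.
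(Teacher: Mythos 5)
Your overall strategy---iterating Lemma~\ref{integEssVal} together with rational changes of basis---is the same engine that drives the paper's proof, but your implementation via quotients contains a genuine gap. You apply Lemma~\ref{quotient} with $H=\R e_1$ and invoke the equality~(\ref{aaaa}), $\E\bigl((M_1\Phi)+H\bigr)=\E(M_1\Phi)/H$, justifying it by ``$H\subset\E$''. This hypothesis is not available: knowing $e_1\in\E(M_1\Phi)$ only gives $\Z e_1\subset\E(M_1\Phi)$ (the essential value group is a group, but typically a discrete one), not $\R e_1\subset\E(M_1\Phi)$. Without $H\subset\E(M_1\Phi)$, only the one-sided inclusion~(\ref{b}), $\E(M_1\Phi)/H\subset\E\bigl((M_1\Phi)+H\bigr)$, holds, and it points in the wrong direction for your induction: essential values of the full cocycle project to essential values of the quotient, but essential values found in the $(d-1)$-dimensional quotient need not lift to essential values of $M_1\Phi$. (For instance, if $\varphi^1$ is a non-regular cocycle with $\E(\varphi^1)=\{0\}$ which is not a coboundary, the pair $(\varphi^1,\varphi^2)$ can have an essential value group strictly smaller than $\{0\}\times\E(\varphi^2)$.) Since your condition~1) is obtained precisely by pulling back the basis vectors produced by the inductive step in the quotient, this step is unproved. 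Replacing $H=\R e_1$ by $H=\Z e_1$ would make the hypothesis of~(\ref{aaaa}) true, but then the quotient group is $\T\times\R^{d-1}$ rather than $\R^{d-1}$, and neither the inductive hypothesis nor Lemma~\ref{integEssVal} applies to cocycles with values in that group.

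The paper avoids this problem by never passing to a quotient: the whole iteration takes place in $\R^d$. At stage $k$ one has secured $e_1,\ldots,e_k\in\E(M_k\cdots M_1\Phi)$; one then looks at the representation constants $\beta_j$ of the remaining coordinates $j>k$ of the \emph{full} $d$-dimensional cocycle. If ${\cal L}(\beta_{j_0})\neq\{0\}$ for some $j_0>k$, Lemma~\ref{integEssVal} applied to the full cocycle (its hypothesis concerns only the single coordinate $j_0$, so no dimension reduction is needed) yields a rational $\theta\in\E(M_k\cdots M_1\Phi)$ with $\theta_{j_0}\neq0$; such a $\theta$ is automatically linearly independent of $e_1,\ldots,e_k$, so one may choose the next rational change of basis $M_{k+1}$ to \emph{fix} $e_1,\ldots,e_k$ and send $\theta$ to $e_{k+1}$. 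Since $\E(M_{k+1}\Psi)=M_{k+1}\E(\Psi)$ by Lemma~\ref{hom}, all previously secured basis vectors survive, and condition~1) holds at the end; the process terminates when all remaining ${\cal L}(\beta_j)$ are $\{0\}$, which is condition~2). If you restructure your induction this way---peeling off coordinates only in the bookkeeping sense, never by an actual quotient---your argument becomes the paper's proof.
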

\begin{proof}
We will apply successively Lemmas \ref{integEssVal},
\ref{changeBase} and~\ref{rat3}. If ${\cal L}(\beta_{j}) = \{0\}$
for all $j=1,\ldots,d$, we put $d(\Phi) = 0$. Suppose not all ${\cal
L}(\beta_{j})$ are equal to $\{0\}$, say ${\cal L}(\beta_{1}) \neq
\{0\}$. Then by Lemma~\ref{integEssVal}, there is a rational vector
$\theta = (\theta_1,\ldots, \theta_d)\in \E(\Phi)$ with
$\theta_1\neq0$.

Take a linear (rational) isomorphism $M_1$ of $\R^d$, so that
$M_1(\theta)=e_1$, where $e_1=(1, 0, ..., 0)$ and consider
$M_1(\Phi)=(\varphi'_1, \ldots, \varphi'_d)$. The step cocycles
$\varphi'_2,\ldots,\varphi'_d$ have their own
representation~(\ref{comblin}) with $\beta_j'$ instead of $\beta_j$.
We now look at ${\cal L}(\beta'_j)$ for $j=2,\ldots,d$. If all these
sets are equal to $\{0\}$, we set $d(\Phi)=1$ and the proof is
finished.

Suppose not all ${\cal L}(\beta'_j)$ for $j=2,\ldots,d$ are equal to
zero, say ${\cal L}(\beta'_2)\neq0$. We apply
Lemma~\ref{integEssVal} to $M_1(\Phi)$ and obtain
$\theta'=(\theta'_1,\theta'_2,\ldots)\in \E(M_1\Phi)$ with
$\theta'_2\neq0$. Note that $e_1$ and $\theta'$ are linearly
independent. Then consider a linear (rational) isomorphism $M_2$ of
$\R^d$ that fixes $e_1$ and sends $\theta'$ to $e_2$ and set
$$M_2(M_1(\Phi))=(\varphi'_1, \varphi^{\prime\prime}_2,\ldots,
\varphi^{\prime\prime}_d)$$ (this cocycle has $e_1$ and $e_2$ as its
essential values).

Again, these new cocycles (except for $\varphi'_1$) have their own
representation~(\ref{comblin}) with $\beta^{\prime\prime}_j$ for
$j=2,\ldots,d$. We now look at ${\cal L}(\beta^{\prime\prime}_j)$
for $j=3,\ldots,d$. If the sets ${\cal L}(\beta^{\prime\prime}_j)$,
$j=3,\ldots,d$, are equal to $\{0\}$ we set $d(\Phi)=2$ and the
proof is complete. If not, say ${\cal
L}(\beta^{\prime\prime}_3)\neq\{0\}$, we obtain a rational vector
$\theta^{\prime\prime}=(\theta^{\prime\prime}_1,
\theta^{\prime\prime}_2,\theta^{\prime\prime}_3,\ldots)\in
\E(M_2(M_1(\Phi)))$ with $\theta^{\prime\prime}_3\neq0$. Then
consider a (rational) linear isomorphism $M_3:\R^d\to\R^d$ fixing
$e_1,e_2$ and sending $\theta^{\prime\prime}_3$ into $e_3$ and pass
to the cocycle $M_3(M_2(M_1(\Phi)))$. We complete the proof in
finitely many steps.\end{proof}

\vskip 3mm \subsection{Reduction in the bounded type case}

If we find $d(\Phi) = d$ in Theorem \ref{ratReduc}, then the group
$\E(\Phi)$ contains a subgroup with compact quotient in $\R^d$ and
hence the cocycle $\Phi$ is regular. This is the situation of the
following theorem:

\begin{Th} \label{boundTypeRed}
Let $\alpha$ be of bounded type. Let $(\beta_1, ..., \beta_d)$ be
such that there is no non trivial rational relation between
$1,\alpha, \beta_1, ..., \beta_d$. Then the cocycle $\Phi =(1_{[0,
\beta_j)} - \beta_j)_{j= 1, ..., d}$ is regular. Every step cocycle
$\varphi$ with discontinuities at $\{0, \beta_1, ..., \beta_d \}$
and dimension $d' \leq d$ is regular.
\end{Th}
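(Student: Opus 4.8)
The plan is to apply Theorem~\ref{ratReduc} to the cocycle $\Phi =(1_{[0, \beta_j)} - \beta_j)_{j=1,\dots,d}$, which is a rational step cocycle (its coordinates take integer coefficients $c_{i,j}\in\{0,1\}$ and each $\beta_j(\Phi)=\beta_j$). The key reduction we obtain from that theorem is a rational change of basis $M$ splitting $M\Phi$ into a piece whose essential-value group already contains $\Z^{d(\Phi)}\times\{0\}$ and a remaining rational cocycle $\hat\Phi$ with $\lim_n\|q_n\beta_j(\hat\Phi)\|=0$ for the surviving coordinates. The entire strategy is to show that under the hypothesis of no nontrivial rational relation among $1,\alpha,\beta_1,\dots,\beta_d$, we must in fact have $d(\Phi)=d$, i.e. the remainder $\hat\Phi$ is empty; as noted in the paragraph preceding the statement, $d(\Phi)=d$ forces $\E(\Phi)$ to contain a subgroup with compact quotient in $\R^d$, whence $\Phi$ is regular by the criterion recalled in Section~\ref{prelimin}.

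The heart of the argument is therefore to rule out $d(\Phi)<d$. Suppose $d(\Phi)<d$, so that the leftover cocycle $\hat\Phi=(\hat\varphi^{d(\Phi)+1},\dots,\hat\varphi^d)$ is nonempty and satisfies $\lim_n\|q_n\beta_j(\hat\Phi)\|=0$ for each surviving $j$. The plan is to invoke the rationality bookkeeping from Lemma~\ref{rat3}: since $M$ is a rational matrix, each surviving $\beta_j(\hat\Phi)$ is a rational combination $\sum_i a_{ij}\beta_i(\Phi)=\sum_i a_{ij}\beta_i$ of the original lengths $\beta_i$, with $a_{ij}\in\Q$, modulo the coset ambiguity in $\R/\Q$. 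I would then apply Lemma~\ref{qnbeta0}: because $\alpha$ is of bounded type and $\lim_n\|q_n\beta_j(\hat\Phi)\|=0$, that lemma forces $\beta_j(\hat\Phi)\in\Z\alpha+\Z$. Combined with the rational relation expressing $\beta_j(\hat\Phi)$ through the $\beta_i$, this yields a nontrivial rational relation among $1,\alpha,\beta_1,\dots,\beta_d$ (the coefficients $a_{ij}$ are not all zero since $M$ is invertible, and membership in $\Z\alpha+\Z$ contributes the $1,\alpha$ terms), contradicting the hypothesis. Hence $d(\Phi)=d$ and regularity of $\Phi$ follows.

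For the final sentence about an arbitrary step cocycle $\varphi$ with discontinuities in $\{0,\beta_1,\dots,\beta_d\}$ and dimension $d'\le d$, the plan is to observe that after the normalization of Remark~\ref{notAlpha} (so that discontinuity differences are never multiples of $\alpha$) such a $\varphi$ is a $\Q$-linear combination of the coordinates of $\Phi$, using that each jump of $\varphi$ at $\beta_j$ is built from the building blocks $1_{[0,\beta_j)}-\beta_j$. Thus $\varphi=N\Phi$ for a suitable rational linear map $N:\R^d\to\R^{d'}$ (up to a coboundary), and regularity of $\varphi$ follows from regularity of $\Phi$ by Lemma~\ref{nonregIm}, since a continuous homomorphic image of a regular cocycle is regular.

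The step I expect to be the main obstacle is the careful bookkeeping in the contradiction argument: one must track how the coset $\beta_j(\hat\Phi)\in\R/\Q$ is expressed through the original $\beta_i$ under the composed rational isomorphism $M$, verify that the resulting coefficients genuinely produce a \emph{nontrivial} relation (invertibility of $M$ is what guarantees the surviving rows are nonzero), and correctly fold in the $\Z\alpha+\Z$ conclusion of Lemma~\ref{qnbeta0} so that the final relation involves $1$ and $\alpha$ explicitly. The rest is a clean assembly of the cited lemmas.
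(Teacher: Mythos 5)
Your proposal is correct and follows essentially the same route as the paper: apply Theorem~\ref{ratReduc}, rule out $d(\Phi)<d$ by combining Lemma~\ref{rat3} with Lemma~\ref{qnbeta0} to manufacture a forbidden rational relation among $1,\alpha,\beta_1,\dots,\beta_d$, and obtain the second statement by viewing $\varphi$ as a linear image of $\Phi$ via Lemma~\ref{nonregIm}. One minor slip that does not affect validity: in the last part the coefficients expressing $\varphi$ through the blocks $1_{[0,\beta_j)}-\beta_j$ are the (arbitrary real) jumps of $\varphi$, so the linear map $N$ need not be rational --- but Lemma~\ref{nonregIm} applies to any continuous homomorphism, so nothing is lost.
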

\begin{proof} We use the notation of Theorem \ref{ratReduc}.
If $d(\Phi) < d$, then we have $\lim_n\|q_n \beta_j(\hat \Phi)\| =
0$ for $j=d(\Phi)+1,\ldots,d$. As $\alpha$ is of bounded type,
taking into account Lemma~\ref{rat3} and Lemma~\ref{qnbeta0}, we
find a non trivial rational relation between the numbers
$\beta_j(\hat \Phi)$. Since the changes of basis are given by
rational matrices $M$, this gives a non trivial rational relation
between $1, \alpha, \beta_1, ..., \beta_d$, contrary to the
assumption of the theorem. Therefore $d(\Phi)=d$ and the cocycle
$\Phi$ is regular. For the second statement, observe that, if $d'
\leq d$, $\varphi$ is the image of $\Phi$ by a linear map. It is
regular if $\Phi$ is regular by Lemma \ref{nonregIm}.
\end{proof}

\begin{Remark}

a) The previous proof is based on the method of rational cocycle,
but applies even to non rational cocycle.

As an illustration of the result, let us consider the cocycle:
$\varphi = \theta 1_{[0, \beta)} - 1_{[0, \theta \beta)}$, with
$(1,\alpha, \beta, \theta\beta)$ rationally independent, $\theta
\not \in \Q$ and $\beta, \theta\beta \in [0, 1)$. This cocycle is
not rational, but obtained from the cocycle $\Phi= (1_{[0, \beta)} -
\beta, 1_{[0, \theta \beta)} - \theta \beta)$ by the map $(y_1, y_2)
\to \theta y_1 - y_2$. For $\alpha$ of bounded type, $\Phi$ and
therefore also $\varphi$ are regular (Theorem \ref{boundTypeRed}).

b) An example of cocycle for which the previous method does not
solve the question of regularity, even for $\alpha$ of bounded type,
is $\varphi = 1_{[0, \beta)} - 1_{[0, \beta)} (.+\gamma) = 1_{[0,
\beta)} - 1_{[0, \beta+ \gamma)} + 1_{[0, \gamma)}$, obtained from
$\Phi = (1_{[0, \beta)} - \beta, 1_{[0, \beta+ \gamma)} - \beta -
\gamma, 1_{[0, \gamma)} - \gamma)$ by the map $(y_1, y_2, y_2) \to
y_1 - y_2 +y_3$.

(See also Example 2, after Theorem \ref{sepClust}.)

c) We would like to mention that when $\beta=1/2$ and $\alpha$ is of
bounded type the regularity (for each $\gamma\in\T$) has been shown
recently by Zhang \cite{Zh10} using different methods. In fact,
Zhang shows that the cocycle
$\Phi=(1_{[0,1/2)}(\cdot)-1/2,1_{[0,1/2)}(\cdot+\gamma)-1/2)$ is
regular (whenever $\alpha$ is of bounded type).

The regularity of $\Phi$ follows also from Lemma \ref{orbi2} and
Theorem \ref{wsd-erg-thm} below (see Corollary \ref{betawsd}).

d) We will give in Subsection \ref{clust} a different method, based
on ``clusters'' of discontinuity points, which can be applied to the
lower dimensional cocycle: $\varphi = 1_{[0, \beta)} - 1_{[0,
\gamma)} - \beta + \gamma$ when $(1,\alpha, \beta, \gamma)$ are
rationally dependent.

e) In the bounded type case, the reduction given by Theorem
\ref{ratReduc} reduces to a cocycle of the form (\ref{comblin}) such
that $\beta_{j} \in \Z \alpha + \Z$ for all $j=d_1+1,...,d$. We can
even obtain $\beta_{j} = 0$ by using the identity: $\alpha = 1_{[1-
\alpha, 1)}(x) + j(x+\alpha)-j(x)$, for $0< \alpha < 1$, where $j(x)
= \{x\}$.
\end{Remark}

\vskip 6mm
\subsection{The case $\|q_n \beta_j\| \to 0$} \label{qnzero}

If $\|q_n \beta_j\| \to 0$, $\forall j$ and $\beta_j \not \in \Z
\alpha + \Z$ (a situation which can occur only for $\alpha$ not of
bounded type), the previous method of reduction cannot be applied.
Nevertheless, there is a first step reduction, based on another
method.

\begin{Lemma}\label{mesu}
Let $\Phi$ be a step function with $D=D(\Phi)$ points of
discontinuity. We have $\mu(A_{q,\ell}(\Phi)) > 1-2D q \varepsilon$,
with $\varepsilon = \ell \|q \alpha\|$, where
$$A_{q,\ell}(\Phi) := \bigcap_{1\le s \le \ell}
\{x\in\T: \Phi_q(x) = \Phi_q(x+sq\alpha) \}, \ \ell, \ q\ge 1.$$
\end{Lemma}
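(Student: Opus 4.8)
The plan is to control, for each fixed shift $s$ with $1 \le s \le \ell$, the set of bad points $x$ where $\Phi_q(x) \neq \Phi_q(x+sq\alpha)$, and then to union-bound over $s$. The key observation is that $\Phi_q(x) = \sum_{k=0}^{q-1}\Phi(x+k\alpha)$ and $\Phi_q(x+sq\alpha) = \sum_{k=0}^{q-1}\Phi(x+sq\alpha+k\alpha)$. Since $sq\alpha$ is close to the integer $sp$ (because $\|q\alpha\| = |q\alpha - p|$ is small), the point $x+sq\alpha$ is a small perturbation of $x$ modulo $1$: more precisely $sq\alpha \equiv \pm s\|q\alpha\| \pmod 1$, so the two Birkhoff sums involve evaluating $\Phi$ at the same $q$ sample points $x+k\alpha$, merely translated by the tiny amount $s\|q\alpha\|$. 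For a step function, translating each sample point by a small amount changes the value of $\Phi$ at that point \emph{only if} the translation moves it across one of the $D$ discontinuities.

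\textbf{Counting the bad points.} First I would fix $s$ and set $\varepsilon_s = s\|q\alpha\| \le \ell\|q\alpha\| = \varepsilon$. The two sums differ only for those $x$ such that at least one of the sample points $x+k\alpha$ ($0 \le k \le q-1$) lies within distance $\varepsilon_s$ (on one side) of a discontinuity of $\Phi$. For a single discontinuity $c$, the set of $x$ for which $x+k\alpha$ lands in the offending interval of length $\varepsilon_s$ around $c$ is itself an interval of length $\varepsilon_s$; summing over the $q$ values of $k$ and the $D$ discontinuities gives a bad set of measure at most $D q \varepsilon_s \le Dq\varepsilon$ for this fixed $s$. Unioning over $s = 1, \dots, \ell$ would then give measure at most $\ell \cdot Dq\varepsilon$, which is off from the claimed bound by a factor comparable to $\ell$; so the correct bookkeeping, which I expect to recover the factor $2Dq\varepsilon$, is to union over $s$ \emph{first} inside the discontinuity-crossing count. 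The point is that as $s$ ranges over $1,\dots,\ell$ the perturbation magnitude $s\|q\alpha\|$ is increasing, so a sample point $x+k\alpha$ contributes a bad $x$ for some $s$ precisely when it lies within the \emph{maximal} band of width $\ell\|q\alpha\| = \varepsilon$ on the relevant side of a discontinuity. Thus across all $s$ the total bad set is covered by, for each of the $q$ indices $k$ and each of the $D$ discontinuities, an interval of length at most $\varepsilon$ on each of two sides, yielding measure at most $2Dq\varepsilon$.

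\textbf{The main obstacle and the conclusion.} The delicate step is exactly this last bookkeeping: one must avoid naively multiplying by $\ell$ when summing over shifts, and instead observe that the union over $s$ of the crossing-intervals around a fixed discontinuity, seen from a fixed sample point, is a single interval of length $\le \varepsilon$ (two-sided, hence the factor $2$), because all the perturbations $s\|q\alpha\|$ for $1 \le s \le \ell$ have the same sign and are bounded by $\varepsilon$. Once this is in hand, the complement $A_{q,\ell}(\Phi)$ satisfies
\[
\mu\big(A_{q,\ell}(\Phi)\big) \ge 1 - 2Dq\varepsilon,
\]
which is the asserted inequality. I would close by remarking that this is vacuous unless $\varepsilon < 1/(2Dq)$, i.e.\ unless $\|q\alpha\|$ is small relative to $q$ — which is precisely the regime (taking $q = q_n$, so $\|q_n\alpha\| \le 1/q_{n+1}$) in which the lemma will be applied to produce rigidity-type estimates in the $\|q_n\beta_j\|\to 0$ setting.
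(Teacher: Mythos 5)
Your proof is correct and follows essentially the same route as the paper: a point $x$ is bad only if some sample point $x+k\alpha$ lies within $\varepsilon=\ell\|q\alpha\|$ of one of the $D$ discontinuities (since $\|sq\alpha\|\le s\|q\alpha\|\le\varepsilon$ for all $1\le s\le\ell$), so the complement of $A_{q,\ell}(\Phi)$ is covered by the $Dq$ balls $B(\delta-k\alpha,\varepsilon)$, of total measure at most $2Dq\varepsilon$. The bookkeeping issue you flag (avoiding the naive factor $\ell$ from a union over $s$) is resolved in the paper exactly as in your argument, by absorbing all shifts into the single maximal $\varepsilon$-neighborhood rather than summing over $s$.
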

\begin{proof} \ Let $\Delta$ be the set of discontinuities of
$\Phi$. If $x \not \in A_{q,\ell}(\Phi)$, we can find $s$, $1 \le s
< \ell$, and $j$, $0\le j < q$, such that $\Phi(x+j\alpha) \not =
\Phi(x+j\alpha +sq\alpha)$. This implies that $\Phi$ has a
discontinuity at $\delta$ on the circle between $x+j\alpha$ and
$x+j\alpha +sq\alpha$, and therefore $x$ belongs to the interval
$(\delta- j\alpha - \varepsilon, \delta- j\alpha + \varepsilon)$
because $\sup_{1\le s \le \ell} \|sq\alpha\| \le \ell \|q \alpha\|$.
Now, the complement of $A_{q,\ell}(\Phi)$ is included in the set
$\bigcup_{0\le j < q,\, t \in \Delta} B(t - j \alpha, \varepsilon)$,
whose measure is less than $2 D q \varepsilon$.
\end{proof}

\vskip 3mm
\begin{Prop} \label{qn00}
Let $\Phi =(1_{[0, \beta_j)} - \beta_j, j= 1, ..., d)$. Suppose
$\beta_j \not \in \Z\alpha + \Z$ and $\|q_n \beta_j\| \to 0$,
$\forall j$, then $\E(\Phi)$ contains a non zero vector in $\Z^d$ or
a non discrete subgroup of $\R^d$.
\end{Prop}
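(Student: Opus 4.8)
The plan is to analyze the distribution of $\Phi_{q_n}$ along the denominators $q_n$ of $\alpha$ and to extract essential values by a quasi-period argument. Since $\|q_n\alpha\|\to0$, the sequence $(q_n)$ is a rigidity sequence for $T$, so Proposition~\ref{supp} and Lemma~\ref{lem-period} are available. Writing each coordinate as $\varphi^j_{q_n}(x)=u^j_{(q_n)}(x)-\{q_n\beta_j\}$ as in~(\ref{formCocy}), the integer parts $u^j_{(q_n)}$ lie in the fixed finite set ${\cal F}$ by the Denjoy--Koksma bound~(\ref{valF2}), while $\{q_n\beta_j\}\to0$ (replacing $\{q_n\beta_j\}$ by $\{q_n\beta_j\}-1$ when the limit is $1$). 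Thus, after passing to a subsequence along which each $\mu(u_{(q_n)}=\ell)$ converges, the distributions $(\Phi_{q_n})_\ast\mu$ converge weakly to a probability measure $\nu$ supported on $\Z^d$, and $\mathrm{supp}(\nu)\subset\ce(\Phi)$ by Proposition~\ref{supp}. First I would split the argument according to whether $\nu=\delta_0$.

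If $\nu\ne\delta_0$, then $\mathrm{supp}(\nu)$ contains a non-zero $\ell\in\Z^d$, and since $\ell\in\ce(\Phi)$ this already yields the first alternative. The whole difficulty is therefore the degenerate case $\nu=\delta_0$, i.e.\ $\mu(u_{(q_n)}=0)\to1$, in which the coarse limit sees only the origin and one must manufacture structure at a finer scale. Here I would fix a coordinate $j^\ast$ realizing $\max_j\|q_n\beta_j\|$ along a subsequence; since $\beta_{j^\ast}\notin\Z\alpha+\Z$, Lemma~\ref{qnbeta0} guarantees (after refining the subsequence) that $\|q_n\beta_{j^\ast}\|>\tfrac14\,q_n\|q_n\alpha\|$. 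Set $b_n:=(\{q_n\beta_j\})_j\to0$ and choose $\ell_n\asymp 1/(q_n\|q_n\alpha\|)$, so that Lemma~\ref{mesu} gives $\mu(A_{q_n,\ell_n}(\Phi))\ge\delta$ for some fixed $\delta>0$. On the set $A_{q_n,\ell_n}(\Phi)\cap\{u_{(q_n)}=0\}$, whose measure is also $\ge\delta$ since $\mu(u_{(q_n)}=0)\to1$, the orbit $x,\,x+q_n\alpha,\dots,x+\ell_n q_n\alpha$ stays inside $\{u_{(q_n)}=0\}$, where $\Phi_{q_n}\equiv-b_n$, so the cocycle telescopes to $\Phi_{sq_n}(x)=-s\,b_n$ for every $1\le s\le\ell_n$. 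As $\|sq_n\alpha\|\le\ell_n\|q_n\alpha\|\to0$, each $(sq_n)_n$ is a rigidity sequence, and Lemma~\ref{lem-period} shows that every subsequential limit of $-s_n b_n$ with $1\le s_n\le\ell_n$ lies in $\ce(\Phi)$.

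It then remains to see that these limits fill out a line. By the choice of $j^\ast$ together with Lemma~\ref{qnbeta0}, the quantity $\ell_n|b_n^{j^\ast}|=\ell_n\|q_n\beta_{j^\ast}\|$ is bounded below by a positive constant, while the spacing $|b_n^{j^\ast}|\to0$; hence, choosing $s_n$ appropriately, the $j^\ast$-coordinate $-s_n b_n^{j^\ast}$ can be made to converge to any prescribed value in a fixed interval about $0$. Since $j^\ast$ is the coordinate of maximal modulus, the ratios $b_n^j/b_n^{j^\ast}$ are bounded; passing to a further subsequence so that $b_n^j/b_n^{j^\ast}\to r_j$ (with $r_{j^\ast}=1$), the corresponding limits of $-s_n b_n$ sweep out a non-trivial segment of the line $\R(r_1,\dots,r_d)$. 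As $\ce(\Phi)$ is a closed subgroup, it then contains this whole line, a non-discrete subgroup, which is the second alternative.

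The main obstacle I expect is precisely this degenerate case $\nu=\delta_0$: its resolution is the real content, since one must use Lemma~\ref{mesu} with a \emph{growing} window $\ell_n$, combined with the Diophantine lower bound of Lemma~\ref{qnbeta0}, to produce quasi-periods at a scale at which $(q_n)$ alone is blind, and one must control the coordinate ratios $b_n^j/b_n^{j^\ast}$ carefully so that the resulting quasi-periods trace out a genuine line rather than collapsing back to $0$.
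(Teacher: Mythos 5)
Your proposal reproduces the paper's proof almost step for step: the same dichotomy (your case $\nu=\delta_0$ versus $\nu\neq\delta_0$ is exactly the paper's case distinction on whether $\mu(\{\tilde u^{j}_{(q_n)}=0\})\to1$ for every $j$), the same tools (Proposition~\ref{supp} and Lemma~\ref{lem-period} for the non-degenerate case; Lemma~\ref{qnbeta0}, Lemma~\ref{mesu}, the maximal coordinate, a window of size $\asymp 1/(q_n\|q_n\alpha\|)$, telescoping $\Phi_{sq_n}=s\Phi_{q_n}$ on the good set, and rigidity of $(s_nq_n)$ in the degenerate case), and the same conclusion that the resulting quasi-periods force $\ce(\Phi)$ to be non-discrete. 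Your observation that the limits lie on a single line $\R(r_1,\dots,r_d)$ after controlling the ratios $b_n^j/b_n^{j^\ast}$ is a slightly more explicit formulation of what the paper leaves implicit, but it is not a different method.

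One step needs repair. You first fix $j^\ast$ realizing $\max_j\|q_n\beta_j\|$ along a subsequence and then invoke Lemma~\ref{qnbeta0} to ``refine'' that subsequence so that $\|q_n\beta_{j^\ast}\|>\tfrac14\,q_n\|q_n\alpha\|$. Lemma~\ref{qnbeta0} does not support this: applied to $\beta_{j^\ast}\notin\Z\alpha+\Z$, it only yields infinitely many $n$ \emph{in the full sequence of denominators} for which the lower bound holds, and these $n$ need not meet your previously chosen subsequence in an infinite set. The paper performs the two selections in the opposite (correct) order: first apply Lemma~\ref{qnbeta0} to one fixed coordinate, say $\beta_1$, producing a subsequence $(q_{n_k})$ with $\|q_{n_k}\beta_1\|>\tfrac14\,q_{n_k}\|q_{n_k}\alpha\|$; then, along that subsequence, choose the index $j_0$ realizing the maximum infinitely often. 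Since $\|q_{n_k}\beta_{j_0}\|\geq\|q_{n_k}\beta_1\|$, the maximal coordinate automatically inherits the Diophantine lower bound. With this reordering, the rest of your argument goes through verbatim.
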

\begin{proof} For $n \ge 1$, we can write (cf. \ref{formCocy}): $\varphi^j_n(x)
= \tilde u^j_{(n)}(x) - \varepsilon \|n \beta_j\|$, where
$\varepsilon = \pm 1$ and $\tilde u^j_{(n)}$ is the integer valued
function $$\tilde u^j_{(n)} = u^j_{(n)} \text{ if }  \{n \beta_j\} =
\|n \beta_j\|, \ \ \tilde u^j_{(n)}= u^j_{(n)}+1 \text { if } \{n
\beta_j\} = 1- \|n \beta_j\|.$$

a) If $\mu(\{x\in\T: \tilde u^{j_0}_{(q_n)}(x) = 0\}) \not
\rightarrow 1$ for some $j_0$, the proof is similar to the proof of
Lemma~\ref{integEssVal}:  by passing to a subsequence if necessary
to ensure the convergence of all components $\varphi_{q_n}^j$, we
find that $\Phi$ has a quasi-period $(\rho_1, \rho_2, ..., \rho_d)$,
with $ \rho_{j_0} \not = 0$. It follows that $\E(\Phi)$ contains a
non-zero vector in $\Z^d$.

b) Now, we can assume $\lim_n \mu(\{x\in\T: \tilde u^{j}_{(q_n)}(x)
= 0\}) = 1$, for every $j$.

By Lemma \ref{qnbeta0} there is a sequence $(n_k)$ such that
$\|q_{n_k} \beta_{1}\| > {1\over 4} q_{n_k} \|q_{n_k} \alpha\|$. We
put $L_k = [\eta /\|q_{n_k} \beta_{1} \|], \ k \ge 1$, where $\eta$
is such that $\eta < {1\over 16 D}$.

There is at least one index $j_0$ such that, infinitely often,
$\|q_{n_k} \beta_{j_0}\|$ is the biggest value of the set
$\{\|q_{n_k} \beta_{j}\|, j=1,..., D(\Phi)\}$. Hence for $j_0$ and
an infinite subsequence, still denoted $({n_k})$, we have
$$0 < \|q_{n_k} \beta_j\| \leq \|q_{n_k} \beta_{j_0}\|, \, \forall j.$$
In particular, we have $\|q_{n_k} \beta_{j_0}\| \geq {1\over 4}
q_{n_k} \|q_{n_k} \alpha\|$.

Using the notation and the assertion of Lemma \ref{mesu}, we have
$$\mu(A_{q_{n_k},L_k}(\Phi)) > 1 -2D q_{n_k} L_k \|q_{n_k} \alpha\|
\ge 1-8D\eta \ge {1 \over 2}.$$ Moreover, using the definition of
$A_{q_{n_k},L_k}(\Phi)$, for $x \in A_{q_{n_k},L_k}(\Phi)$ and $\ell
\le L_k$, we have
$$\Phi_{\ell q_{n_k}} (x) = \ell \Phi_{q_{n_k}} (x)
= (\ell \tilde u^{j}_{(q_{n_k)}}(x) -\varepsilon \ell
\|q_{n_k}\beta_{j}\|, j=1,..., d)$$ with $\varepsilon=\pm1$. Let
$\rho \in (0, \eta)$. Put $\ell_k:= [\rho/\|q_{n_k}\beta_{j_0}\|] <
\eta/\|q_{n_k}\beta_{j_0}\| \leq L_k +1$. We have, for $x \in
A_{q_{n_k},L_k}(\Phi)$, outside of a set of measure tending to 0,
$$\varphi^{j_0}_{\ell_k q_{n_k}} (x) = \ell_k
\varphi^{j_0}_{q_{n_k}} (x) =\ell_k \tilde u^{j_0}_{(q_{n_k)}} (x) -
\varepsilon\ell_k \|q_{n_k}\beta_{j_0}\| = \pm
[\rho/\|q_{n_k}\beta_{j_0}\|] \, \|q_{n_k}\beta_{j_0}\| \to
\pm\rho.$$ For the other components $j\neq j_0$, outside of a set of
measure tending to 0, we have on $A_{q_{n_k},L_k}(\Phi)$,
$$\varphi^{j}_{\ell_k q_{n_k}} (x) = \ell_k \varphi^{j}_{q_{n_k}}
(x) =\ell_k \tilde u^{j}_{(q_{n_k})} (x) - \varepsilon\ell_k
\|q_{n_k}\beta_{j}\| = \pm \|q_{n_k}\beta_{j}\| \,
[\rho/\|q_{n_k}\beta_{j_0}\|].$$

The above quantity is bounded, since $\|q_{n_k}\beta_{j}\| \,
[\rho/\|q_{n_k}\beta_{j_0}\|] \leq \, \rho \,
\|q_{n_k}\beta_{j}\|/\|q_{n_k}\beta_{j_0}\| \leq \rho$. Passing to a
subsequence still denoted $(n_k)$ if necessary, we obtain that
outside of a set of measure tending to 0, on $A_{q_{n_k},L_k}(\Phi)$
the sequence $(\Phi_{\ell_k \, q_{n_k}} (x))$ converges to the
vector $(\rho_1, \rho_2, ..., \rho_d)$.

Now, the measure of $A_{q_{n_k},L_k}(\Phi)$ is bounded away from 0
and the sequence $(\ell_k q_{n_k})$ is a rigidity sequence for $T$,
since $\ell_k \leq L_k +1$ and
\begin{eqnarray*}
L_k \|q_{n_k} \alpha \| \le \eta {\|q_{n_k} \alpha\| \over \|q_{n_k}
\beta_1 \| } \le 4 {\eta \over q_{n_k}} \rightarrow 0.
\end{eqnarray*}
It follows that, for an arbitrary $\rho \in (0, \eta)$, $ \E(\Phi)$
contains a vector $(\rho_1, \rho_2, ..., \rho_d) \in \E(\Phi)$, with
$\rho_{j_0} = \rho$. It follows that $\E(\Phi)$ includes a
non-discrete subgroup of $\R^d$.
\end{proof}

\begin{Remark} \label{dim1Ess}
Lemma~\ref{integEssVal} and Proposition~\ref{qn00} show that, in
dimension 1, for $\Phi = 1_{[0, \beta]} - \beta$, if $\beta \not \in
\Z \alpha + \Z$, the group $\E(\Phi)$ contains at least a positive
integer.
\end{Remark}

\subsection{Well separated discontinuities, clusters of
discontinuities} \label{clust}

The previous method was based on diophantine properties of the
values of the integrals for rational cocycle. In this subsection we
present results relying on diophantine properties of the
discontinuities of the cocycle. We give sufficient conditions for
regularity of the cocycle defined by a step function $\Phi:\T \to
\R^d$ with integral 0.

The set of discontinuities of $\Phi_n(x)= \sum_{k=0}^{n-1} \Phi(x +
k\alpha)$ is ${\cal D}_n := \big\{ \{x_i - k\al \}:\,1\le i\le
D,0\le k < n \big\}$. We assume that the points $x_i - k\al {\rm
\mod 1}$, for $1\le i\le D,0\le k < n$, are distinct. The jump of
$\Phi$ at $x_i$ is $\sigma_i = \sigma(x_i) = \Phi(x_i^+) -
\Phi(x_i^-)$. A discontinuity of the form $\{x_i - k\al \}$ is said
to be of type $x_i$.

By Lemma \ref{cont2qn}, any interval of the circle of length $\ge
2/q_n$ contains at least one point of the set $\big\{\{x_i - k
\alpha\}, k = 0, \dots, q_n-1\big\}$, hence at least one
discontinuity of $\Phi_{q_n}$ of type $x_i$ for each $x_i \in {\cal
D}$.

\vskip 3mm \goodbreak {\bf Well separated discontinuities}

\vskip 3mm We write ${\cal D}_n=\{\gamma_{n,1} < ... <\gamma_{n,Dn}
<1\}$ and $\gamma_{n,Dn+1} = \gamma_{n,1}$, where, for $1\le\ell\le
Dn$, the points $\gamma_{n,\ell}$ run through the set of
discontinuities ${\cal D}_n$ in the natural order.


\begin{Def} \label{propert_def}
{\rm The cocycle is said to have {\em well separated discontinuities
(wsd)}, if there is $c >0$ and an infinite set ${\cal Q}$ of
denominators of $\alpha$ such that
\begin{eqnarray}
\gamma_{q,\ell+1} - \gamma_{q,\ell} \ge c/q, \ \forall q \in {\cal
Q}, \, \ell \in \{1,\dots,Dq\}. \label{wsdPrty}
\end{eqnarray}
}\end{Def}

This condition is similar to Boshernitzan's condition (\cite{Bo85})
for interval exchange transformations. The result below extends an
analogous statement when $\Phi$ takes values in $\Z^d$ (see
\cite{CoGu12}).

\begin{Th}\label{wsd-erg-thm}
Let $\Phi$ be a zero mean step function. If $\Phi$ satisfies the wsd
property~(\ref{wsdPrty}), then the group $ \E(\Phi)$ includes the
set $\{\sigma_i:\: i=1, \ldots, D\}$ of jumps at discontinuities of
$\Phi$. Moreover, $\Phi$ is regular.\end{Th}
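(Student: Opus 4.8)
The plan is to first prove the inclusion $\{\sigma_1,\dots,\sigma_D\}\subset\E(\Phi)$ and then to deduce regularity by exhibiting inside $\E(\Phi)$ a closed subgroup with compact quotient in $\R^d$. Throughout I work along the denominators $q\in{\cal Q}$ furnished by the wsd property; since $\|q\alpha\|\to0$, the family $(q)_{q\in{\cal Q}}$ is a rigidity sequence for $T$, and by the Denjoy--Koksma inequality~(\ref{f_8}) the sums $\Phi_q$ stay in a fixed ball $\bar B(0,V(\Phi))$ of $\R^d$, so that after passing to a subsequence I may assume $(\Phi_q)_\ast\mu\to\nu$ weakly, with $\nu$ a probability measure carried by that ball. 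By Proposition~\ref{supp}, ${\rm supp}(\nu)\subset\overline{\E}(\Phi)$, and since $\nu$ is carried by a bounded set, ${\rm supp}(\nu)\subset\overline{\E}(\Phi)\cap\R^d=\E(\Phi)$, which is a closed \emph{group}.

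Fix a discontinuity $x_i$. For $q\in{\cal Q}$ the set ${\cal D}_q$ contains exactly the $q$ points $\{x_i-k\alpha\}$, $0\le k<q$, of type $x_i$; at each such point $\gamma$ the step function $\Phi_q$ has a single jump equal to $\sigma_i$, so, denoting by $I_\gamma^-,I_\gamma^+$ the two partition intervals of ${\cal D}_q$ abutting $\gamma$, we have $\Phi_q\equiv v_\gamma$ on $I_\gamma^-$ and $\Phi_q\equiv v_\gamma+\sigma_i$ on $I_\gamma^+$. The wsd property~(\ref{wsdPrty}) gives $\mu(I_\gamma^\pm)\ge c/q$. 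Covering $\bar B(0,V(\Phi))$ by finitely many cells $C_1,\dots,C_m$ of diameter $<\varepsilon$ and using the pigeonhole principle, for each $q$ at least $q/m$ of the $q$ values $v_\gamma$ lie in a common cell $C$; passing to a subsequence I fix this cell. Summing the measures of the (pairwise disjoint) intervals $I_\gamma^-$, resp.\ $I_\gamma^+$, over these $\gamma$ yields $\mu(\Phi_q^{-1}(C))\ge c/m$ and $\mu(\Phi_q^{-1}(C+\sigma_i))\ge c/m$. Letting $q\to\infty$ and using the portmanteau inequality for closed sets, $\nu(\bar C)\ge c/m>0$ and $\nu(\overline{C+\sigma_i})\ge c/m>0$, so ${\rm supp}(\nu)$ meets both $\bar C$ and $\overline{C+\sigma_i}$. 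Choosing $v_0,w_0$ in these two intersections, both lie in $\E(\Phi)$, whence $w_0-v_0\in\E(\Phi)$ with $\|w_0-v_0-\sigma_i\|\le\varepsilon$. Letting $\varepsilon\to0$ and using that $\E(\Phi)$ is closed gives $\sigma_i\in\E(\Phi)$. This is the crux of the whole argument: the wsd hypothesis is exactly what prevents the masses on the two sides of the type-$x_i$ discontinuities from collapsing in the limit, and thereby guarantees that \emph{both} a value $v$ and its shift $v+\sigma_i$ survive in ${\rm supp}(\nu)$.

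For regularity, set $W={\rm span}_\R\{\sigma_1,\dots,\sigma_D\}\subset\R^d$. Consecutive values of the step function $\Phi$ differ by a jump $\sigma_i\in W$, so all its values lie in a single coset $v_0+W$; being an affine subspace, $v_0+W$ is stable under convex combinations, and since $\int_0^1\Phi\,d\mu=0$ is such a combination of the values of $\Phi$, we get $0\in v_0+W$, i.e.\ $v_0\in W$, so $\Phi:\T\to W$. Splitting $\R^d=W'\oplus W$ and writing $\Phi=(0,\Phi)$ accordingly, Remark~\ref{dodana} reduces the regularity of $\Phi$ in $\R^d$ to its regularity as a $W$-valued cocycle. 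Now the closed subgroup $\overline{\Gamma}$ of $W$ generated by $\sigma_1,\dots,\sigma_D$ spans $W$, so $W/\overline{\Gamma}$ is compact; since $\overline{\Gamma}\subset\E(\Phi)$ by the previous step, the quotient of $W$ by $\E(\Phi)$ is compact as well. As recalled after Lemma~\ref{quotient}, a cocycle whose group of essential values has compact quotient is regular; hence $\Phi$ is regular. The genuinely delicate point is the essential-value step above; the regularity is then a formal consequence of $\{\sigma_i\}\subset\E(\Phi)$ together with the fact that the jumps span the subspace actually carrying the cocycle.
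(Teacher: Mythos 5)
Your proof is correct, and its core follows the same strategy as the paper's, but with enough differences in execution to be worth comparing. For the inclusion $\sigma_i\in\E(\Phi)$, both arguments exploit the wsd bound $c/q$ on the lengths of the continuity intervals flanking each type-$x_i$ discontinuity, plus a pigeonhole on the value of $\Phi_q$ to the left of those discontinuities, so that some value $v$ and its shift $v+\sigma_i$ are both ``heavy''; the group property of $\E(\Phi)$ then delivers $\sigma_i$. The paper implements the pigeonhole over the exact finite value set $R+\theta^{(q)}$ supplied by the step representation (\ref{formCocyGen})--(\ref{valF2}) and concludes via quasi-periods (Lemma~\ref{lem-period}); you pigeonhole over an $\varepsilon$-net of the Denjoy--Koksma ball and conclude via weak limits, the portmanteau inequality and Proposition~\ref{supp}, then let $\varepsilon\to0$ using closedness of $\E(\Phi)$. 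Your variant needs only the boundedness of $(\Phi_q)_{q\in{\cal Q}}$, not the structure of its value set, at the price of one extra limiting step; also, your count of intervals whose right endpoint carries the jump $\sigma_i$ (the $q$ translates $\{x_i-k\alpha\}$) is more direct than the paper's grouping into blocks $J_{q,\ell}$ of length $\ge 2/q$ via Lemma~\ref{cont2qn}. For regularity the routes genuinely diverge: the paper observes that $\Phi/\E(\Phi)$ is a continuous, hence constant, step cocycle and invokes Lemma~\ref{decomp2}, whose proof rests on recurrence; you instead use the zero-mean hypothesis to show that $\Phi$ takes values in the span $W$ of its jumps, note that the closed subgroup generated by the jumps is cocompact in $W$ and contained in $\E(\Phi)$, and finish with Remark~\ref{dodana} and the compact-quotient criterion stated after Lemma~\ref{quotient}. (You silently identify $\R^d$ with $W'\times W$ and the essential values of $\Phi$ as a $W$-valued cocycle with those computed in $\R^d$; both identifications are justified by Lemma~\ref{hom} and Remark~\ref{dodana}, so this is harmless.) Each route is legitimate within the paper's framework: yours bypasses Lemma~\ref{decomp2} and the recurrence argument by pinning down the subspace actually carrying the cocycle, while the paper's quotient argument avoids having to determine that subspace at all.
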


\begin{proof} Let us consider $\Phi_q(x)$ for $q \in {\cal Q}$. By
(\ref{formCocyGen}) and (\ref{valF2}), we can write, with
$u^{i,j}_{(q)}(x)$ in a finite fixed set of integers $\cal F$,
$$\Phi_{q} = (\varphi^j_{q})_{j=1, ..., d} \ {\rm with \ }
 \varphi^j_{q}(x) = \sum_i t_{i,j} \, u^{i,j}_{(q)}(x) - \sum_i
t_{i,j} \,\{{q} \beta_{i,j}\}.$$ Let $\theta^{(q)} := (\theta_{j,q},
j = 1, ..., d)$ with $\theta_{j,q}:= -\sum_i t_{i,j} \,\{{q}
\beta_{i,j}\}$. We can assume that the limit $\theta := \underset{q
\to \infty, \, q \in {\cal Q}} \lim \theta^{(q)}$ exists. The set of
values of $\Phi_q$ for $q \in {\cal Q}$ is included in $R +
\theta^{(q)}$ where $R$ is the finite fixed set of vectors
$\{(\sum_{i} t_{i,j}k_{i,j}, j=1, ..., d):\: k_{i,j} \in \cal F\}$.

Let ${\mathcal I}_q$ be the partition of the circle into the
intervals of continuity of $\Phi_q$, $I_{q,\ell}= [\gamma_{q,\ell},
\gamma_{q,\ell+1})$, $1\le \ell \le Dq$. With the constant $c$
introduced in (\ref{wsdPrty}), let $J_{q, \ell} \subset \T$ be the
union of $L:= \lfloor 2/c\rfloor+1$ consecutive intervals in
${\mathcal I} _q$ starting with $I_{q,\ell}$. By~(\ref{wsdPrty})
every $J_{q,\ell}$ has length $\geq 2/q$, thus contains an element
of the set $\big\{\{x_i - s\alpha\}, s=0,\dots,q-1\big\}$ for each
$x_i$.

Therefore, for every jump $\sigma_i$ of $\Phi$, there is $v\in R$
and two consecutive intervals $I,I'\in{\mathcal I} _q$, with $I\cup
I'\subset J_{q, \ell}$, such that the value of $\Phi_q$ is $v +
\theta^{(q)}$ on $I$ and $v+ \theta^{(q)}+ \sigma_i$ on $I'$.

Given $i\in\{1,...,D\}$, we denote ${\mathcal H}_q(\sigma_i)$ the
family of intervals $I \in{\mathcal I}_q$ such that the jump of
$\Phi_q$ at the right endpoint of $I$ is $\sigma_i$. Since each
interval $J_{q, \ell}$ contains an interval $I \in {\mathcal
H}_q(\sigma_i)$, the cardinality of ${\mathcal H}_q(\sigma_i)$ is at
least ${q D }\over L$.

Fix additionally $v\in R$ and let ${\mathcal A_q}(\sigma_i,v)$ be
the set of intervals $I\in{\mathcal H}_q(\sigma_i)$ such that
$\Phi_q(x) = v + \theta^{(q)}$ on $I$. Let ${\mathcal
A_q'}(\sigma_i,v)$ be the set of intervals $I' \in {\mathcal I}_q$
adjacent on the right to the intervals $I \in {\mathcal
A_q}(\sigma_i,v)$.

Let $A_q(\sigma_i,v)$ be the union of intervals $I \in {\mathcal
A_q}(\sigma_i,v)$ and $A'_q(\sigma_i,v)$ the union of intervals
$I'\in {\mathcal A_q'}(\sigma_i,v)$. The value of $\Phi_q$ is $v +
\theta^{(q)}$ on $A_q(\sigma_i,v)$ and $v+ \theta^{(q)} + \sigma_i$
on $A'_q(\sigma_i,v)$.

There is $v_0 \in R$ and an infinite subset ${\cal Q}_0$ of ${\cal
Q}$ such that, for $q\in {\cal Q}_0$,
\begin{equation} \label{proport_eq}
|{\mathcal A_q}(\sigma_i,v_0)|, \ |{\mathcal A_q'}(\sigma_i,v_0)|\ge
{|{\mathcal H}_q(\sigma_i)| \over |R|} \ge {qD \over L|R|}.
\end{equation}

By (\ref{wsdPrty}) and (\ref{proport_eq}), we have
$\mu\left(A_q(\sigma_i,v_0)\right), \
\mu\left(A'_q(\sigma_i,v_0)\right)\ge {Dc^2 \over (2+c)|R|}$. Thus
$v_0+ \theta$ and $v_0 + \theta +\sigma_i$ are quasi-periods, hence,
by Lemma \ref{lem-period} essential values. Since $\E(\Phi)$ is a
group, $\sigma_i$ is an essential value. Therefore $\E(\Phi)$
includes the group generated by the jumps of $\Phi$.

Finally, notice that the quotient cocycle $\Phi /{\cal E}(\Phi)$ is
a continuous step cocycle, hence is constant. Therefore, the
regularity of $\Phi$ follows from Lemma~\ref{decomp2}.
\end{proof}

For $\Phi_d:= \bigl(1_{[0, \beta_j]} - \beta_j, \,j= 1, ...,
d\bigr)$ with $\beta_i\neq\beta_j$ whenever $i\neq j$, the jump of
$\Phi_d$ is $(1, ..., 1)$ at 0 and $(0,...,0, -1, 0,..., 0)$ at
$\beta_j$ ($-1$ stands at the $j$-th coordinate), $j=1,\ldots,d$. If
the wsd property is satisfied, the group $\E(\Phi)$ includes $\Z^d$.
Therefore the cocycle $\Phi_d$ is regular whenever the wsd property
holds.

In view of Lemma~\ref{orbi2} and Theorem~\ref{wsd-erg-thm}, we
obtain the following result (where the case  $\beta \in \Z\alpha +
\Z$ can be treated directly).

\begin{Cor}\label{betawsd}
Let $\alpha$ be of bounded type. Let $\beta$ be a real number.

1) The cocycle $(1_{[0, {r \over s})}(.) - {r \over s}, \,1_{[0, {r
\over s})}(. +\beta) - {r \over s})$ is regular for every rational
number ${r \over s} \in (0,1)$.

2) If ${r_1 \over s_1}, ..., {r_d \over s_d}$ are rational numbers
such that $0 < {r_i \over s_i} \beta<1$, then, for every real
numbers $t_1, ..., t_d$, the cocycle $\varphi = \sum_i t_i 1_{[0,
{r_i \over s_i} \beta)}- \beta \sum_i t_i {r_i \over s_i}$ is
regular.
\end{Cor}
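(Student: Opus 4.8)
The plan is to verify that, after the harmless reductions of Remark~\ref{notAlpha}, the cocycle in each part has the well separated discontinuities property~(\ref{wsdPrty}) along a suitable subsequence of denominators, and then to conclude regularity by a direct appeal to Theorem~\ref{wsd-erg-thm}. Before doing so I would dispose of the degenerate situation $\beta\in\Z\alpha+\Z$ directly: there the translate $1_{[0,r/s)}(\cdot+\beta)$ differs from $1_{[0,r/s)}$ by an iterate of $T$, hence by a coboundary, so in part~1) the cocycle is cohomologous to one carried by the diagonal, i.e.\ to the one dimensional rational cocycle $1_{[0,r/s)}-r/s$ (and in part~2) the terms with $(r_i/s_i)\beta\in\Z\alpha+\Z$ simply drop out). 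Since $r/s\in\Q\setminus(\Z\alpha+\Z)$, Lemma~\ref{qnbeta0} rules out $\|q_n(r/s)\|\to0$, so ${\cal L}(r/s)\neq\{0\}$ and the rational reduction of Theorem~\ref{ratReduc} already gives regularity.

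Assume now we are in the main case. First I would list the discontinuity points $x_1,\dots,x_D$ of the cocycle: for part~1) these are $0,\ r/s,\ -\beta,\ r/s-\beta$, and for part~2) they are $0$ together with the points $(r_i/s_i)\beta$. In both cases every difference $x_i-x_{i'}$ of two distinct discontinuity points lies in $\Q\beta+\Q\alpha+\Q$ (indeed in $\Q\beta+\Q$). Setting $B$ to be the finite set of those differences that do not belong to $\Z\alpha+\Z$ (exactly what survives the reduction of Remark~\ref{notAlpha}), I would apply Lemma~\ref{orbi2} to obtain a constant $c>0$ and a strictly increasing sequence $(n_k)$ with
$$\|(x_i-x_{i'})-j\alpha\|\ \ge\ \frac{c}{q_{n_k}},\qquad 0\le|j|\le q_{n_k},$$
for every pair $i\neq i'$ and every $k\ge1$.

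With this in hand I would establish~(\ref{wsdPrty}) for $q=q_{n_k}$. The discontinuities of $\Phi_{q}$ are the points $\{x_i-k\alpha\}$, $1\le i\le D$, $0\le k<q$. If two consecutive ones are of the same type $x_i$, they are separated by at least the minimal gap of $\{\{x_i-k\alpha\}:0\le k\le q\}$, which by part~4) of Lemma~\ref{cont2qn} exceeds $1/(2q)$ (deleting the point $k=q$ only enlarges gaps). If they are of different types $x_i,x_{i'}$, say $\{x_i-k\alpha\}$ and $\{x_{i'}-k'\alpha\}$ with $|k-k'|\le q-1<q$, their circular distance is $\|(x_i-x_{i'})-(k-k')\alpha\|\ge c/q$ by the bound above. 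Hence every gap is at least $\min(1/2,c)/q$, so the cocycle has the wsd property with ${\cal Q}=\{q_{n_k}:k\ge1\}$, and Theorem~\ref{wsd-erg-thm} yields regularity. For part~2), since $\varphi$ is the image of the vector cocycle $(1_{[0,(r_i/s_i)\beta)}-(r_i/s_i)\beta)_i$ under a linear map, one may alternatively invoke Lemma~\ref{nonregIm}.

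The routine points are the two separation estimates, which are immediate once the Diophantine input is available. The only genuine obstacle is producing the uniform lower bound on $\|(x_i-x_{i'})-j\alpha\|$ for $0\le|j|\le q_{n_k}$, simultaneously for all difference types along a single sequence $(n_k)$; this is precisely the role of Lemma~\ref{orbi2}, and it is exactly here that both the bounded type hypothesis and the rationality built into $r/s$ and the rational multiples $(r_i/s_i)$ of $\beta$ are used, since they guarantee that the differences lie in $\Q\beta+\Q\alpha+\Q$, the setting of that lemma.
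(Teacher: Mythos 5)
Your proof is correct and follows exactly the route the paper takes: the paper's entire proof is the one-line observation that the corollary follows from Lemma~\ref{orbi2} (applied to the differences of the discontinuity points, which lie in $\Q\beta+\Q\alpha+\Q$) combined with Theorem~\ref{wsd-erg-thm}, the case $\beta\in\Z\alpha+\Z$ being ``treated directly'' --- precisely your argument, with the wsd verification spelled out via Lemma~\ref{cont2qn}. One small quibble: in part 2) with $\beta\in\Z\alpha+\Z$ the terms with $(r_i/s_i)\beta\notin\Z\alpha+\Z$ do not ``drop out'' and still need the wsd argument, but since Lemma~\ref{orbi2} also covers sets $B\subset(\Q\alpha+\Q)\setminus(\Z\alpha+\Z)$, your main-case argument applies there verbatim.
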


\vskip 3mm {\bf Clusters of discontinuities}

 For a subset $C$ of discontinuities of $\Phi$, we denote $\sigma(C)
= \sum_{x_i \in C} \sigma(x_i)$ the corresponding sum of jumps of
$\Phi$. The number of discontinuities of $\Phi$ is $D=D(\Phi)$. The
following result can be useful when the discontinuities are not well
separated.
\begin{Th} \label{sepClust} Suppose that there are two discontinuities
$x_{i_0}, x_{j_0}$ of $\Phi$ and a subsequence $(q_{n_k})$ such that
for a constant $\kappa>0$ we have \beq\label{aa1}
q_{n_k}\|(x_{i_0}-x_{j_0}) - r\alpha\| \geq \kappa, \ \forall \
|r|<q_{n_k}. \eeq Then, if the sum $\sigma(C)$ is $\not = 0$ for
each non-empty proper subset $C$ of the set of discontinuities of
$\Phi$, then $\Phi$ has a non trivial essential value.
\end{Th}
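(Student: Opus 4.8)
The plan is to follow the quasi-period strategy of Theorem~\ref{wsd-erg-thm}, but to organize the discontinuities of $\Phi_{q_{n_k}}$ into \emph{clusters} instead of relying on uniform separation. Fix $q=q_{n_k}$ and, exactly as in the proof of Theorem~\ref{wsd-erg-thm}, write the values of $\Phi_q$ as lying in $R+\theta^{(q)}$, where $R$ is a fixed finite set of vectors and $\theta^{(q)}\to\theta$ after passing to a subsequence. Choose a threshold $\delta_q=\varepsilon/q$ and group the $Dq$ points of $\cd_q$ into clusters by declaring two cyclically consecutive discontinuities to lie in the same cluster when their gap is $<\delta_q$; then distinct clusters are separated by gaps $\geq\delta_q$, that is, by intervals of continuity of $\Phi_q$ of length $\geq\varepsilon/q$.

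The first key step is to bound the width of each cluster. By part~3) of Lemma~\ref{cont2qn} any arc of length $1/q$ contains at most $2D$ points of $\cd_q$, so a chain all of whose internal gaps are $<\delta_q$ cannot have more than $2D$ points once $\varepsilon<1/(2D)$; hence each cluster has width $<2D\varepsilon/q$. Choosing $\varepsilon<\min\!\big(1/(4D),\kappa/(2D)\big)$ forces every cluster to have width $<1/(2q)$ and $<\kappa/q$. Now part~4) of Lemma~\ref{cont2qn} shows that two discontinuities of the same type are more than $1/(2q)$ apart, while the hypothesis~(\ref{aa1}) shows that a point of type $x_{i_0}$ and a point of type $x_{j_0}$ are at least $\kappa/q$ apart. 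Consequently each cluster contains at most one discontinuity of each type, and never contains both an $x_{i_0}$- and an $x_{j_0}$-type point. Therefore the set $C$ of types occurring in a given cluster is a \emph{nonempty proper} subset of $\{x_1,\ldots,x_D\}$, the net jump of $\Phi_q$ across the cluster equals $\sigma(C)$, and this is nonzero by assumption.

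With this in hand I produce quasi-periods as in Theorem~\ref{wsd-erg-thm}. Just to the left of a cluster the value of $\Phi_q$ is constant, of the form $v+\theta^{(q)}$ with $v\in R$, and just to the right it equals $v+\theta^{(q)}+\sigma(C)$. Since each cluster carries $\leq 2D$ points there are at least $q/2$ clusters, while the pairs $(v,\sigma(C))$ range over a fixed finite set of cardinality $\leq|R|\cdot 2^D$; a pigeonhole argument then yields a single pair $(v_0,s_0)$, with $s_0=\sigma(C_0)\neq0$, realized by at least $q/(2|R|2^D)$ clusters, and passing to an infinite subset of $(n_k)$ keeps this pair fixed. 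The corresponding left intervals are disjoint, each of length $\geq\varepsilon/q$, so their union $A_q$ has measure $\geq\delta_0>0$ independent of $q$, with $\Phi_q=v_0+\theta^{(q)}$ on $A_q$; the adjacent right intervals likewise give a set $A_q'$ of measure $\geq\delta_0$ with $\Phi_q=v_0+\theta^{(q)}+s_0$. As $q_{n_k}$ is a rigidity sequence and $\theta^{(q)}\to\theta$, both $v_0+\theta$ and $v_0+\theta+s_0$ are quasi-periods in the sense of Definition~\ref{def-period}, hence belong to $\E(\Phi)$ by Lemma~\ref{lem-period}. Since $\E(\Phi)$ is a group, $s_0\in\E(\Phi)\setminus\{0\}$, the desired non-trivial essential value.

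The main obstacle is the cluster-width control in the second step: everything hinges on tuning $\delta_q$ so small that the counting bounds of Lemma~\ref{cont2qn} together with the separation~(\ref{aa1}) guarantee at most one discontinuity of each type per cluster and the exclusion of simultaneous $x_{i_0}$- and $x_{j_0}$-types. This is exactly what upgrades the per-cluster jump from an a priori integer combination of the $\sigma_i$ to an honest $\sigma(C)$ with $C$ proper and nonempty, so that the hypothesis $\sigma(C)\neq0$ can be invoked to ensure $s_0\neq0$.
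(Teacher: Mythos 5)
Your proof is correct, and its skeleton is the same as the paper's: cluster the discontinuities of $\Phi_{q_{n_k}}$ at scale $1/q_{n_k}$, use part 4) of Lemma~\ref{cont2qn} to ensure at most one discontinuity of each type per cluster and hypothesis~(\ref{aa1}) to forbid the types $x_{i_0}$ and $x_{j_0}$ from sharing a cluster, so that every cluster jump is $\sigma(C)$ with $C$ nonempty and proper (hence nonzero), and conclude by exhibiting two quasi-periods whose difference is such a jump, via Lemma~\ref{lem-period} and the group property of $\E(\Phi)$. Where you genuinely diverge is in how the clusters and the sets of positive measure are produced. The paper restricts attention to a family of at least $\delta_1 q_{n_k}$ translates $I_0^{(k)}+\{-j\alpha\}$ of a fixed interval of length $4/q_{n_k}$, extracts subsequences so that all of them carry the same ``pattern of discontinuities'', then makes the rescaled positions $q_{n_k}\{x_i-u_{n_k,i,r}\alpha\}$ converge and defines a cluster as a group of points with a common limit; the separation between clusters and the final measure estimate then come from this limiting configuration and from the count of good intervals. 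You instead cluster globally with a hard gap threshold $\varepsilon/q$, $\varepsilon<\min\bigl(1/(4D),\kappa/(2D)\bigr)$, and your one new ingredient --- the width bound $<2D\varepsilon/q$ for every cluster, deduced from the counting statement 3) of Lemma~\ref{cont2qn} --- replaces all of the paper's compactness extractions; the measure bound then follows from a pigeonhole of the $\geq q/2$ clusters over the finitely many pairs (left value in $R$, jump $\sigma(C)$), together with disjointness of the inter-cluster gaps of length $\geq\varepsilon/q$. Your route buys explicit constants, far fewer subsequence extractions, and a fully detailed justification of the final step, which in the paper is asserted rather compactly (``the cocycle at time $q_{n_k}$ is close to a non zero constant on a set which has a measure bounded away from 0''); the paper's route keeps track of the actual limiting configuration of clusters, which is the language reused in the examples following the theorem, but nothing in the theorem itself requires that extra information.
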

\begin{proof} By Lemma \ref{cont2qn} any interval of length $2/q_n$
on the circle contains at least one discontinuity of each type $x_i$
and at most 4 such discontinuities, therefore at most $4D(\Phi)$
discontinuities of $\Phi_{q_n}$.

Consider the sequence ${\cal Q} = (q_{n_k})$ of denominators
satisfying~(\ref{aa1}). On the circle $\T$ we will deal with
families of disjoint intervals of length $4/q_{n_k}$. In fact, we
consider families of the form $\big\{I_j^{(k)}:\: j \in J_k \subset
\{0, 1, ..., q_{n_k}-1\}\big\}$ with $I_j^{(k)} = I_0^{(k)} +
\{-j\alpha\}$, where $I_0^{(k)} = [0, 4/q_{n_k}]$ and $J_k$ is such
that its cardinality satisfies $|J_k| \geq \delta_1 q_{n_k}$ for a
fixed positive constant $\delta_1$.

The number of different ``patterns of discontinuities'' (i.e.\
consecutive types of discontinuities) which can occur altogether in
these intervals is finite (indeed, the length of a pattern of
discontinuity is bounded by $8D(\Phi)$). There are an infinite
subsequence of ${\cal Q}$ (still denoted by ${\cal Q}$) and a family
of intervals $I_0^{(k)} + \{-j\alpha\}$, $j \in J_k'$ with $|J_k'|
\geq \delta_2 q_{n_k}$ for a fixed positive constant $\delta_2$
(therefore with a total amount of measure bounded away from~0) such
that the same pattern of discontinuities occurs in each interval of
the family. For illustration, if the cocycle has 4 discontinuities
$x_1, x_2, x_3, x_4$, we can have for instance in each interval the
pattern $(x_1, x_3, x_4, x_3, x_2, x_1, x_2, x_4)$, corresponding in
a given interval to the ``configuration'' (a sequence of
discontinuities) of the form $(\{x_1 -\ell_{j,1} \alpha\}, \{x_3
-\ell_{j,2} \alpha\}, \{x_4 -\ell_{j,3} \alpha\}, \{x_3 -\ell_{j,4}
\alpha\}, \{x_2 -\ell_{j,5} \alpha\}, \{x_1 -\ell_{j,6} \alpha\},
\{x_2 -\ell_{j,7} \alpha\}, \{x_4 -\ell_{j,8} \alpha\})$.

Now, by taking a further subsequence of ${\cal Q}$ if necessary, we
will assure a convergence at scale $1/q_{n_k}$ for the
discontinuities in $I_j^{(k)}$. More precisely, observe that if
$\{x_i- \ell \alpha\} \in I_j^{(k)}$, then $\{x_i- \ell \alpha\} -
\{- j \alpha\} \in I_0^{(k)}$. Hence $\{x_i- (\ell - j) \alpha\}$ is
in $I_0^{(k)}$ and therefore it belongs to the set $\big\{\{x_i - u
\alpha\}:\: |u| < q_{n_k}\big\} \cap I_0^{(k)}$. Notice that this
set $\big\{\{x_i - u \alpha\}:\: |u| < q_{n_k}\big\} \cap I_0^{(k)}$
has at least 2 elements and has no more that 8 elements and it does
not depend on $j$ (when $k$ changes, the set $J'_k$ does and so $j$
are different for different $k$, however the common shift, namely
the shift by $j\alpha$, leads to points which will be common for all
$j\in J'_k$; on the other hand $r$ runs over a fixed set as the
patterns of discontinuities are the same regardless $k$ and $j$).
Therefore we can write it explicitly as $\big\{\{x_i- u_{n_k,i,r}
\alpha\}\big\}$.

We can extract a new subsequence of ${\cal Q}$ (for which we still
keep the same notation ${\cal Q} = (q_{n_k})$) such that for each
$\{x_i- u_{n_k,i,r} \alpha\}$ the sequence $q_{n_k} \{x_i-
u_{n_k,i,r} \alpha\}$ converges to a limit $y_{i,r}\in[0,4]$ when $k
\to \infty$. This is possible, since there is a finite number of
such points in $I_0^{(k)}$ for each $n_k$.

Therefore the configurations of discontinuities in the intervals
$I_j^{(k)}$ for $j \in J_k'$ are converging at the scale
$1/q_{n_k}$, i.e.\ after applying the affinities $x \to q_{n_k} (x -
\{-j\alpha\})$. We can group the discontinuities (of type) $x_i$
according to the value of the limit $y_{i,r}$.

We call ``clusters'' the subsets of discontinuity points in
$I_j^{(k)}$ with the same limit at the scale $q_{n_k}$ (hence, such
that the corresponding limits $y_{i,r}$ in $[0,4]$ coincide).
Observe that two discontinuities of the same type $x_i$ are at
distance $\geq {1 \over 2 q_{n_k}}$ by the point 4) of
Lemma~\ref{cont2qn} and therefore are not in the same cluster: a
cluster contains at most one discontinuity of a given type $x_i$. In
view of~(\ref{aa1}), the number of elements in a cluster is strictly
less than $D(\Phi)$ the number of discontinuities of $\Phi$.

By passing once more to a subsequence of ${\cal Q}$ (still denoted
by ${\cal Q}=(q_{n_k})$) if necessary, we extract a sequence of
families of disjoint ``good'' intervals of length $4/q_{n_k}$ with
the same configuration of clusters inside the intervals of a family.
There are at least three different clusters in each ``good''
interval (since for an interval of length $4/q_{n_k}$ a given type
of discontinuity occurs at least twice and must occur in different
clusters as shown above, moreover the number of elements in a
cluster is at most $D(\Phi)-1$). The clusters in each interval are
separated by more than $c/q_{n_k}$. As in the proof of Theorem
\ref{wsd-erg-thm}, the values of the cocycle at time $q_{n_k}$ are
$v + \theta^{(q_{n_k})}$ with $v$ in a fixed finite set and
$(\theta^{(q_{n_k})})$ a converging sequence.

For $k$ large, clusters of discontinuities are separated by
intervals of order $c_1/q_{n_k}$ for a fixed positive constant $c_1$
and there are at least 3 clusters in a ``good'' interval
$I^{(k)}_j$. The number of intervals in the families is greater than
a fixed fraction of $q_{n_k}$. It follows that, under the assumption
that the sum of jumps $\sigma(C)$ is $\not = 0$ for each non-empty
proper subset $C$ of the set of discontinuities of $\Phi$, the
cocycle at time $q_{n_k}$ is close to a non zero constant on a set
which has a measure bounded away from 0.

Therefore there $\Phi$ has a non trivial quasi-period, hence a non
trivial finite essential value.
\end{proof}

Recall that, by Remark~\ref{notAlpha}, if $x_1,\ldots,x_D$ are all
discontinuities of a step cocycle $\Phi$, then for $i\neq j$ we can
assume that $x_i-x_j$ is not a multiple of $\alpha$ modulo~1. Assume
that $\alpha$ is of bounded type. Then, fixing $i_0\neq j_0$ and
using Lemma~\ref{orbi2} to select a subsequence $(q_{n_k})$ so that
(\ref{aa1}) holds for a constant $\kappa>0$, the assumption of the
theorem are fulfilled.

\vskip 3mm {\it Example 1: cocycle with 3 discontinuities}

Let $\alpha$ be an irrational number {\it of bounded type}. Let
$\varphi$ be a scalar cocycle with 3 effective discontinuities $0,
\beta, \gamma$. The sum of jumps for the 3 discontinuities is 0, and
for subsets of 1 or of 2 discontinuities it is always non zero. If
$\beta$ (resp. $\gamma$) is not in $\Z\alpha + \Z$, by Lemma
\ref{orbi2} there are subsequences of denominators along which the
discontinuities of type $\beta$ (resp.$\ \gamma$) belong to clusters
which reduce to a single discontinuity or to two discontinuities.
Therefore, by Theorem \ref{sepClust} the group of finite essential
values does not reduce to $\{0\}$.

\vskip 3mm {\it Example 2: cocycle with 4 discontinuities}

Let us consider the $\R$-valued cocycle
$a(1_{[0,\beta)}(\cdot)-\beta) -(1_{[0,\beta)}(\cdot -
\gamma)-\beta)$ with $\beta<\gamma$.

There are 4 discontinuity points: $(0, \beta, \gamma, \beta +
\gamma)$ with respective jumps $+a$, $-a$, $-1$, $+1$. Assume that
$\beta$ is such that there is a subsequence $(q_{n_k})$ and a
constant $\kappa>0$ such that \beq\label{aa1z} q_{n_k}\|\beta -
r\alpha\| \geq \kappa, \ \forall \ |r|<q_{n_k}. \eeq

We apply the method of Theorem \ref{sepClust}, with the subsequence
$(q_{n_k})$. By the above condition on $\beta$, in a cluster we can
find either a single discontinuity, or two discontinuities of type
in $(0, \gamma)$, $(0, \beta + \gamma)$, $(\beta , \gamma) , (\beta,
\beta+\gamma)$ with respective sum of jumps: $a-1$, $a+1$, $-(a+1)$,
$-a +1$. The case of 3 discontinuities is excluded.

Therefore, if $a \not \in \{\pm 1\}$, we have a non trivial
essential value. When $a = -1$, then the cocycle reads
$-1_{[0,\beta)}(\cdot) -1_{[0,\beta)}(\cdot - \gamma) + 2 \beta$,
and by Theorem~\ref{boundTypeRed} or the method of
Proposition~\ref{qn00} we obtain a non trivial essential value.

So for the classification of the cocycle
$a(1_{[0,\beta)}(\cdot)-\beta) -(1_{[0,\beta)}(\cdot -
\gamma)-\beta)$ the only case to be considered is $a=1$. This leaves
open the question of the regularity of the cocycle
$1_{[0,\beta)}(\cdot) -(1_{[0,\beta)}(\cdot - \gamma)$ for $\alpha$
of bounded type and any $\beta, \gamma$.

\subsection{On the regularity of $\Phi_d$, $d=1,2,3$} \label{123}

{\bf d= 1, $\Phi_1= 1_{[0, \beta)} - \beta$}

\begin{Th}\label{case1} The cocycle $\Phi_\beta = 1_{[0,
\beta)} - \beta$ is regular over any irrational rotation.
\end{Th}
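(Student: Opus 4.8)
The plan is to exploit the fact that in dimension one the target group is $G=\R$, whose only closed subgroups are $\{0\}$, the discrete groups $c\Z$ (with $c>0$), and $\R$ itself; crucially, every \emph{nontrivial} closed subgroup of $\R$ has compact quotient. Since $\Phi_\beta$ has bounded variation and mean zero, it is recurrent by the Denjoy--Koksma inequality~(\ref{f_8}), so studying $\E(\Phi_\beta)$ is meaningful. I would split the argument according to whether or not $\beta\in\Z\alpha+\Z$.

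First I would dispose of the degenerate case $\beta\in\Z\alpha+\Z$. Here the two discontinuities $0$ and $\beta$ of $\Phi_\beta$ differ by an element of $\Z\alpha+\Z$, so by Remark~\ref{notAlpha} the function $1_{[0,\beta)}-\beta$ is itself a coboundary. A coboundary has $\E(\Phi_\beta)=\{0\}$ and is trivially regular, so this case is finished.

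The main case is $\beta\notin\Z\alpha+\Z$. Here I would simply invoke Remark~\ref{dim1Ess}, which combines Lemma~\ref{integEssVal} (applied when $\|q_n\beta\|\not\to0$) and Proposition~\ref{qn00} (applied when $\|q_n\beta\|\to0$, the only regime compatible with $\alpha$ not of bounded type) to conclude that $\E(\Phi_\beta)$ contains a positive integer. In particular $\E(\Phi_\beta)\neq\{0\}$, so it equals one of the nontrivial closed subgroups $c\Z$ or $\R$ of $\R$. In either case the quotient $\R/\E(\Phi_\beta)$ is compact (a circle, respectively a point). By the regularity criterion recalled in Section~\ref{prelimin}---a recurrent cocycle whose group of essential values has compact quotient in $G$ is regular---the cocycle $\Phi_\beta$ is regular, completing the proof.

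The argument is short because the genuine difficulty has already been absorbed into Remark~\ref{dim1Ess}: the hard part is producing \emph{some} nonzero essential value, and this must be carried out by two entirely different mechanisms depending on the diophantine behaviour of $\|q_n\beta\|$. Once a single nonzero essential value is available, the one-dimensionality of the target does the rest, since in $\R$ there is no room for a nontrivial closed subgroup with non-compact quotient. By contrast, for $d\ge2$ a proper closed subgroup such as a line can fail to have compact quotient, which is precisely why the higher-dimensional cases demand the much finer analysis of the later sections.
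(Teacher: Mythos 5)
Your proof is correct and follows essentially the same route as the paper's: the same case split on whether $\beta\in\Z\alpha+\Z$ (coboundary via Remark~\ref{notAlpha}), then Lemma~\ref{integEssVal} and Proposition~\ref{qn00} (i.e.\ Remark~\ref{dim1Ess}) to produce a positive integer in $\E(\Phi_\beta)$, and finally regularity. You merely make explicit the last step that the paper leaves implicit, namely that any nontrivial closed subgroup of $\R$ has compact quotient, so the compact-quotient criterion of Section~\ref{prelimin} applies.
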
 \begin{proof}
If $\beta \in \Z \alpha + \Z$, then $\Phi_\beta$ is a coboundary
(see Remark~\ref{notAlpha}). Suppose that $\beta \not \in \Z \alpha
+ \Z$. Then, by Lemma~\ref{integEssVal} and Proposition~\ref{qn00},
there is a positive integer in the group $\E(\Phi)$ (cf. Remark
\ref{dim1Ess}). Therefore the cocycle $\Phi_\beta$ is always
regular. \end{proof}

\begin{Remark}
If $\beta, \alpha, 1$ are independent over $\Q$, then by a result of
Oren (\cite{Or83}) the cocycle defined by $\Phi_\beta$ is ergodic.
\end{Remark}

\vskip 3mm {\bf d= 2, $\Phi_2= (1_{[0, \beta)} - \beta, 1_{[0,
\gamma)} - \gamma)$}

\vskip 2mm a) {\bf $\alpha$ of bounded type}

\begin{Th}\label{case11} If $\alpha$ is of bounded type, the cocycle
$\Phi_2= (1_{[0, \beta)} - \beta, 1_{[0, \gamma)} - \gamma)$ is
regular.
\end{Th}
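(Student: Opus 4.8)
The plan is to reduce, by a dichotomy on the rational relations between $1,\alpha,\beta,\gamma$, to the two regularity criteria already available in the bounded type case: Theorem~\ref{boundTypeRed} and Theorem~\ref{wsd-erg-thm}. First I would dispose of the degenerate configurations using Remark~\ref{notAlpha}, so that I may assume the three discontinuities $0,\beta,\gamma$ of $\Phi_2$ are distinct with $\beta,\gamma,\gamma-\beta\notin\Z\alpha+\Z$. Indeed, if $\beta\in\Z\alpha+\Z$ then $\varphi^1$ is a coboundary, $\Phi_2$ is cohomologous to $(0,\varphi^2)$, and regularity reduces to that of $\varphi^2$ by Remark~\ref{dodana}; if $\gamma-\beta\in\Z\alpha+\Z$ (in particular $\beta=\gamma$) then $\varphi^2$ is cohomologous to a translate of $\varphi^1$, so $\Phi_2$ becomes the image of the scalar cocycle $\varphi^1$ under $t\mapsto(t,t)$ and is regular by Lemma~\ref{nonregIm}; the symmetric cases are analogous. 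In each instance the regularity of the relevant scalar single-interval cocycle is Theorem~\ref{case1}.

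Now I split according to the rational relations. If $1,\alpha,\beta,\gamma$ are rationally independent, then $\Phi_2$ is exactly the cocycle of Theorem~\ref{boundTypeRed} for $d=2$, and regularity is immediate. So assume a non trivial relation $p\beta+q\gamma+r\alpha+s=0$ with $p,q,r,s\in\Z$ and $(p,q)\neq(0,0)$. In this case my aim is to produce an infinite set of denominators $\mathcal Q=(q_{n_k})$ along which $\Phi_2$ has well separated discontinuities in the sense of Definition~\ref{propert_def}, and then to apply Theorem~\ref{wsd-erg-thm} directly.

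To get the wsd property, recall that the discontinuities of $\Phi_{2,q}$ are the three families $\{-k\alpha\}$, $\{\beta-k\alpha\}$, $\{\gamma-k\alpha\}$, $0\le k<q$. Two points of one family are at distance $>1/(2q_n)$ by part~4) of Lemma~\ref{cont2qn}, while two points of distinct families are at distance at least $\min_{|j|<q}\|\delta-j\alpha\|$, where $\delta$ is the corresponding element of $\{\beta,\gamma,\gamma-\beta\}$. Hence it suffices to find one sequence $(q_{n_k})$ with $\|\delta-j\alpha\|\ge c/q_{n_k}$ for $0\le|j|\le q_{n_k}$ \emph{simultaneously} for all three $\delta$, and this is exactly where the relation enters. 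If $q\neq0$ then $\gamma\in\Q\beta+\Q\alpha+\Q$, whence $\{\beta,\gamma,\gamma-\beta\}\subset(\Q\beta+\Q\alpha+\Q)\setminus(\Z\alpha+\Z)$ and Lemma~\ref{orbi2} produces the desired common sequence. If $q=0$ then $\beta\in\Q\alpha+\Q$, so $\|\beta-j\alpha\|\ge c/q_n$ holds for $0\le|j|\le q_n$ and \emph{all} $n$ by part~2b) of Lemma~\ref{orbi1}, while $\{\gamma,\gamma-\beta\}\subset(\Q\gamma+\Q\alpha+\Q)\setminus(\Z\alpha+\Z)$ is controlled along a common subsequence by Lemma~\ref{orbi2} (with $\gamma$ in the role of the base point). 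Either way all three differences are separated along one subsequence, the wsd property holds, and Theorem~\ref{wsd-erg-thm} shows that $\E(\Phi_2)$ contains the three jumps $(1,1)$, $(-1,0)$, $(0,-1)$ — which generate $\Z^2$ — and that $\Phi_2$ is regular.

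The main obstacle is the simultaneous diophantine separation in the previous paragraph: a single application of Lemma~\ref{orbi2} only controls reals lying in one set $\Q\beta'+\Q\alpha+\Q$, so I must use the precise shape of the rational relation to place all of $\beta,\gamma,\gamma-\beta$ under a common subsequence, which is why the rationally independent case is pushed to Theorem~\ref{boundTypeRed} instead. Once a common wsd subsequence is secured, Theorem~\ref{wsd-erg-thm} delivers full regularity rather than a mere non trivial essential value, so no separate appeal to Corollary~\ref{dimension2} or Lemma~\ref{decomp2} is needed.
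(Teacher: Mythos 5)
Your proof is correct, but it takes a genuinely different route from the paper's own proof of Theorem~\ref{case11}. The paper argues in three steps: first it shows $\ce(\Phi_2)\neq\{0\}$ (bounded type plus Lemma~\ref{qnbeta0} makes Lemma~\ref{integEssVal} applicable to $\beta\notin\Z\alpha+\Z$); then it invokes the dimension-two criterion of Corollary~\ref{dimension2}, which reduces regularity of $\Phi_2$ to regularity of every scalar combination $a\varphi^1+b\varphi^2$; finally the combinations with three effective discontinuities are handled by the cluster method (Theorem~\ref{sepClust} and the 3-discontinuity example following its proof), which needs the diophantine separation~(\ref{aa1}) for only \emph{one} pair of discontinuities. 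You instead split on the rational relations between $1,\alpha,\beta,\gamma$: the independent case is sent to Theorem~\ref{boundTypeRed}, and in the dependent case you secure the full wsd property --- simultaneous separation of all three differences $\beta,\gamma,\gamma-\beta$ --- via Lemma~\ref{orbi1}~2b) and Lemma~\ref{orbi2}, and then conclude with Theorem~\ref{wsd-erg-thm}. This is essentially the alternative proof that the paper itself sketches in the Remark immediately after Theorem~\ref{case11}, and your handling of the mixed case $\beta\in\Q\alpha+\Q$, $\gamma\notin\Q\alpha+\Q$ is actually cleaner than the Remark's (which falls back on Theorem~\ref{ratReduc} and showing $d_1=2$): you observe that Lemma~\ref{orbi1}~2b) controls $\beta$ along \emph{every} $q_n$, while Lemma~\ref{orbi2} based at $\gamma$ controls $\gamma$ and $\gamma-\beta$ along a common subsequence, so the three separations hold simultaneously. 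As for what each approach buys: the paper's proof avoids any case analysis on rational relations and needs only the weak one-pair separation, but Corollary~\ref{dimension2} is special to $d=2$; your proof requires the case discussion and the stronger simultaneous diophantine control (whose necessity you correctly identify as the crux), but it yields $\Z^2\subset\ce(\Phi_2)$ and regularity in one stroke, dispenses with the scalar-reduction step, and rests on two ingredients (Theorem~\ref{boundTypeRed} and the wsd criterion) that are not tied to dimension two.
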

\begin{proof} Recall that we constantly assume that $\beta,\gamma, \beta-\gamma$
are not in $\Z\alpha+\Z$. The proof is done in three steps:

{\it Step 1.} \ $\ce(\Phi_2)\neq\{0\}$; indeed, this follows
immediately from the proof of Theorem~\ref{ratReduc} applied to
$\beta\notin\Z\alpha+\Z$ (see Lemma~\ref{qnbeta0} and
Lemma~\ref{integEssVal}).

{\it Step 2.} \ $\beta=\gamma$; then our cocycle is regular by
Theorem~\ref{case1}.

{\it Step 3.} \ $0<\beta<\gamma<1$. Now, we claim that for each
$a,b\in\R$ the cocycle $a(1_{[0,\beta)}-\beta)+
b(1_{[0,\gamma)}-\gamma)$ is regular. Indeed, we have already
noticed this property to hold if $a$ or $b$ is equal to zero. When
$a\neq0\neq b$, we obtain a step cocycle with 3 effective
discontinuities $0,\beta$ and $\gamma$. In that case we apply
Theorem~\ref{sepClust} (see the application to cocycles with 3
discontinuities, example 1 after the proof) to conclude that our
scalar cocycle has a non-zero finite essential value and hence is
regular. The claim immediately follows. The regularity of $\Phi$ is
now an immediate consequence of Corollary~\ref{dimension2}.
\end{proof}

\begin{Remark} Notice that we can apply other previous results
to obtain another, more complex proof of Theorem~\ref{case11}, which
however can be applied in other situations. Indeed, since $\alpha$
of bounded type, we apply Theorem~\ref{boundTypeRed} to conclude
that the cocycle $\Phi$ is regular whenever $\beta, \gamma, \alpha,
1$ are independent over $\Q$.

Otherwise, there are integers $r, s, v, w$ not all equal to zero
such that $r \beta + s \gamma + v\alpha +w = 0$.

The case when $\beta$ or $\gamma$ belongs to $\Z \alpha + \Z$ is
excluded (cf.\ Remark \ref{notAlpha}).

1) Assume that $\beta, \gamma \not \in \Q \alpha + \Q$ and $\beta -
\gamma\notin\Q\alpha+\Q$.

If $s$ or $r\neq0$, say $s\neq0$ then $\gamma = -{r \over s} \beta -
{v \over s} \alpha -{w \over s}$. We apply Lemma~\ref{orbi2} for
$\beta_1=\frac{1}{1}\beta+\frac{0}{1}\alpha+\frac{0}{1}$,
$\beta_2=\frac{-r}{s}\beta+\frac{-v}{s}\alpha+\frac{-w}{s}$ and
$\beta_3=\frac{-r-s}{s}\beta +\frac{-v}{s}\alpha+\frac{-w}{s}$ and
obtain a subsequence $(q_{n_k})$ along which the wsd property is
satisfied for the discontinuities of $\Phi_2$. Then
Theorem~\ref{wsd-erg-thm} applies.

2) Suppose $s =0$ and $\gamma\notin\Q\alpha+\Q$,
$\beta\in\Q\alpha+\Q$. It is enough to show that $d_1=2$ in
Theorem~\ref{ratReduc}. By the proof of that theorem applied to
$\beta\notin\Z\alpha+\Z$, in view of Lemma~\ref{qnbeta0}, we obtain
$M:\R^2\to\R^2$ a rational change of coordinates such that
$M\Phi_2=(\psi^1,\psi^2)$ has $(1,0)$ as its essential value. On the
other hand, by Lemma~\ref{rat3} (taking into account that ${\rm
det}\,M\neq0$) and remembering that under our assumption $\beta$ and
$\gamma$ are independent over $\Q$, we obtain that
$\beta(\psi^i)\notin\Z\alpha+\Z$, $i=1,2$. Therefore, again by
Lemma~\ref{qnbeta0}, ${\cal L}(\beta(\psi^i))\neq\{0\}$, hence by
the proof of Theorem~\ref{ratReduc}, $d_1=2$.

3) The missing case $\beta- \gamma \in \Q \alpha + \Q$ (see the
assumption in 1) and the separate case $\beta,\gamma\in
\Q\alpha+\Q$) are covered by Lemma \ref{orbi2} and an application of
Theorem \ref{wsd-erg-thm}.
\end{Remark}

\vskip 2mm b) {\bf $\alpha$ of non bounded type}

 For $d =2$ and $\alpha$ not of bounded type the question of construction of a
non regular step function is not solved and the purpose of this
paragraph is to present some observations.

 From Lemma~\ref{integEssVal} and Proposition~\ref{qn00}, we know that
$\E(\Phi)$ does not reduce to $\{0\}$. By
Corollary~\ref{dimension2}, the regularity of the cocycle is
equivalent to the regularity of the one dimensional cocycles with 3
discontinuities: $\varphi = a(1_{[0, \beta)} - \beta) - b(1_{[0,
\gamma)} - \gamma)$, where $a, b$ are arbitrary real numbers. Since
we know already that regularity holds for $b=0$, it suffices to
consider $\varphi = a1_{[0, \beta)} - 1_{[0, \gamma)} -(a\beta -
\gamma)$.

It is interesting to understand the particular case $\gamma = \ell
\beta$, with $\ell$ a positive integer. We will give some partial
results on this cocycle and ask questions.

First of all, there are special situations where one can conclude
that the cocycle $\varphi = \ell 1_{[0, \beta)} - 1_{[0,
\ell\beta)}$ is a coboundary (we assume that $\ell\beta<1$). We use
the following result of Guenais and Parreau (with the notation of
Section~\ref{subsectIrr}, in particular $Tx=x+\alpha$):

\begin{Th}\label{NSCdtion} (\cite{GuPa06}, Theorem 2)
Let $\varphi$ be a step function on $\T$ with integral 0 and jumps
$-s_j$ at distinct points $(\beta_j, 0\leq j\leq m$), $m\geq 1$, and
let $t\in \T$. Suppose that there is a partition $\cal P$ of
$\{0,\ldots,m\}$ such that for every $J\in {\cal P}$ and $\beta_J\in
\{\beta_j:\:j \in J\}$: \hfill \break (i) \ $\sum_{j\in J}s_j \in
\Z$; \hfill \break (ii)\ for every $j \in J$, there is a sequence of
integers $(b_n^j)_n$ such that
$$\beta_j= \beta_{J}+ \sum_{n\geq 0}
b_n^jq_n\alpha \mod 1, {\ with \ } \sum_{n\geq 0}
\frac{|b_n^j|}{a_{n+1}}<+\infty \, \hbox{\ and \ } \, \sum_{n\geq
0}\Bigl\|\sum_{j \in J}b_n^js_j\Bigr\|^2 <+\infty;$$ (iii) \ there
is an integer $k'$ such that $t=k'\alpha -\sum_{J\in {\cal P}}t_J$
where
$$t_J=\beta_J\sum_{j\in J}s_j
+\sum_{n\geq 0}\Bigl[\sum_{j\in J}b_n^js_j\Bigr]q_n\alpha \mod 1.$$

Then there is a measurable function $f$ of modulus 1 solution of
\begin{equation}
e^{2i\pi \varphi}= e^{2i\pi t} f \circ T/f. \label{MultEquat}
\end{equation}

Conversely, when $\sum_{j \in J} s_j \notin \Z$ for every proper non
empty subset $J$ of $\{0,..,m\}$, these conditions are necessary for
the existence of a solution of (\ref{MultEquat}).
\end{Th}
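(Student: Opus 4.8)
The plan is to read equation (\ref{MultEquat}) as the assertion that $\lambda=e^{2\pi i t}$ is an eigenvalue of the Anzai skew product $T_\varphi(x,z)=(Tx,e^{2\pi i\varphi(x)}z)$ on $\T\times\mathbb{S}^{1}$, carried by a degree-one eigenfunction $(x,z)\mapsto\overline{f(x)}\,z$, and then to exploit the rigidity of $T$ along the denominators $(q_n)$. Writing $f=e^{2\pi i g}$, (\ref{MultEquat}) is equivalent to the additive relation $\varphi-t=g\circ T-g\pmod 1$; summing it over a block of length $q_n$ telescopes to $e^{2\pi i(\varphi_{q_n}-q_n t)}=f\circ T^{q_n}/f$. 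Since $\|q_n\alpha\|\to0$ makes $T^{q_n}\to\Id$ in the strong operator topology, $f\circ T^{q_n}\to f$ in $L^2$, so every solution must satisfy
\beq\label{critL2}
e^{2\pi i(\varphi_{q_n}-q_n t)}\longrightarrow 1\quad\text{in }L^2(\T).
\eeq
This gives necessity of a convergence statement; for sufficiency I would not invoke (\ref{critL2}) abstractly but instead build $f$ directly, as an infinite product of elementary coboundaries indexed by the scales $n$, the summability in condition (ii) being exactly what forces this product to converge in $L^2$ to a unit-modulus function solving (\ref{MultEquat}). Thus the first step reduces the whole theorem to a precise analysis of $\varphi_{q_n}-q_n t$.

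Second, I would compute $\varphi_{q_n}$ explicitly. By the Denjoy--Koksma inequality (\ref{f_8}) the sums $\varphi_{q_n}$ form a bounded family of step functions, and by Lemma~\ref{cont2qn} their discontinuities, the points $\{\beta_j-k\alpha\}$ with $0\le k<q_n$, are well spread around the circle, so $\varphi_{q_n}$ takes only boundedly many values and differs from a constant only through accumulated jumps of sizes $s_j$. The arithmetic input is the Ostrowski-type expansion $\beta_j=\beta_J+\sum_{n\ge0}b_n^j q_n\alpha\pmod 1$, which records scale by scale how the discontinuity of type $\beta_j$ is displaced relative to the reference point $\beta_J$ of its block $J\in\mathcal P$. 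Grouping the discontinuities into the blocks according to which ones remain at bounded $q_n$-distance, one finds that off a set of small measure $\varphi_{q_n}-q_n t$ equals a sum of block contributions, each governed at scale $n$ by $\sum_{j\in J}b_n^j s_j$ together with an integer plateau height, plus the constant $-q_n t+\sum_{J}t_J$.

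Third, I would match this structure with (i)--(iii) in both directions. For $e^{2\pi i(\varphi_{q_n}-q_n t)}$ to approach $1$ on a set of almost full measure, the dominant plateau height of each block must be an integer, which is exactly (i), $\sum_{j\in J}s_j\in\Z$. The $L^2$-size of the remaining fluctuation of $\varphi_{q_n}$ is controlled, via a variance computation over the cylinder sets on which the expansion digits act, by the quantities $\|\sum_{j\in J}b_n^j s_j\|$ at each scale weighted by the measures of those sets; requiring (\ref{critL2}) then forces simultaneously $\sum_n|b_n^j|/a_{n+1}<\infty$ (the expansion must genuinely converge, so the relevant sets do not collapse) and $\sum_n\|\sum_{j\in J}b_n^j s_j\|^2<\infty$, which together are (ii). Finally the mean of $\varphi_{q_n}-q_n t$ must tend to $0\pmod 1$, and evaluating that mean pins the admissible constant to $t=k'\alpha-\sum_{J}t_J$ with $t_J$ as in (iii). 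For sufficiency one reverses these estimates: (ii) makes the telescoping product defining $f$ Cauchy in $L^2$, and (iii) cancels the residual constant, yielding an honest solution.

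The hard part will be the converse direction together with the uniform bookkeeping of Steps two and three. The delicacy is that a priori there could be cancellation between different blocks, or between consecutive scales, that rescues a non-summable configuration of the digits $b_n^j$; one must rule this out and show that $L^2$-convergence really forces term-by-term summability. This is precisely where the hypothesis that $\sum_{j\in J}s_j\notin\Z$ for every proper nonempty $J$ is used: it forbids spurious integer relations among partial sums of jumps, so that the partition $\mathcal P$ is the genuine one and no alternative grouping can produce integer plateaus by accident. Obtaining the clean separation of $\varphi_{q_n}$ into an "integer plateau'' part and an "$L^2$-fluctuation'' part, uniformly in $n$, and controlling the exceptional sets coming from Lemma~\ref{cont2qn}, is the technical heart of the argument.
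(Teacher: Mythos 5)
A point of order first: the paper does not prove this statement at all --- it is quoted as Theorem~2 of \cite{GuPa06} and used as a black box (in Proposition~\ref{exCob} and in the proof of Theorem~\ref{examplenonreg}). So there is no proof in the paper to compare yours against; your attempt has to be judged against what it would actually take to prove the Guenais--Parreau theorem itself.

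Judged on those terms, what you have written is a roadmap rather than a proof, and the gaps sit exactly at the points you yourself flag as ``the technical heart.'' Two of them are essential. For the converse direction: you propose to extract conditions (i)--(iii) from the rigidity criterion $e^{2\pi i(\varphi_{q_n}-q_n t)}\to 1$ in $L^2$ by ``a variance computation over the cylinder sets on which the expansion digits act,'' but this is circular --- the digits $b_n^j$ and the reference points $\beta_J$ are not given in advance; they must be \emph{constructed} from the solution $f$. The real content of the converse is that the mere existence of $f$ forces each difference $\beta_j-\beta_J$ to admit an Ostrowski expansion $\sum_n b_n^j q_n\alpha$ with the strong summability $\sum_n|b_n^j|/a_{n+1}<+\infty$, which is a restrictive diophantine property (cf.~(\ref{betaExpan})) and in no way automatic; nothing in your sketch produces these integers or that bound, and the passage from ``the plateau heights are nearly integers for most $n$'' to ``the displacements have summable digits'' is precisely the missing argument. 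For the direct (sufficiency) direction: the transfer function is announced as ``an infinite product of elementary coboundaries indexed by the scales $n$,'' but the elementary factors are never defined, and the convergence of the partial products --- which is where $\sum_n\bigl\|\sum_{j\in J}b_n^j s_j\bigr\|^2<+\infty$ enters, through an orthogonality/three-series type argument exploiting the near-independence of the scales $q_n$ --- is not carried out; that construction is not a routine verification, it \emph{is} the proof. Your structural instincts are sound (rigidity along $(q_n)$, Denjoy--Koksma (\ref{f_8}), Lemma~\ref{cont2qn} to control the spacing of discontinuities, and the correct identification of the role of the hypothesis $\sum_{j\in J}s_j\notin\Z$ for proper subsets in pinning down the partition), but as it stands the attempt establishes only the easy necessary condition $f\circ T^{q_n}/f\to1$ and defers everything that makes the theorem a theorem.
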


Take $\varphi = \ell 1_{[0, \beta]} - 1_{[0, \ell \beta]}$. With the
previous notation, the discontinuities are at $\beta_0 = 0, \beta_1
= \beta, \beta_2 = \gamma = \ell \beta$ ($m= 2$) with jumps $\ell
-1, -\ell , 1$ respectively and the partition ${\cal P}$ is the
trivial partition with the single atom $J = \{0,1,2\}$. We also have
$\beta_J = 0$, $\sum_{j\in J} s_j = 0$.

Suppose that the parameter $\beta$ has an expansion in base $(q_n
\alpha)$ (Ostrowski expansion, see \cite{IN}):
\begin{eqnarray}
\beta= \sum_{n\geq 0} b_n q_n\alpha \mod 1, {\rm \ with \ }
\sum_{n\geq 0} {|b_n| \over a_{n+1}}<+\infty, \ b_n \in \Z.
\label{betaExpan}
\end{eqnarray}
We can take $b_n^0 = 0, b_n^1 = b_n, b_n^2 = \ell b_n$, so that
$\sum_{j \in J} b_n^j s_j = \ell b_n - \ell b_n = 0$. In view of
Theorem~\ref{NSCdtion}, for every real $s$, the multiplicative
equation $e^{2\pi i s \varphi} = f\circ T/f$ has a measurable
solution $f:\T\to\bs^1$. By using Theorem~6.2 in \cite{MoSc80}, we
conclude that $\varphi$ is a measurable coboundary. Let us mention
that another proof based on the tightness of the cocycle
$(\varphi_n)$ can also be given.

Conversely, if $\varphi$ is a measurable coboundary, then $e^{2\pi i
s \varphi} =f\circ T/f$, for $s$ real has a measurable solution, and
this implies that $\beta$ has the expansion given
by~(\ref{betaExpan}).

Therefore we obtain:
\begin{Prop} \label{exCob} If $\ell $ is a positive integer with $\ell \beta<1$,
then the cocycle $\varphi = \ell 1_{[0, \beta)} - 1_{[0, \ell
\beta)}$ is a coboundary if and only if $\beta$ satisfies
(\ref{betaExpan}).
\end{Prop}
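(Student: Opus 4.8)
The plan is to read the statement off the Guenais--Parreau criterion (Theorem~\ref{NSCdtion}) applied to the real multiples $s\varphi$, combined with the Moore--Schmidt theorem (Theorem~6.2 in \cite{MoSc80}). Recall that $\varphi=\ell 1_{[0,\beta)}-1_{[0,\ell\beta)}$ (we may assume $\ell\geq2$, since for $\ell=1$ the cocycle is identically $0$) has its three discontinuities at $\beta_0=0$, $\beta_1=\beta$, $\beta_2=\ell\beta$, with jumps $\ell-1$, $-\ell$, $1$; in the notation of Theorem~\ref{NSCdtion} this reads $s_0=1-\ell$, $s_1=\ell$, $s_2=-1$, so that $\sum_j s_j=0$.

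For the sufficiency direction, I would assume the Ostrowski expansion~(\ref{betaExpan}) and set $b_n^0=0$, $b_n^1=b_n$, $b_n^2=\ell b_n$, taking the trivial partition ${\cal P}=\{\{0,1,2\}\}$ with $\beta_J=0$. The crucial input is the exact cancellation $\sum_j b_n^j s_j=\ell b_n-\ell b_n=0$, which makes condition~(ii) hold with the series $\sum_n\|\sum_j b_n^j s_j\|^2$ identically zero and forces $t_J=0$, hence $t=0$, in condition~(iii). Replacing $\varphi$ by $s\varphi$ only rescales the jumps to $ss_j$, and the same cancellation $\sum_j b_n^j (ss_j)=s\cdot 0=0$ persists; thus for \emph{every} real $s$ the equation $e^{2i\pi s\varphi}=f_s\circ T/f_s$ admits a measurable solution $f_s:\T\to\bs^1$ (with $t=0$). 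Feeding these character equations into the Moore--Schmidt theorem then yields that $\varphi$ is a measurable coboundary.

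For the necessity direction, suppose $\varphi=g-g\circ T$ for a measurable $g$. Then $e^{2i\pi s\varphi}=f_s\circ T/f_s$ with $f_s=e^{-2i\pi sg}$ is solvable for every $s$; I fix any irrational $s$. The scaled jumps $ss_j$ now satisfy $\sum_{j\in J}ss_j\notin\Z$ for every proper non-empty $J\subset\{0,1,2\}$, because $s$ is irrational and the relevant integers $1-\ell,\ \ell,\ 1,\ \ell-1$ are all non-zero. This is exactly the hypothesis under which the converse clause of Theorem~\ref{NSCdtion} makes conditions~(i)--(iii) necessary. Condition~(i) forces the partition to be the trivial one (any finer atom would carry a non-integer jump sum), and condition~(ii) then provides integers $(b_n^j)$ with $\beta=\beta_1-\beta_0=\sum_n(b_n^1-b_n^0)q_n\alpha\bmod 1$ and $\sum_n|b_n^1-b_n^0|/a_{n+1}\leq\sum_n(|b_n^1|+|b_n^0|)/a_{n+1}<\infty$, which is precisely~(\ref{betaExpan}).

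The main thing to get right is the interplay between the scaling parameter $s$ and the hypotheses of Theorem~\ref{NSCdtion}: the sufficiency side needs the cancellation to survive for \emph{all} $s$ (so that Moore--Schmidt applies and produces a genuine real coboundary, which is why securing $t=0$ rather than merely $t\in\Z\alpha$ matters), while the necessity side needs a \emph{single} irrational $s$ to render all proper-subset jump sums non-integral, thereby unlocking the converse clause of the criterion. Once these two observations are in place, the verification of~(i)--(iii) is routine bookkeeping, the only real content being the identity $\sum_j b_n^j s_j=0$.
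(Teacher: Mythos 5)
Your proof is correct and follows essentially the same route as the paper: both directions rest on the Guenais--Parreau criterion (Theorem~\ref{NSCdtion}) applied to the scaled cocycles $s\varphi$, using the trivial partition, the choices $b_n^0=0$, $b_n^1=b_n$, $b_n^2=\ell b_n$ and the cancellation $\sum_j b_n^j s_j=0$, together with Theorem~6.2 of \cite{MoSc80} to pass from the multiplicative equations to an additive coboundary. You in fact spell out details the paper leaves implicit (taking $s$ irrational so that the converse clause of Theorem~\ref{NSCdtion} applies, noting that condition~(i) forces the trivial partition, and restricting to $\ell\geq 2$), but the argument is the same.
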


{\bf Question:} A question is to know if the cocycle $\varphi = \ell
1_{[0, \beta)} - 1_{[0, \ell \beta)}$ is regular or not, when
$\beta$ has an expansion $\beta = \sum_{n\geq 0} b_n q_n\alpha \mod
1, {\rm \ with \ } \lim_n {|b_n| \over a_{n+1}} = 0 {\rm \ and \ }
\sum_{n\geq 0} \frac{|b_n|}{a_{n+1}} = +\infty$. (Notice that by
Theorem \ref{NSCdtion} it cannot be a coboundary.)

\vskip 3mm {\bf d= 3, $\Phi_3= (1_{[0, \beta)} - \beta, 1_{[0,
\gamma)} - \gamma,1_{[0, \delta)} - \delta)$}

\vskip 2mm We will consider $\alpha$ of non bounded type and
construct non regular cocycles (cf. \cite{Co09}). For $r \in \R$, we
denote by $\rho_r$ the translation $x \rightarrow x+r \hbox{ mod }
1$.

\begin{Th}\label{examplenonreg} Assume that $Tx=x+\alpha$ on the circle $\T$.
If $\alpha$ is not of bounded type, then there exists $\beta$ such
that $\varphi = 1_{[0,\beta)} - 1_{[0,\beta)}\circ \rho_r$ is a non
regular cocycle for $r$ in a set of full Lebesgue measure.
\end{Th}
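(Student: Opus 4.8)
The plan is to fix one well-chosen $\beta$ depending on $\alpha$ and then prove non-regularity of $\varphi_r:=1_{[0,\beta)}-1_{[0,\beta)}\circ\rho_r$ (the $\varphi$ of the statement) for Lebesgue-a.e.\ $r$. The first thing I would record is a clean dichotomy coming from integrality: $\varphi_r$ takes values in $\{-1,0,1\}\subset\Z$, so every partial sum $(\varphi_r)_N$ is $\Z$-valued, and hence the condition $[(\varphi_r)_N\in U]$ forces any finite essential value into $\Z$; thus $\E(\varphi_r)$ is a closed subgroup of $\Z$, i.e.\ $\{0\}$ or $m\Z$ with $m\ge1$. If $\E(\varphi_r)=m\Z$ then $\varphi_r+m\Z$ takes values in the finite (compact) group $\Z/m\Z$ and is regular, so $\varphi_r$ is regular. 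Since $\varphi_r$ has bounded variation and zero mean it is recurrent, and recurrence together with the equivalence ``coboundary $\iff\overline{\E}=\{0\}$'' shows that when $\E(\varphi_r)=\{0\}$ regularity is the same as being a coboundary. So the whole statement reduces to exhibiting $\beta$ for which, for a.e.\ $r$, one has simultaneously $\E(\varphi_r)=\{0\}$ and $\varphi_r$ is not a coboundary.

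For the choice of $\beta$ I would use that $\alpha$ is not of bounded type to select a sparse sequence $(n_k)$ with $a_{n_k+1}\to\infty$, so that $(q_{n_k})$ is a rigidity sequence ($q_{n_k}\|q_{n_k}\alpha\|\le 1/a_{n_k+1}\to0$), and then define $\beta$ by a lacunary Ostrowski expansion $\beta=\sum_k b_k\,q_{n_k}\alpha\pmod 1$ with integers $b_k$ arranged so that $\|q_n\beta\|\to0$ along the full sequence while $\sum_k|b_k|/a_{n_k+1}=+\infty$; by the criterion of Theorem~\ref{NSCdtion} (cf.\ the discussion of~(\ref{betaExpan}) and Proposition~\ref{exCob}) the divergence guarantees that $F:=1_{[0,\beta)}-\beta$ is not a coboundary. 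The key computation is the cancellation along $(q_n)$: writing $F_{q_n}(x)=u_{(q_n)}(x)-\{q_n\beta\}$ as in~(\ref{formCocy})–(\ref{11a}) one gets $(\varphi_r)_{q_n}(x)=u_{(q_n)}(x)-u_{(q_n)}(x+r)\in\Z$, the fractional terms cancelling, and with $\|q_n\beta\|\to0$ the integer function $u_{(q_n)}$ equals $1_{E_n}$ on an arc $E_n$ with $\mu(E_n)=\{q_n\beta\}\to0$ up to a null-tending set. Hence $(\varphi_r)_{q_n}\to0$ in measure, uniformly in $r$.

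To obtain $\E(\varphi_r)=\{0\}$ I must rule out every nonzero integer essential value, and this is the main obstacle. The distributional input above kills all quasi-periods arising from the rigidity sequence $(q_{n_k})$, but a finite essential value could a priori be produced at other return times $N$, and controlling those is the delicate point. I would organise this through the clustering picture of Theorem~\ref{sepClust}: the four discontinuities $0,\beta,-r,\beta-r$ of $\varphi_r$ carry jumps $+1,-1,-1,+1$, and the Ostrowski structure of $\beta$ forces $0$ with $\beta$ (and $-r$ with $\beta-r$) into common clusters at scale $1/q_{n_k}$, whose jump-sums $+1-1=0$ vanish; since every cluster has zero jump-sum, the increment of $\varphi_{q_{n_k}}$ across each cluster is $0$ and no nonzero value is forced. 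Upgrading this from ``not forced'' to ``impossible for all $N$'' — i.e.\ showing that the Denjoy–Koksma/Ostrowski decomposition of an arbitrary $N$ cannot assemble a nonzero integer essential value — is where I expect the real work, and I would follow the method of~\cite{Co09}. The outcome is $\E(\varphi_r)=\{0\}$ for all $r$ outside an exceptional set.

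Finally, $\varphi_r$ is a coboundary iff $F$ and $F\circ\rho_r$ are cohomologous; since $\rho_{r+r'}=\rho_r\circ\rho_{r'}$ the set $R:=\{r\in\T:\varphi_r\ \text{is a coboundary}\}$ is a subgroup of $\T$, and a routine argument shows it is measurable, so by the $0$–$1$ law for measurable subgroups $\mu(R)\in\{0,1\}$. If $\mu(R)=1$, then for a.e.\ $r$ we have $F\circ\rho_r-F=g_r-g_r\circ T$ with a measurably chosen transfer function $g_r$; averaging this identity over $r\in\T$ and using $\int_\T(F\circ\rho_r)\,dr=0$ gives $-F=\bigl(\int_\T g_r\,dr\bigr)-\bigl(\int_\T g_r\,dr\bigr)\circ T$, so $F$ would be a coboundary, contradicting our choice of $\beta$. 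Hence $\mu(R)=0$. Combining with the previous paragraph, for a.e.\ $r$ the cocycle $\varphi_r$ satisfies $\E(\varphi_r)=\{0\}$ and is not a coboundary, hence is non-regular, which is exactly the assertion.
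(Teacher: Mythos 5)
Your opening reduction is correct: since $\varphi_r$ is integer valued, $\E(\varphi_r)\subset\Z$; if $\E(\varphi_r)=m\Z$ with $m\geq1$ the cocycle is regular, so non-regularity is equivalent to having $\E(\varphi_r)=\{0\}$ while $\varphi_r$ is not a coboundary. But the half that carries all the weight --- proving $\E(\varphi_r)=\{0\}$ --- is missing from your proposal: you say yourself that upgrading the cluster picture from ``no nonzero value is forced'' to ``no nonzero essential value exists, at any return time $N$'' is where the real work lies, and you leave it to an unspecified application of the method of \cite{Co09}. That step is not routine: the construction of Theorem~\ref{sepClust} is a device for \emph{producing} essential values, and its failure to produce one says nothing about absence. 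The paper gets $\E(\varphi_r)=\{0\}$ by a completely different mechanism: it chooses $\beta$ satisfying~(\ref{betaExpan}) with \emph{convergent} sum $\sum_n |b_n|/a_{n+1}<\infty$, so that Merrill's theorem (\cite{Me85}, cf.\ Theorem~\ref{NSCdtion}) yields an \emph{irrational} $s$, a constant $|c|=1$ and a measurable $f$ with $e^{2\pi i s 1_{[0,\beta)}}=c\,f/f\circ T$. Then the constants cancel in the quotient, $e^{2\pi i s\varphi_r}$ is a multiplicative coboundary for \emph{every} $r$, hence $s\varphi_r=n+F-F\circ T$ with $n$ integer valued, so $\varphi_r$ is cohomologous to a cocycle with values in $\frac1s\Z$ and $\E(\varphi_r)\subset\Z\cap\frac1s\Z=\{0\}$. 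Your choice of $\beta$ goes in the opposite direction: you impose the \emph{divergent} sum $\sum_k|b_k|/a_{n_k+1}=+\infty$, which destroys the hypothesis of Theorem~\ref{NSCdtion} and thus blocks the only available route to $\E(\varphi_r)=\{0\}$. It is also unnecessary for your purpose, since $F=1_{[0,\beta)}-\beta$ fails to be a coboundary for every $\beta\notin\Z\alpha+\Z$ (otherwise $e^{-2\pi i\beta}$ would be an eigenvalue of $T$); note that Theorem~\ref{NSCdtion} and Proposition~\ref{exCob} concern integer-jump combinations such as $\ell 1_{[0,\beta)}-1_{[0,\ell\beta)}$, not $F$ itself. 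Worse, the paper explicitly records, right after Proposition~\ref{exCob}, that regularity of such difference cocycles when the Ostrowski sum diverges is an \emph{open question}, so your route heads into exactly the territory the authors could not resolve.

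A secondary problem is the last paragraph. The idea (coboundary for a positive-measure set of $r$ forces $F$ to be a quasi-coboundary, contradiction) is the paper's, but your averaging argument is not justified: the transfer functions $g_r$ are defined only up to constants, need not be integrable in $x$ nor jointly measurable and integrable in $(r,x)$, so $\int_\T g_r\,dr$ need not exist. The paper's Proposition~\ref{cobord} is designed precisely for this step; it passes to the multiplicative equations $e^{2\pi i s(\varphi-\varphi(g+\cdot))}$, invokes Lesigne \cite{Le93} to get, for every $s$, a constant $\lambda_s$ and a solution of $e^{2\pi i s\varphi}=\lambda_s\,\zeta_s\circ T/\zeta_s$, and then applies Moore--Schmidt \cite{MoSc80} to conclude $F=b+h\circ T-h$, with $b=0$ by recurrence. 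Replacing your averaging by a citation of Proposition~\ref{cobord} repairs this part; the gap in the essential-value step, however, is fundamental.
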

\begin{proof} \ By a result of Merril (\cite{Me85}, Theorem 2.5 therein, see also
Theorem \ref{NSCdtion} above from \cite{GuPa06}), we know that, if
$\beta$ satisfies~(\ref{betaExpan}), then there is an uncountable
set of real numbers $s$ (so containing irrational numbers) such that
we can solve the following quasi-coboundary multiplicative equation
in $(s,\beta)$: for $s\in\R$ there exist $|c|=1$ and a measurable
function $f :\T\to \bs^1$ such that $e^{2\pi i s1_{[0,\beta)}}= c {f
/ f \circ T}$.

For this choice of $\beta$ and $s$ ($s$ is irrational), $e^{2\pi i
s(1_{[0,\beta)} - 1_{[0,\beta)}\circ \rho_r)}$ is a multiplicative
coboundary for every $r$.

For the integer valued cocycle $\psi_r=1_{[0,\beta)} - 1_{[0,\beta)}
\circ \rho_r$ we obviously have $\E(\psi_r) \subset \Z$. On the
other hand, $s \,\psi_r(x)= n(x)+F(x)-F(x+\alpha)$, with $F:X\to\R$
and $n(\cdot):X\to\Z$ measurable. Therefore $\psi_r(x) = s^{-1}
n(x)+ s^{-1} F(x)- s^{-1} F(x+\alpha)$.

It follows that the group of finite essential values over $T$ of the
cocycle $\psi_r$ is also included in the group $\frac1s\Z$ and
therefore $\overline \E(\psi_r) \subset \{0, \infty\}$.

This implies that $\psi_r$ is either non regular or a coboundary
(cf.\ Subsection \ref{prelimin}). The latter case cannot occur for a
set of values of $r$ of positive measure, because otherwise, by
Proposition \ref{cobord} below, $1_{[0,\beta)} - \beta$ is an
additive coboundary up to some additive constant $c$ (and
necessarily $c =0$, since the cocycle defined by $1_{[0,\beta)} -
\beta$ is recurrent). But this would imply that $e^{2\pi i \beta}$
is an eigenvalue of the rotation by $\alpha$, a contradiction.

Therefore the cocycle $1_{[0,\beta)} - 1_{[0,\beta)}\circ \rho_r$ is
non regular for a.e. $r \in \R$. \end{proof} \vskip 3mm
\begin{Prop}\label{cobord} Assume that $K$ is a compact connected
Abelian (monothetic) group. Let $T$ be an ergodic rotation on $K$.
Let $\varphi:K\to\R$ be a cocycle. Assume moreover, than on a set of
$g\in K$ of positive Haar measure we can find a measurable function
$\psi_g:K\to\R$ such that \begin{eqnarray} \varphi -
\varphi(g+\cdot) = \psi_g \circ T - \psi_g. \label{eq-lin}
\end{eqnarray}
Then $\varphi$ is an additive quasi-coboundary, i.e.\ $\varphi = b +
h \circ T - h$, for a measurable function $h:K\to\R$ and a constant
$b\in\R$.
\end{Prop}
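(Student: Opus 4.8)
The plan is to turn the hypothesis into a functional equation and to recognise $g\mapsto\psi_g$ as a cocycle for the translation action of $K$ on itself, which over a (simply) transitive action must be a coboundary; its transfer function will be, up to sign, the function $h$ we seek. Since the conclusion allows an additive constant, I may subtract $\int_K\varphi\,dm_K$ (normalized Haar measure) and assume $\int_K\varphi\,dm_K=0$, the goal becoming that $\varphi$ is an honest coboundary; in the applications $\varphi$ is a bounded step function, so the integrability used below is automatic. Writing $R_g\varphi:=\varphi(\cdot+g)$, I note that $R_g$ and $T$ commute because $K$ is abelian, so $R_g(f\circ T)=(R_gf)\circ T$. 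I then fix a jointly measurable choice $g\mapsto\psi_g$ of solutions of $\varphi-R_g\varphi=\psi_g\circ T-\psi_g$ (each unique up to a constant by ergodicity of $T$), normalized by $\int_K\psi_g\,dm_K=0$, and I denote by $G_0$ the positive-measure set of admissible $g$.

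The key algebraic step is the following. For $g,g'$ with $g,g',g+g'\in G_0$, I expand $\varphi-R_{g+g'}\varphi=(\varphi-R_g\varphi)+R_g(\varphi-R_{g'}\varphi)$ and compare the two resulting expressions for the same coboundary; ergodicity of $T$ forces
$$\psi_{g+g'}=\psi_g+R_g\psi_{g'}+c(g,g')$$
for a constant $c(g,g')$, and integrating against $m_K$ (translation invariance, zero means) gives $c\equiv0$. Thus $(g,x)\mapsto\psi_g(x)$ is a genuine $\R$-valued cocycle for the action $x\mapsto x+g$, i.e. $\psi_{a+b}(z)=\psi_a(z)+\psi_b(z+a)$. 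Because this action is transitive, the cocycle is a coboundary: I set
$$F(x):=\int_K\psi_{x-y}(y)\,dm_K(y),$$
and, since the cocycle identity gives $\psi_{(x-y)+g}(y)-\psi_{x-y}(y)=\psi_g(x)$ independently of $y$, I obtain $F(x+g)-F(x)=\psi_g(x)$ for a.e. $g\in G_0$.

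Finally I substitute this back. With $H:=F\circ T-F$ and $T(x+g)=Tx+g$, the right-hand side of the hypothesis becomes $\psi_g(Tx)-\psi_g(x)=[F(Tx+g)-F(x+g)]-[F(Tx)-F(x)]=H(x+g)-H(x)$, so that $\varphi(x)-\varphi(x+g)=H(x+g)-H(x)$; equivalently $\varphi+H$ is $R_g$-invariant for every $g\in G_0$. As $G_0$ has positive Haar measure, Steinhaus' theorem makes $G_0-G_0$ a neighbourhood of $0$, and connectedness of $K$ shows $G_0$ generates $K$; since invariance propagates along sums and inverses, $\varphi+H$ is invariant under all of $K$, hence a.e. equal to a constant $b$. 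Therefore $\varphi=b-H=b+h\circ T-h$ with $h:=-F$, which is exactly the claimed quasi-coboundary form.

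The main obstacle is measure-theoretic rather than algebraic. Three points need real care: the joint measurability of $(g,x)\mapsto\psi_g(x)$ and the legitimacy of the normalization that kills $c(g,g')$; the integrability making $F(x)=\int_K\psi_{x-y}(y)\,dm_K(y)$ converge for a.e.\ $x$ (by Fubini this is the assertion $\psi\in L^1(K\times K)$, where one uses that $\varphi$ is bounded); and, most delicately, the passage from $G_0$ to all of $K$, since $F$ evaluates $\psi$ at every first argument $x-y$. I would treat the last point by first extending $\psi_g$ to all $g\in K$ through the cocycle relation — for a.e.\ $g$ one writes $g$ as a finite sum of elements of $\pm G_0$ (possible because $G_0-G_0$ is a neighbourhood of $0$ and $K$ is connected) and checks, using the cocycle identity, that the extension is well defined up to the additive constants $c$, which in any event cancel in the final substitution. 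Verifying that this extension is consistent and measurable is the technical heart of the argument.
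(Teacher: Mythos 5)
Your overall skeleton (translation cocycle $g\mapsto\psi_g$, cocycle identity via ergodicity of $T$, trivialization over the transitive $K$-action, then Steinhaus plus connectedness to force $\varphi+H$ constant) is a genuinely different route from the paper's, and several of its pieces are sound: the identity $\psi_{g+g'}=\psi_g+R_g\psi_{g'}+c(g,g')$ is correct, the final Steinhaus/connectedness step works, and the extension from $G_0$ to $K$ is in fact easier than you fear, since the set of $g$ for which $\varphi-R_g\varphi$ is a $T$-coboundary is a \emph{subgroup} of $K$ of positive measure, hence open, hence all of $K$ by connectedness. However, there is a genuine gap at the step you rely on to kill the constants $c(g,g')$: the normalization $\int_K\psi_g\,dm_K=0$. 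Transfer functions of measurable coboundaries over ergodic rotations need \emph{not} be integrable, even when $\varphi$ is bounded or continuous (this goes back to classical examples of Anosov for irrational rotations: smooth functions that are measurable coboundaries with non-integrable transfer function; ergodicity then fixes the transfer up to a constant, so no representative is integrable). So your claim that boundedness of $\varphi$ makes the integrability ``automatic'' is false, and with it collapses the zero-mean normalization, the cancellation $c\equiv0$, and the Fubini definition $F(x)=\int_K\psi_{x-y}(y)\,dm_K(y)$. The last of these is repairable once the exact cocycle identity is available (evaluate at a generic point: $F(x):=\psi_{x-x_0}(x_0)$ gives $\psi_g(x)=F(x+g)-F(x)$ with no integration), but the vanishing of the obstruction $c(g,g')$ -- a class in the measurable $2$-cohomology $H^2(K,\R)$ -- is precisely the heart of the matter and is left unproved; the joint measurability of $(g,x)\mapsto\psi_g(x)$, which you also assume, requires a nontrivial measurable-selection argument as well.

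For comparison, the paper sidesteps all of these issues by exponentiating: for every $s\in\R$ the hypothesis gives that $e^{2\pi is\varphi}/e^{2\pi is\varphi(g+\cdot)}$ is a multiplicative coboundary, and then two known results are invoked -- Proposition 3 of Lesigne \cite{Le93}, which yields $\lambda_s$ of modulus one and measurable $\zeta_s$ with $e^{2\pi is\varphi}=\lambda_s\,\zeta_s\circ T/\zeta_s$ (this is exactly where the selection and cohomological-obstruction difficulties are absorbed, in the circle-valued setting where everything is bounded), and Theorem 6.2 of Moore--Schmidt \cite{MoSc80}, which converts ``$e^{2\pi is\varphi}$ is a quasi-coboundary for all $s$'' into the additive statement $\varphi=b+h\circ T-h$. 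If you want to complete your direct real-valued argument, the missing ingredient is a proof that the $2$-cocycle $c(g,g')$ can be removed by a measurable choice of constants without any integrability assumption; as written, that step is a genuine gap, not a routine verification.
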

\begin{proof} \ \ For $g\in K$ satisfying~(\ref{eq-lin}) and arbitrary
$s \in \R$ we have:
$${e^{2\pi i s\varphi(x)} \over e^{2\pi i s\varphi(g+x)}} = {e^{2\pi i
s\psi_g(T x)} \over e^{2\pi i s\psi_g(x)}}.$$

According to Proposition 3 in \cite{Le93}, for every $s$ there exist
$\lambda_s$ with $|\lambda_s|=1$ and a measurable function
$\zeta_s:X\to\bs^1$ such that $e^{2\pi i s\varphi} = \lambda_s \cdot
\zeta_s \circ T/ \zeta_s$. By Theorem 6.2 in \cite{MoSc80}, the
result follows.
\end{proof}

\vskip 3mm
\begin{Remark} 1) \ If $\beta$
satisfies~(\ref{betaExpan}), then either $1_{[0,\beta)} -
1_{[0,\beta)}\circ \rho_r$ is non regular or is a coboundary. We
have shown that the latter case can occur only for $r$ in a set of
zero measure. A problem is to explicit values of $r$ for which
$1_{[0,\beta)} - 1_{[0,\beta)}\circ \rho_r$ is not a coboundary.

2) If $\psi_{\beta, \frac12} := 1_{[0,\beta)} - 1_{[0,\beta)}\circ
\rho_\frac12$ is non regular, then $\psi_{[\frac12 - \beta,
\frac12)} := 1_{[0,\frac12 - \beta)} - 1_{[0, \frac12 - \beta)}\circ
\rho_\frac12$ is regular. Indeed the sum of these two cocycles is
$1_{[0,\frac12)} - 1_{[\frac12,1)}$. It can be easily shown that
this latter cocycle has non trivial quasi periods. The non
regularity of $\psi_{\beta, \frac12}$ implies that $(\psi_{[\beta,
\frac12)})_{q_n}$, the cocycle at times $q_n$, tends to 0 in
probability, so that $\psi_{[\frac12 - \beta, \frac12)}$
 has non trivial quasi periods.
\end{Remark}

\begin{Cor} \label{examplenonreg2} There are values of the parameters
$(\beta, \gamma, \delta)$ such that
$$\Phi_3= (1_{[0, \beta)} - \beta, 1_{[0, \gamma)} - \gamma,1_{[0, \delta)} - \delta)$$
is non regular.
\end{Cor}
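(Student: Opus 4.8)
The plan is to realize the non regular scalar cocycle produced in Theorem~\ref{examplenonreg} as the image of $\Phi_3$ under a single fixed linear map, and then to conclude by Lemma~\ref{nonregIm}. Throughout we keep the standing assumption of this subsection that $\alpha$ is \emph{not} of bounded type. Under this hypothesis Theorem~\ref{examplenonreg} provides a value of $\beta$ (satisfying the Ostrowski type expansion~(\ref{betaExpan})) for which the integer valued cocycle
\[
\psi_r := 1_{[0,\beta)} - 1_{[0,\beta)}\circ\rho_r
\]
is non regular for every $r$ in a set $E\subset\R$ of full Lebesgue measure. First I would fix one such $r$, chosen moreover in the interval $(\beta,1)$; this is legitimate since $E\cap(\beta,1)$ still has positive measure. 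The role of the restriction $r>\beta$ is purely to prevent the translated interval from wrapping around $0$, as made precise below.

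Next I would rewrite $\psi_r$ as a centred combination of three indicator cocycles, following exactly the computation indicated in Remark~(b) after Theorem~\ref{boundTypeRed}. Since $1_{[0,\beta)}\circ\rho_r(x)=1_{[0,\beta)}(x+r)$ equals $1$ precisely when $\{x+r\}\in[0,\beta)$, and since $r>\beta$ forces $1-r+\beta<1$, one checks that
\[
1_{[0,\beta)}\circ\rho_r = 1_{[1-r,\,1-r+\beta)} = 1_{[0,\,1-r+\beta)} - 1_{[0,\,1-r)}.
\]
Setting $\gamma:=1-r+\beta$ and $\delta:=1-r$, both lying in $(0,1)$, and subtracting means, the constant $\beta-\gamma+\delta$ vanishes, so that
\[
\psi_r = (1_{[0,\beta)}-\beta) - (1_{[0,\gamma)}-\gamma) + (1_{[0,\delta)}-\delta) = M\,\Phi_3,
\]
where $\Phi_3=(1_{[0,\beta)}-\beta,\,1_{[0,\gamma)}-\gamma,\,1_{[0,\delta)}-\delta)$ and $M:\R^3\to\R$ is the continuous group homomorphism $M(y_1,y_2,y_3)=y_1-y_2+y_3$. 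A generic choice of $r$ in $E\cap(\beta,1)$ moreover makes $\beta,\gamma,\delta$ pairwise distinct, so $\Phi_3$ genuinely has the form stated in the corollary.

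Finally, since $M\Phi_3=\psi_r$ is non regular and $M$ is a group homomorphism, Lemma~\ref{nonregIm}, in the contrapositive form recorded in the remark following it (a cocycle admitting a non regular quotient is itself non regular), forces $\Phi_3$ to be non regular. This establishes the corollary for the parameters $(\beta,\gamma,\delta)=(\beta,\,1-r+\beta,\,1-r)$. I expect the one genuinely substantive ingredient to be the appeal to Theorem~\ref{examplenonreg}, which already carries the analytic heart of the construction; the present argument is essentially bookkeeping, the only point requiring care being the verification that the translate $[1-r,\,1-r+\beta)$ does not cross $0$, which is exactly what the choice $r>\beta$ secures.
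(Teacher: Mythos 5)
Your proposal is correct and follows essentially the same route as the paper: both arguments realize the non-regular scalar cocycle $1_{[0,\beta)}-1_{[0,\beta)}\circ\rho_r$ from Theorem~\ref{examplenonreg} as the image of $\Phi_3$ under a fixed linear form and then conclude by the contrapositive of Lemma~\ref{nonregIm}. The only difference is bookkeeping: the paper takes $\delta=\beta+\gamma$ and the map $(y_1,y_2,y_3)\mapsto y_1+y_2-y_3$, while you take $\gamma=1-r+\beta$, $\delta=1-r$ and the map $(y_1,y_2,y_3)\mapsto y_1-y_2+y_3$, which is the same construction up to relabeling the parameters.
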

\begin{proof} Suppose that $0 < \beta < \gamma < \delta$ and $\delta = \beta + \gamma$.
By applying the map $(y_1, y_2, y_3) \to y_1 + y_2 -y_3$, we obtain
the 1-dimensional cocycle $1_{[0, \beta)}(\cdot) - 1_{[0, \beta)}
(\cdot+ \gamma)$, which is non regular by
Theorem~\ref{examplenonreg} for a value of the parameter $\beta$
satisfying~(\ref{betaExpan}) and almost all $\gamma$.
Lemma~\ref{nonregIm} implies the non regularity of $\Phi_3$ for
these values of the parameters.
\end{proof}

Note that for $d= 2$, i.e.\ for two parameters ($\beta, \gamma)$, an
attempt to obtain a non regular cocycle is to take $\gamma = 2
\beta$ and the linear combination: $2(1_{[0, \beta)}(\cdot) - \beta)
- (1_{[0, 2\beta)} (\cdot) - 2\beta) = 1_{[0, \beta)}(\cdot) -
1_{[0, \beta)} (\cdot+ \beta)$. We obtain the cocycle discussed
above (cf.\ Proposition~\ref{exCob}) and the question previously
mentioned above is whether there are values of $\beta$ such that it
is non regular.

\vskip 3mm
\section{Application to affine cocycles}

We consider now the affine cocycle
$$\Psi_{d+1}(x):=(\psi(x),
\psi(x+\beta_1),..., \psi(x+\beta_{d})), {\rm \ where \ } \psi(x)
=\{x\} - \frac12.$$

\subsection{Reduction to a step function}

\vskip 3mm By a straightforward calculation we have the following
formula for the cocycle $\psi$: \beq\label{p1} \psi_{\qn}(x)=\qn
x+\frac{\qn(\qn-1)}{2}\alpha-\frac{\qn}{2}+M(x), \eeq where $M$ is a
(non $1$-periodic) function with values in $\Z$. It follows that,
for $\beta \in [0,1)$, \beq \label{p12} \psi_{\qn}(\{x+\beta\}) \eeq
$$=\left\{\begin{array}{lll}\psi_{\qn}(x)+\qn\beta
+(M(x+\beta)-M(x)),& \text{ if } &x+\beta<1, \\
\psi_{\qn}(x)+(\qn\beta-\qn) +(M(\{x+\beta\})-M(x)),& \text{ if }
&1\leqslant x+\beta<2.\end{array}\right.$$

We will reduce the cocycle $\Psi_{d+1}$ to step cocycles using the
group of finite essential values.

\begin{Prop} \label{reduc1}
The group $\E(\Psi_{d+1})$ includes $\Delta_{d+1}=\{(t,...,t):\: t
\in \R\}$, the diagonal subgroup of $\R^{d+1}$.
\end{Prop}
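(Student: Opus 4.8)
The plan is to realize $\Delta_{d+1}$ as a subgroup of $\E(\Psi_{d+1})$ by producing, along the denominators $q_n$, enough quasi-periods of the form $(t,\dots,t)$, and then to invoke that $\E(\Psi_{d+1})$ is a closed group. First I would record the decomposition $\Psi_{d+1}=\psi\cdot(1,\dots,1)+(0,g_1,\dots,g_d)$, where $g_j(x)=\psi(x+\beta_j)-\psi(x)=\beta_j-1_{[1-\beta_j,1)}(x)$ is a centered step function; thus $(\Psi_{d+1})_{q_n}(x)=\big(\psi_{q_n}(x),\psi_{q_n}(x)+(g_1)_{q_n}(x),\dots,\psi_{q_n}(x)+(g_d)_{q_n}(x)\big)$. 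Since $\psi$ and the $g_j$ have bounded variation and integral $0$, the Denjoy--Koksma inequality~(\ref{f_8}) shows that $(\Psi_{d+1})_{q_n}$ stays in a fixed compact subset of $\R^{d+1}$; as $\|q_n\alpha\|\to0$, $(q_n)$ is a rigidity sequence, and after passing to a subsequence $\big((\Psi_{d+1})_{q_n}\big)_*\mu\to\rho$ weakly. By Proposition~\ref{supp} it then suffices to locate enough of the diagonal inside $\mathrm{supp}(\rho)\subset\overline{\E}(\Psi_{d+1})$.

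Next I would analyse the two pieces along this subsequence. Each $(g_j)_{q_n}(x)=q_n\beta_j-\#\{0\le k<q_n:\{x+k\alpha\}\in[1-\beta_j,1)\}$ is congruent to $\{q_n\beta_j\}$ modulo $\Z$ and is bounded, so it takes only finitely many values; refining the subsequence I may assume $\{q_n\beta_j\}\to\delta_j$ and that $\big((g_1)_{q_n},\dots,(g_d)_{q_n}\big)$ ranges over a fixed finite set whose elements converge to limits $a^{(c)}\in\R^d$. Writing $B^{(c)}_n$ for the corresponding level sets (a finite partition of $\T$), I pick a class $c_0$ with $\liminf_n\mu(B^{(c_0)}_n)=:m_{c_0}>0$. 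The key structural input is that, off its $q_n$ discontinuities $\{-k\alpha\}$ ($0\le k<q_n$), $\psi_{q_n}$ is piecewise linear with slope $q_n$ (cf.~(\ref{p1})), and it has exactly $q_n$ such linear pieces; hence $\mu\big(\{x:\psi_{q_n}(x)\in[v-\varepsilon,v+\varepsilon]\}\big)\le2\varepsilon$ for every $v$ and every $\varepsilon>0$. Consequently any weak limit $\lambda$ of the restricted push-forwards $(\psi_{q_n})_*(\mu|_{B^{(c_0)}_n})$ is absolutely continuous with density $\le2$ and total mass $m_{c_0}>0$, so $\mathrm{supp}(\lambda)$ has Lebesgue measure at least $m_{c_0}/2>0$.

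For each $t\in\mathrm{supp}(\lambda)$ the point $\big(t,t+a^{(c_0)}_1,\dots,t+a^{(c_0)}_d\big)$ lies in $\mathrm{supp}(\rho)$: on $B^{(c_0)}_n$ the cocycle equals $\big(\psi_{q_n}(x),\psi_{q_n}(x)+a^{(c_0)}_{1},\dots,\psi_{q_n}(x)+a^{(c_0)}_{d}\big)+o(1)$, so every neighbourhood of this point receives $\rho$-mass bounded below by $\lambda$ of a neighbourhood of $t$. Being a finite point, it is an essential value. Finally, for any $t,t'\in\mathrm{supp}(\lambda)$ the difference of the two associated essential values is $(t-t')(1,\dots,1)\in\E(\Psi_{d+1})$, so $S:=\{s\in\R:(s,\dots,s)\in\E(\Psi_{d+1})\}$ contains $\mathrm{supp}(\lambda)-\mathrm{supp}(\lambda)$. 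Since $\mathrm{supp}(\lambda)$ has positive Lebesgue measure, the Steinhaus theorem provides an interval $(-\eta,\eta)\subset S$; as $S$ is a subgroup of $\R$, it equals $\R$, i.e.\ $\Delta_{d+1}\subset\E(\Psi_{d+1})$.

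I expect the main obstacle to be precisely the unavoidable nonzero shifts $a^{(c_0)}_j$: whenever $\|q_n\beta_j\|\not\to0$ the components of $(\Psi_{d+1})_{q_n}$ cannot be made simultaneously small, so no diagonal quasi-period can be read off directly (Lemma~\ref{quasi} would yield only some $(t,t+a^{(c_0)})\in\E$). The device that defeats this is to freeze the step-function part on a single level set $B^{(c_0)}_n$ and to subtract two essential values sharing the same shift $a^{(c_0)}$, which cancels it; ensuring that this produces all of $\Delta_{d+1}$ rather than a discrete subgroup is what forces the positive-measure/Steinhaus input, and hence the slope-$q_n$ density estimate for $\psi_{q_n}$ is the real crux of the argument.
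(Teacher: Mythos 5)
Your proof is correct and follows essentially the same route as the paper's: push forward $\mu$ under $(\Psi_{d+1})_{q_n}$ along a subsequence where $\{q_n\beta_j\}$ converges, observe that the coordinate differences are $\{q_n\beta_j\}$ plus integer-valued functions (so the limit measure is supported on countably many lines parallel to $\Delta_{d+1}$), use absolute continuity of the diagonal-direction projection to produce an uncountable set of essential values on a single such line via Proposition~\ref{supp}, and finish with the group structure of $\E(\Psi_{d+1})$. The only real difference is cosmetic: you prove the absolute-continuity input directly from the slope-$q_n$ piecewise linearity of $\psi_{q_n}$ and conclude with Steinhaus, whereas the paper cites \cite{LePaVo96} for the absolute continuity of the one-dimensional projections and uses that a closed uncountable subgroup of $\R$ must be all of $\R$.
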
 \begin{proof} \ Denote
$S_{i}(x)=\rho_{\beta_i}(x)=x+\beta_{i} \ {\rm mod} \ 1$. Suppose
that ${\{q_{n_k}\beta_{i}}\}\rightarrow c_{i}$, with $c_{i}\in[0,1)$
for $i=1,\ldots, d$, and consider the measures
$$ \nu_{k}:=((\psi \times \psi\circ S_{1} \times \ldots \times
\psi \circ S_{d})_{q_{n_{k}}})_{\ast}(\mu),\;\;k\geq1.$$

Since $$\forall x,y \in [0,1), \quad |\psi_{\qnk}(x)-\psi_{\qnk}
(y)|<2 \, V(\psi) =2$$ and $\int\psi\, d\mu=0$, we have that ${\rm
Im} (\psi \times\psi\circ S_{1}\times\ldots\times \psi\circ
S_{d})_{\qnk} \subset [-2, 2]^{d+1}$, so that $\nu_k$ is
concentrated on $[-2, 2]^{d+1}$.

It follows that we can select a subsequence of $(\nu_{k})$ (still
denoted $(\nu_{k})$) which converges to a probability measure $\nu$
concentrated on $[-2,2]^{d+1}$. We will show in what kind of a
subset of $\R^{d+1}$ the support of $\nu$ is included. Consider the
image of the measure $\nu_{k}$ via
$$F:\R^{d+1}\rightarrow \R^{d}, \ \ \ F(x_{0}, \ldots, x_{d})= (x_{1}-x_{0},
\ldots,x_{d}-x_{0}).$$ In view of (\ref{p12}), we obtain
$$F\circ (\psi\times\psi\circ S_{1}\times\ldots\times \psi \circ
S_{d})_{q_{n_{k}}}(x)=(\{\qnk\beta_{1}\}+M_{1}(x), \ldots,
\{\qnk\beta_{d}\}+M_{d}(x) )$$ with $M_i(x)\in\Z$, whence
$F_{\ast}\nu_{k}$ is the measure concentrated on $(\{q_{n_k}
\beta_1\},...,\{q_{n_k}\beta_{d}\})+\Z^{d}$.

Since $\nu_{k}\rightarrow \nu$ weakly, $F_{\ast}\nu_{k}\rightarrow
F_{\ast}\nu$ (because all these measures are concentrated on a
bounded subset of $\R^{d+1}$). As ${\{q_{n_k}\beta_{i}}\}\rightarrow
c_{i}$, it follows that
$${\rm supp}\,\nu \subset \{(x_{0},\ldots,x_{d})\in\R^{d+1}:\;
x_{i}-x_{0}=c_{i}+k_{i},\; k_i\in\Z,\; i=1,\ldots,d\}.$$

The set on the right hand side of this inclusion is equal to the
union of sets of the form $\{(x, x-(c_1 + k_1),...,x-(c_d+k_d):
x\in\R\}$, hence of countably many lines parallel to the diagonal
$\Delta_{d+1}$. Moreover, the support of $\nu$ is uncountable
(because one dimensional projections of $\nu $ are absolutely
continuous measures - see \cite{LePaVo96}), whence it must be
uncountable on one of these lines. In view of
Proposition~\ref{supp}, ${\rm supp}\,\nu\subset \E(\Psi_{d+1})$ and
since $\E(\Psi_{d+1})$ is a group, we have ${\rm supp}\,\nu-{\rm
supp}\,\nu \subset \E(\Psi_{d+1})$. However, the set
$\Delta_{d+1}\cap ({\rm supp}\,\nu-{\rm supp}\,\nu)$ is uncountable,
so because $\E(\Psi_{d+1})$ is closed, we must have
$\Delta_{d+1}\subset \E(\Psi_{d+1})$ and the proof is complete.
\end{proof}

\vskip 3mm
\begin{Cor}\label{dense} $(\psi, \psi\circ S_{1}, \ldots,
\psi \circ S_{d})$ is ergodic whenever the set of accumulation
points of $(\{q_{n}\beta_{1}\}, \ldots, \{\qn\beta_{d}\})$ is dense
in $\tor^{d}$.
\end{Cor}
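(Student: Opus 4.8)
The plan is to reduce the assertion to the ergodicity of a $d$-dimensional \emph{rational} step cocycle and then to combine two independent pieces of information about its group of essential values. Since $\Psi_{d+1}$ is ergodic exactly when $\E(\Psi_{d+1})=\R^{d+1}$, and Proposition~\ref{reduc1} gives $\Delta_{d+1}\subset\E(\Psi_{d+1})$, I would first pass to the quotient modulo the diagonal. Writing $F(x_0,\dots,x_d)=(x_1-x_0,\dots,x_d-x_0)$, which identifies $\R^{d+1}/\Delta_{d+1}$ with $\R^d$, Lemma~\ref{quotient} yields $\E(F\Psi_{d+1})=\E(\Psi_{d+1})/\Delta_{d+1}$, so the claim is equivalent to $\E(\Phi)=\R^d$ for $\Phi:=F\Psi_{d+1}$. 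Its $j$-th coordinate is $\psi(\cdot+\beta_j)-\psi(\cdot)=\{\cdot+\beta_j\}-\{\cdot\}=1_{[0,1-\beta_j)}-(1-\beta_j)$, so $\Phi$ is a rational step cocycle with $\beta_j(\Phi)=1-\beta_j$; the density hypothesis on the accumulation points of $(\{q_n\beta_j\})_j$ transfers verbatim to $(\{q_n\beta_j(\Phi)\})_j=(\{-q_n\beta_j\})_j$ through the homeomorphism $\vec x\mapsto-\vec x$ of $\T^d$.

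The first fact I would establish is that $\pi(\E(\Phi))$ is dense in $\T^d=\R^d/\Z^d$, where $\pi$ is the canonical projection. To see this I would, as in the proof of Proposition~\ref{reduc1}, fix an accumulation point and a subsequence $(\qnk)$ along which $(\{\qnk\beta_j(\Phi)\})_j$ converges; by Denjoy--Koksma the distributions of $\Phi_{\qnk}$ are supported in a fixed bounded subset of $(\{\qnk\beta_j(\Phi)\})_j+\Z^d$, so a subsequence converges to a probability measure whose (nonempty) support lies in $\E(\Phi)$ by Proposition~\ref{supp} and projects under $\pi$ onto the chosen accumulation point. As these are dense, $\pi(\E(\Phi))$ is dense.

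The second fact is that $\E(\Phi)$ contains a lattice commensurable with $\Z^d$. Applying Theorem~\ref{ratReduc} to the rational cocycle $\Phi$ yields a rational change of basis $M$ and an integer $d(\Phi)$ with $\E(M\Phi)$ containing the group generated by $e_1,\dots,e_{d(\Phi)}$ and $\|q_n\beta_j(\hat\Phi)\|\to0$ for $j>d(\Phi)$. Here I would argue that density forces $d(\Phi)=d$: otherwise some $\beta_j(\hat\Phi)$ is, by Lemma~\ref{rat3}, a nontrivial rational combination $\sum_i a_i\beta_i(\Phi)$ (modulo a rational constant), and $\|q_n\beta_j(\hat\Phi)\|\to0$ would confine the accumulation points of $(\{q_n\beta_i(\Phi)\})_i$ to a proper closed subset of $\T^d$, namely finitely many translates of the kernel of a surjection $\T^d\to\T$, contradicting density. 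Thus $d(\Phi)=d$, so $\E(M\Phi)\supset\Z^d$, and by Lemma~\ref{hom} $\E(\Phi)=M^{-1}\E(M\Phi)\supset M^{-1}\Z^d$, a lattice commensurable with $\Z^d$ because $M$ is rational.

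To finish, I would set $\Lambda:=M^{-1}\Z^d\cap\Z^d$, a finite-index sublattice of $\Z^d$ contained in $\E(\Phi)$. Then $\E(\Phi)/\Lambda$ is a closed subgroup of the torus $\R^d/\Lambda$, and its image under the finite covering $\R^d/\Lambda\to\R^d/\Z^d$ equals $\pi(\E(\Phi))$, which is closed (the covering is proper) and dense, hence all of $\R^d/\Z^d$. Since this covering is a local homeomorphism, $\E(\Phi)/\Lambda$ has dimension $d$, so its identity component is the whole connected torus $\R^d/\Lambda$; therefore $\E(\Phi)=\R^d$ and $\Psi_{d+1}$ is ergodic. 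I expect the delicate point to be the claim $d(\Phi)=d$: one must track how the rational matrix $M$ and the clearing of denominators transform the relevant combinations of the $\beta_j$, and verify that density genuinely excludes every residual relation of the form $\|q_n\beta'\|\to0$. The concluding torus argument and the density of $\pi(\E(\Phi))$ are then routine, the commensurability of $M^{-1}\Z^d$ with $\Z^d$ being exactly what rules out the cocompact-but-proper subgroups that would otherwise obstruct the conclusion.
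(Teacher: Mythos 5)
Your proposal is correct, but it takes a genuinely different route from the paper's. The paper's own proof never leaves $\R^{d+1}$ and uses no rational-cocycle machinery: it simply re-runs the limit-measure argument from the proof of Proposition~\ref{reduc1} to observe that \emph{every} accumulation point $(c_1,\dots,c_d)$ contributes an entire line $\{(x,\,x-(c_1+k_1),\dots,x-(c_d+k_d)):x\in\R\}$, parallel to the diagonal, inside $\E(\Psi_{d+1})$, and then concludes from density of the accumulation points and closedness of the subgroup $\E(\Psi_{d+1})$ that this subgroup is all of $\R^{d+1}$. (Implicit there is the elementary fact that the set of accumulation points of a sequence is closed, so ``dense'' really means ``all of $\T^d$''; this is what excludes a proper closed subgroup with dense projection, such as an irrationally winding line.) You instead pass to the quotient cocycle $\Phi_d$ via Lemma~\ref{quotient}, retain from the limit-measure argument only the \emph{density} of $\pi(\E(\Phi_d))$ in $\T^d$, and then close the gap between ``dense'' and ``everything'' with a second, independent ingredient: Theorem~\ref{ratReduc} applied to the rational cocycle $\Phi_d$, together with your observation that the density hypothesis forces $d(\Phi)=d$ --- any surviving relation $\|q_n\beta_j(\hat\Phi)\|\to0$ would, after expressing $\beta_j(\hat\Phi)$ through Lemma~\ref{rat3} and clearing denominators, confine all accumulation points of $(\{q_n\beta_i\})_i$ to the kernel of a nontrivial character of $\T^d$. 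This yields a full-rank lattice inside $\E(\Phi_d)$, which makes $\pi(\E(\Phi_d))$ compact, so dense-plus-closed finishes the argument. Your route is longer and leans on the Section~3 machinery, but it is self-contained precisely where the paper is terse: you need neither the closedness of accumulation sets nor any Baire-type argument, because the lattice kills the irrational-line obstruction outright; and the step you rightly flagged as delicate, $d(\Phi)=d$, is handled correctly, including the rational constants introduced by Lemma~\ref{rat3} and the sign flip coming from $\beta_j(\Phi_d)=1-\beta_j$.
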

\begin{proof}
 From the proof of Proposition~\ref{reduc1} it follows that with
every accumulation point $ (c_{1}, \ldots, c_{d})$ of
$(\{q_{n}\beta_{1}\}, \ldots, \{\qn\beta_{d}\})$ we obtain a line
$\{(x, x-(c_1+k_1),...,x-(c_d+k_d): x\in\R\}$ (and the smallest
subgroup in which the line is included) which is included in the
group of essential values. Since the set of accumulation points is
dense and $\E(\Psi_{d+1})$ is closed, it follows that the only
possibility is that $\E(\Psi_{d+1})=\R^{d+1}$ which is equivalent to
the fact that $\Psi_{d+1}$ is ergodic.\end{proof}

\vskip 3mm By Lemma~\ref{quotient} the study of $\Psi_{d+1}$ reduces
to that of the quotient cocycle $\Psi_{d+1} +\Delta_{d+1}:
\T\to\R^{d+1}/\Delta_{d+1}$. Using the epimorphism $\R^{d+1}\ni(y_0,
..., y_d) \to (y_1 -y_0, ..., y_d - y_0)\in \R^d$ (whose kernel is
equal to $\Delta_{d+1}$), the quotient is given by the cocycle
$$\Phi_d(x) =(1_{[0, 1- \beta_j)} \, - 1 + \beta_j)_{j=1,...,d}.$$

\vskip 3mm
\subsection{Small values of $d = 1, 2, 3$ (and $\Psi_{d+1}$)}

1) $d=1,\Psi_{2} = (\psi(x), \psi(x+\beta))$

Applying Theorem \ref{reduc1} and Lemma~\ref{quotient} we can reduce
the cocycle $\Psi_{2}$ to the quotient cocycle
$\left(\Psi_{2}+\Delta_{2}\right)(x)=1_{[0, 1- \beta)} - 1 + \beta$.
We conclude using Theorem~\ref{case1} that $\Psi_{2}$ is regular
over any irrational rotation $T$.

\vskip 3mm 2) $d=2,\Psi_{3}=(\psi(x), \psi(x+\beta),
\psi({x}+\gamma))$

As above we reduce the cocycle $\Psi_{3}$ to the quotient cocycle
$\left(\Psi_{3}+\Delta_{3}\right)(x)=(1_{[0, 1- \beta)} - 1 +
\beta,1_{[0, 1- \gamma)} - 1 + \gamma)$. Recall that we have seen in
subsection \ref{123} that for $\alpha$ with bounded partial
quotients $\Psi_{3}+\Delta_{3}$ is regular and therefore the affine
cocycle is also regular when $\alpha$ has bounded partial quotients.

\vskip 3mm 3) $d=3,\Psi_{4}=(\psi(x), \psi(x+\beta),
\psi({x}+\gamma), \psi(x+\delta))$

\begin{Th} There are values of the parameters $(\beta, \gamma, \delta)$ for which the
cocycle is non regular.
\end{Th}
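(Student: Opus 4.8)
The plan is to reduce the regularity of $\Psi_4$ to that of the three–dimensional step cocycle $\Phi_3$ already analysed in Corollary~\ref{examplenonreg2}, and then to transport the non-regular example produced there back to the affine setting. All the genuinely hard analytic work, namely the construction of a non-regular $\Phi_3$, has been carried out in Theorem~\ref{examplenonreg} and Corollary~\ref{examplenonreg2}; here the task is purely one of reduction and bookkeeping.

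First I would apply Proposition~\ref{reduc1} with $d=3$ to obtain $\Delta_4\subset\E(\Psi_4)$. Since the diagonal is contained in the group of essential values, the final assertion of Lemma~\ref{quotient} shows that $\Psi_4$ is regular if and only if the quotient cocycle $\Psi_4+\Delta_4:\T\to\R^4/\Delta_4$ is regular. Next I would make this quotient explicit through the epimorphism $M(y_0,y_1,y_2,y_3)=(y_1-y_0,y_2-y_0,y_3-y_0)$, whose kernel is $\Delta_4$; its $j$-th component applied to $\Psi_4$ is $\psi(\cdot+\beta_j)-\psi(\cdot)$, and a direct computation (splitting according to whether $x+\beta_j<1$ or not) gives the exact identity $\psi(x+\beta_j)-\psi(x)=1_{[0,1-\beta_j)}(x)-1+\beta_j$. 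Hence, under the isomorphism $\R^4/\Delta_4\cong\R^3$ induced by $M$, the quotient cocycle equals
$$M\Psi_4=\bigl(1_{[0,1-\beta)}-(1-\beta),\,1_{[0,1-\gamma)}-(1-\gamma),\,1_{[0,1-\delta)}-(1-\delta)\bigr),$$
which, writing $\beta'=1-\beta$, $\gamma'=1-\gamma$, $\delta'=1-\delta$, is exactly the cocycle $\Phi_3=(1_{[0,\beta')}-\beta',1_{[0,\gamma')}-\gamma',1_{[0,\delta')}-\delta')$ of Corollary~\ref{examplenonreg2}.

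Finally, Corollary~\ref{examplenonreg2} furnishes a triple $(\beta',\gamma',\delta')$ with $0<\beta'<\gamma'<\delta'<1$ and $\delta'=\beta'+\gamma'$, where $\beta'$ satisfies~(\ref{betaExpan}) and $\gamma'$ is generic, for which $\Phi_3$ is non-regular. Putting $\beta=1-\beta'$, $\gamma=1-\gamma'$, $\delta=1-\delta'$ yields a pairwise-distinct triple in $(0,1)$, hence admissible parameters for $\Psi_4$, for which the quotient cocycle $\Psi_4+\Delta_4$ coincides (after the isomorphism above) with this non-regular $\Phi_3$. Since regularity is invariant under the group isomorphism $M$ (Lemma~\ref{nonregIm} applied to $M$ and to $M^{-1}$), the quotient is non-regular, and the equivalence in Lemma~\ref{quotient} then forces $\Psi_4$ itself to be non-regular for this choice of $(\beta,\gamma,\delta)$.

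The proof involves no serious obstacle beyond what has already been established: the one point deserving care is the parameter dictionary $\beta_j\leftrightarrow 1-\beta_j$ relating the shifts of the affine cocycle to the interval lengths of its step-cocycle quotient, together with the verification that the admissibility constraint $\beta'+\gamma'<1$ translates into a genuinely admissible and pairwise-distinct triple $(\beta,\gamma,\delta)$ for $\Psi_4$.
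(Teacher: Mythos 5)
Your proposal is correct and follows essentially the same route as the paper: the paper's proof is exactly the reduction by $\Delta_4$ (Proposition~\ref{reduc1} together with Lemma~\ref{quotient} and the epimorphism $(y_0,y_1,y_2,y_3)\mapsto(y_1-y_0,y_2-y_0,y_3-y_0)$ computed in Section~4.1) followed by an appeal to Corollary~\ref{examplenonreg2}. You merely spell out the bookkeeping (the dictionary $\beta_j\leftrightarrow 1-\beta_j$ and the admissibility of the resulting triple) that the paper leaves implicit.
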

\begin{proof} After reduction by $\Delta_4$, the result follows from
Corollary \ref{examplenonreg2}.
\end{proof}

\subsection{Ergodicity is generic}\label{secgeneric}
We consider, as before, the cocycle $\psi(x)=\{x\}-\frac12$ and let
$S_{\beta}(x)=x+\beta$ be the rotation by $\beta \in [0,1)$ on $\T$.

\begin{Prop} The set $\{(\beta_{1}, \ldots, \beta_{d})\in\T^{d}:\: (\psi,
\psi\circ S_{\beta_1}, \ldots, \psi\circ S_{\beta_d}))\;\mbox{is
ergodic}\}$ is residual.
\end{Prop}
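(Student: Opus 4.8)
The plan is to deduce the result from the ergodicity criterion of Corollary~\ref{dense} together with a Baire category argument on the parameter space $\T^d$. By Corollary~\ref{dense}, the cocycle $(\psi, \psi\circ S_{\beta_1}, \ldots, \psi\circ S_{\beta_d})$ is ergodic as soon as the set of accumulation points of the sequence $\big((\{q_n\beta_1\}, \ldots, \{q_n\beta_d\})\big)_{n\ge1}$ is dense in $\T^d$. Hence it suffices to prove that the set
$$G = \{\beta=(\beta_1,\ldots,\beta_d)\in\T^d:\ \text{the accumulation points of } (\{q_n\beta_1\},\ldots,\{q_n\beta_d\})_n \text{ are dense in }\T^d\}$$
is residual, since it is then contained in the set of ergodic parameters.

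First I would observe that for each fixed $n$ the map $\Theta_n:\T^d\to\T^d$ sending $\beta$ to $(q_n\beta_1,\ldots,q_n\beta_d)\bmod 1$ is a continuous group endomorphism, namely coordinatewise multiplication by the integer $q_n$ on the circle $\R/\Z$ (note that, viewed as a point of $\T$, the coordinate $\{q_n\beta_j\}$ depends continuously on $\beta_j$, unlike its lift to $[0,1)$). Fix a countable dense set $\{z_m:\ m\ge1\}\subset\T^d$, and for $m,N\ge1$ and $\varepsilon\in\Q_{>0}$ set
$$E_{m,\varepsilon,N} = \bigcup_{n\ge N} \Theta_n^{-1}\big(B(z_m,\varepsilon)\big),$$
where $B(z_m,\varepsilon)$ is the open $\varepsilon$-ball around $z_m$ for the product metric on $\T^d$. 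Each $E_{m,\varepsilon,N}$ is open by continuity of the $\Theta_n$, and a diagonal argument gives $G=\bigcap_{m}\bigcap_{\varepsilon\in\Q_{>0}}\bigcap_{N} E_{m,\varepsilon,N}$: belonging to this intersection means that every $z_m$ is approached by $\Theta_n(\beta)$ for arbitrarily large $n$, hence is an accumulation point of $(\Theta_n(\beta))_n$, and density of $\{z_m\}$ then yields density of the accumulation points.

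The key step is to show that each $E_{m,\varepsilon,N}$ is dense, so that Baire's theorem applies on the compact (hence complete) metric space $\T^d$. Here I would use that $q_n\to\infty$: given any non-empty open $V\subset\T^d$, choose a cube of side $\eta>0$ contained in $V$; since multiplication by the integer $q_n$ on each circle factor expands lengths by $q_n$, as soon as $q_n\eta\ge1$ the image $\Theta_n(V)$ is all of $\T^d$, and in particular meets $B(z_m,\varepsilon)$. Because $q_n\to\infty$ one may pick such an $n$ with $n\ge N$, producing a point of $V$ lying in $E_{m,\varepsilon,N}$; thus $E_{m,\varepsilon,N}$ is open and dense. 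Consequently $G$ is a dense $G_\delta$, hence residual, and by Corollary~\ref{dense} every $\beta\in G$ yields an ergodic cocycle, which proves the claim.

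The main obstacle is precisely the density of the sets $E_{m,\varepsilon,N}$: everything rests on the expanding and (eventual) surjectivity property of the multiplication-by-$q_n$ maps combined with $q_n\to\infty$, which is what forces the orbit $(\Theta_n(\beta))_n$ to equidistribute its accumulation points for a generic $\beta$, even though the relevant sequence of denominators $q_n$ is fixed by $\alpha$ and the dependence on $\beta$ is through all of these simultaneously.
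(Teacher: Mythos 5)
Your proof is correct and follows essentially the same route as the paper: both reduce the statement to Corollary~\ref{dense} and then show, by Baire category, that the set of parameters for which the accumulation points of $(\{q_n\beta_1\},\ldots,\{q_n\beta_d\})_{n\geq1}$ are dense contains a dense $G_\delta$, your sets $E_{m,\varepsilon,N}=\bigcup_{n\geq N}\Theta_n^{-1}\bigl(B(z_m,\varepsilon)\bigr)$ being exactly the paper's open sets $\widetilde{A}_{N}(c_{1},\ldots,c_{d},\varepsilon)$ with the countable dense set of targets taken rational. The only difference is that you justify explicitly (via surjectivity of coordinatewise multiplication by $q_n$ on any cube of side $\eta$ once $q_n\eta\geq1$) the density of these open sets, a step the paper dismisses as clear.
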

\begin{proof}
Using Corollary~\ref{dense}, we only need to show that the set of
$(\beta_{1}, \ldots,\beta_{d})$ for which the set of accumulation
points of $(\{\qn\beta_{1}\}, \ldots, \{\qn\beta_{d}\})_{n\geq1}$ is
dense in $\T^d$, is residual ({\em i.e.} it includes a dense
$G_{\delta}$ subset).

We take $\varepsilon>0$ , $c_{1}, \ldots, c_{d}\in[0,1)$ and
consider the sets $\widetilde{A}_{N} =
\widetilde{A}_{N}(c_{1},\ldots, c_{d}, \varepsilon) :=
\bigcup_{n=N}^{\infty}A_{n}(c_{1}, \ldots, c_{d}, \varepsilon)$,
where
\begin{eqnarray*}
A_{n} &=& A_{n}(c_{1}, \ldots, c_{d}, \varepsilon) :=\{(\beta_{1},
\ldots, \beta_{d})\in\tor^{d}:\: \|\qn\beta_{1}-c_{1}\|<\varepsilon,
\ldots, \|\qn\beta_{k}-c_{d}\|<\varepsilon\}.
\end{eqnarray*}
Clearly $\widetilde{A}_{N}$ is open and also dense. Fix $0 <
\varepsilon_\ell \to 0$. Then the set
$$\bigcap_{\ell\geq1}\bigcap_{N=1}^{\infty}
\widetilde{A}_{N}(c_{1}, \ldots, c_{d}, \varepsilon_\ell)$$ is a
dense $G_\delta$. Moreover this set equals
$$\{(\beta_{1}, \ldots, \beta_{d})\in\tor^{d}: \left(\exists
q_{n_{k}}\right)\;\;(\{q_{n_{k}}\beta_{1}\}, \ldots,
\{q_{n_{k}}\beta_{d}\})\to (c_{1}, \ldots, c_{d})\},$$ so the latter
set is also a dense $G_{\delta}$. Therefore the set
$$\bigcap_{(c_{1}, \ldots, c_{d}) \,\in \, \Q^{d} \, \cap[0,1)^{d}}
\bigcap_{\ell=1}^\infty\bigcap_{N=1}^{\infty}
\widetilde{A}_{N}(c_{1}, \ldots, c_{k}, \varepsilon_\ell)$$ is a
dense $G_{\delta}$ and the proof is complete.
\end{proof}

Now, we show that the multiple ergodicity problem has a positive
answer for a.a.\ choices of $(\beta_{1}, \ldots,\beta_{d})$. We will
need the following classical lemma of Rajchman.

\begin{Lemma} \label{LNT}
Let $(X,\cal B,\mu)$ be a probability space, $f_{n}:X\rightarrow\R$
such that $f_{n}\in L^{2}(X, \cal B, \mu)$, $\|f_{n}\|< C$, and
$f_{n}\bot f_{m}$ whenever $n\neq m$. Then $\frac{1}{n}
\sum_{k=1}^{n}f_{k}\rightarrow 0\ \ a.e.$
\end{Lemma}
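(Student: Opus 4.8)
The plan is to prove this as the classical Rajchman strong law for orthogonal sequences, by first extracting convergence along the subsequence of perfect squares and then dominating the oscillation inside each dyadic-type block $[m^2,(m+1)^2)$. Throughout I write $\|\cdot\|$ for the $L^2(\mu)$-norm, as in the statement, and I set $S_n=\sum_{k=1}^n f_k$. The only structural inputs are orthogonality, which gives $\|S_n\|^2=\sum_{k=1}^n\|f_k\|^2<C^2n$, together with the uniform bound $\|f_k\|<C$.

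\textbf{Step 1 (convergence along squares).} Since $\|S_{m^2}/m^2\|^2<C^2m^2/m^4=C^2/m^2$, the series $\sum_m\|S_{m^2}/m^2\|^2$ converges. By monotone convergence (Beppo Levi), $\sum_m\bigl(S_{m^2}(x)/m^2\bigr)^2<\infty$ for $\mu$-a.e.\ $x$, whence $S_{m^2}/m^2\to0$ a.e.

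\textbf{Step 2 (oscillation inside a block).} For $m\ge1$ put $D_m=\max_{m^2\le n<(m+1)^2}\bigl|S_n-S_{m^2}\bigr|$. Bounding a maximum crudely by a sum of squares,
$$D_m^2\le\sum_{n=m^2+1}^{(m+1)^2}\Bigl(\sum_{k=m^2+1}^n f_k\Bigr)^2,$$
and integrating, orthogonality yields $\int\bigl(\sum_{k=m^2+1}^n f_k\bigr)^2\,d\mu=\sum_{k=m^2+1}^n\|f_k\|^2<C^2(n-m^2)\le C^2(2m+1)$. As there are at most $2m+1$ values of $n$ in the block, $\|D_m\|^2<C^2(2m+1)^2$, so $\|D_m/m^2\|^2<C^2(2m+1)^2/m^4$ and $\sum_m\|D_m/m^2\|^2<\infty$. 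As in Step~1 this gives $D_m/m^2\to0$ a.e.

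\textbf{Step 3 (interpolation).} For $m^2\le n<(m+1)^2$ one has
$$\frac{|S_n|}{n}\le\frac{|S_{m^2}|+D_m}{m^2}=\frac{|S_{m^2}|}{m^2}+\frac{D_m}{m^2},$$
and both terms tend to $0$ a.e.\ by Steps~1 and~2; hence $S_n/n\to0$ a.e., which is the assertion. The step I expect to be the crux is the control of the oscillation $D_m$ in Step~2: one is forced to estimate a \emph{maximum} of partial sums rather than a single partial sum, and the naive ``maximum $\le$ sum of squares'' bound succeeds here precisely because orthogonality makes the $L^2$-norm of each block-partial-sum grow only linearly in the short block length $2m+1$, which is negligible against the normalisation $m^2$. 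Were the blocks of length comparable to $m^2$ one would need a genuine maximal inequality of Rademacher--Menshov type, but the square-spacing keeps the blocks short enough that this elementary estimate suffices.
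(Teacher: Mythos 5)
Your proof is correct, and it follows the same skeleton as the paper's — a.e.\ convergence along the subsequence $n=m^2$ from the summability of $\|S_{m^2}/m^2\|_2^2$, then interpolation inside the blocks $[m^2,(m+1)^2)$ — but the block-control step is genuinely different, and in fact more robust. The paper handles the in-block oscillation \emph{pointwise}: writing $L_n=[\sqrt n]$, it bounds $\frac1n|S_n|$ by $\frac{1}{L_n^2}\bigl|S_{L_n^2}\bigr|+2C\frac{L_n}{n}$, and the second term is only valid if $|f_k|\le C$ almost everywhere, i.e.\ if $\|f_k\|<C$ is read as a sup-norm bound. That reading is consistent with the paper's application (there the $f_k$ are characters, of modulus $1$), but under the $L^2$ interpretation of the hypothesis — which is the natural one given ``$f_n\in L^2$'', and the one you adopted — the paper's displayed pointwise inequality does not follow from the assumptions. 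Your Step~2 replaces it with an $L^2$ estimate of the block maximum $D_m$ via the crude bound (maximum of squares $\le$ sum of squares), where orthogonality gives $\|D_m\|_2^2\le C^2(2m+1)^2$, summable against $m^{-4}$; a second monotone-convergence argument then kills $D_m/m^2$ a.e. So your route proves the lemma exactly as stated, assuming only uniform $L^2$ bounds, at the cost of one extra estimate, while the paper's shorter argument implicitly requires uniform boundedness of the $f_k$ — which happens to hold in the only place the lemma is used.
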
 \begin{proof}
It follows from the assumptions that $\sum_{N=1}^\infty \|{1 \over
N^2} \sum_{k=1}^{N^2} f_k\|_2^2 \leq \sum_{N=1}^\infty {C^2 \over
N^2} < +\infty$; hence, $\lim_N \frac{1}{N^2}\sum_{k=1}^{N^2}f_{k} =
0$ a.e.

For $n \geq 1$, let $L_n := [\sqrt{n}]$. We have $L_n^2 \leq n <
(L_{n}+1)^2$ and
$$|\frac{1}{n}\sum_{k=1}^{n}f_{k}| \leq {1 \over L_n^2}
|\sum_{k=1}^{L_n^2} f_k| + 2C {L_n \over n} \underset {n \to \infty}
\longrightarrow 0, \text{a.e.}$$
\end{proof}

\begin{Prop} For every irrational rotation $Tx=x+\alpha$ on $\T$, we
have
$$\mu^{\otimes d}\{(\beta_{1}, \ldots, \beta_{d})\in\tor^{d}
:\: (\psi, \psi\circ S_{\beta_1}, \ldots, \psi\circ S_{\beta_d}))\;
\text{is}\;\; T\text{-ergodic}\}=1.$$
\end{Prop}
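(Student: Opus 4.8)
The plan is to deduce the result from the ergodicity criterion of Corollary~\ref{dense} together with Rajchman's Lemma~\ref{LNT}. By Corollary~\ref{dense}, the cocycle $(\psi,\psi\circ S_{\beta_1},\ldots,\psi\circ S_{\beta_d})$ is ergodic as soon as the set of accumulation points of $(\{q_n\beta_1\},\ldots,\{q_n\beta_d\})_{n\ge1}$ is dense in $\T^d$. So it suffices to show that this density holds for $\mu^{\otimes d}$-a.e.\ choice of parameters $(\beta_1,\ldots,\beta_d)$. I would in fact prove the stronger statement that for a.e.\ parameter the sequence $(\{q_n\beta_1\},\ldots,\{q_n\beta_d\})_{n}$ is equidistributed in $\T^d$, and verify this via Weyl's criterion applied on the parameter space $(\T^d,\mu^{\otimes d})$.

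Fix a non-zero vector $(m_1,\ldots,m_d)\in\Z^d$ and, regarding $(\beta_1,\ldots,\beta_d)$ as the variable, set $f_n(\beta_1,\ldots,\beta_d)=e^{2\pi i\sum_j m_jq_n\beta_j}$. These are characters of $\T^d$, so $|f_n|\equiv1$ and $\|f_n\|_2=1$. The key computation is the orthogonality of this family: for $n\ne m$, Fubini together with $\int_\T e^{2\pi ik\beta}\,d\beta=0$ for $k\ne0$ gives $\langle f_n,f_m\rangle=\prod_j\int_\T e^{2\pi i m_j(q_n-q_m)\beta_j}\,d\beta_j$. Since the denominators $(q_n)$ are strictly increasing we have $q_n-q_m\ne0$, while $m_{j_0}\ne0$ for some $j_0$; hence the $j_0$-th factor vanishes and $\langle f_n,f_m\rangle=0$. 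Thus $(f_n)$ is an orthonormal sequence in $L^2(\T^d)$ with uniformly bounded norms. Lemma~\ref{LNT}, whose proof applies verbatim to complex orthonormal sequences (or may be applied to the real and imaginary parts, which are themselves bounded orthogonal sequences), then yields $\frac1N\sum_{n=1}^Nf_n\to0$ $\mu^{\otimes d}$-a.e.

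Let $E_m$ be the full-measure set on which these Ces\`aro averages converge to $0$. Since $\Z^d\setminus\{0\}$ is countable, the intersection $E=\bigcap_{m\ne0}E_m$ still has full measure. For each $(\beta_1,\ldots,\beta_d)\in E$ the exponential sums tend to $0$ for every non-zero integer frequency, so by Weyl's criterion the orbit $(\{q_n\beta_1\},\ldots,\{q_n\beta_d\})$ is equidistributed, hence dense, in $\T^d$; Corollary~\ref{dense} then gives ergodicity for every parameter in $E$, which is exactly the assertion. The only genuinely substantive point is the orthogonality of the $f_n$, and this rests entirely on the strict monotonicity of the sequence $(q_n)$; the remainder is the routine Weyl--Rajchman machinery. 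I do not foresee any serious obstacle beyond invoking the complex-valued version of Lemma~\ref{LNT} correctly.
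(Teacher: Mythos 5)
Your proposal is correct and follows essentially the same route as the paper: reduce to a.e.\ density of accumulation points via Corollary~\ref{dense}, strengthen to a.e.\ equidistribution of $(\{q_n\beta_1\},\ldots,\{q_n\beta_d\})$ via Weyl's criterion, and verify the criterion by applying Rajchman's Lemma~\ref{LNT} to the characters $f_n(\beta_1,\ldots,\beta_d)=\exp\bigl(2\pi i\sum_j m_j q_n\beta_j\bigr)$. The only difference is that you spell out details the paper leaves implicit (the orthogonality computation from strict monotonicity of $(q_n)_{n\ge1}$, the countable intersection over frequencies, and the complex-valued use of Lemma~\ref{LNT}), all of which are handled correctly.
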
 \begin{proof}
By Corollary ~\ref{dense}, all we need to show is that the set of
$(\beta_{1}, \ldots,\beta_{d})$ for which the set of accumulation
points of $(\{\qn\beta_{1}\}, \ldots, \{q_n\beta_{d}\})_{n\geq1}$ is
dense in $\tor^{d}$, is a set of full measure. We will show more:
the set of such $d$-tuples for which $(\{\qn\beta_{1}\}, \ldots,
\{q_n\beta_{d}\})_{n\geq1}$ is uniformly distributed (mod~$1$) in
$\tor^{d}$ is of full measure.

For almost all $(\beta_{1}, \ldots, \beta_{d})$, the sequence
$(q_n\beta_{1}, \ldots, q_n\beta_{d})_{n \geq 1}$ is uniformly
distributed (mod~$1$). Indeed, by Weyl's criterium of
equidistribution (see e.g.\ \cite{KuNi}) it suffices to show that
for almost all $(\beta_{1}, \ldots, \beta_{d}) \text{ in }
\tor^{d}$, for any nontrivial character $\chi$ of $\tor^{d}$, the
Cesaro averages of the sequence $(\chi (q_n\beta_{1}, \ldots,
q_n\beta_{d}))_{n \geq 1}$ tend to zero.

We have $\chi (q_n\beta_{1}, \ldots, q_n\beta_{d}) =\exp(2\pi i(s_1
q_n\beta_{1}+ \ldots+ s_{d} q_n\beta_{d}))$ for integers
$s_{1},\ldots,s_{d}$. To conclude, we apply Lemma \ref{LNT} to
$f_{n}(x_{1}, \ldots, x_{d}):=\exp(2\pi i(\qn s_{1} x_{1} +
\ldots+\qn s_{d}x_{d}))$.
\end{proof}

\vskip 3mm The authors are grateful to M. Lema\'nczyk and E. Lesigne
for their valuable suggestions. They thank the referee for his
careful reading and his helpful remarks.

Jean-Pierre Conze \\
IRMAR, CNRS UMR 6625, University of Rennes I,\\
Campus de Beaulieu, 35042 Rennes Cedex, France\\
conze@univ-rennes1.fr

Agata Pi\c{e}kniewska\\
Faculty of Mathematics and Computer Science,\\
Nicolaus Copernicus University,\\ ul. Chopina 12/18, 87-100 Toru\'n, Poland\\
a.piekniewska@gmail.com

\end{document}